\newtheorem{theorem}{Theorem}
\newtheorem{proposition}[theorem]{Proposition}
\newtheorem{lemma}[theorem]{Lemma}
\newtheorem{corollary}[theorem]{Corollary}
\theoremstyle{definition}
\newtheorem{definition}[theorem]{Definition}
\newtheorem{problem}[theorem]{Problem}
\newtheorem*{acknowledgements}{Acknowledgements}
\numberwithin{equation}{section}
\newcommand\Forb{\operatorname{Forb}}
\newcommand\MT{\mathcal{G}_\mathrm{MT}}
\newcommand\SplitMT{\mathcal{G}_\mathrm{SplitMT}}
\newcommand\bard{\overline d}
\newcommand\ourcomment[1]{ \textbf{[#1]} }
\newcommand\oc\ourcomment
\begin{document}

\title{Mock Threshold Graphs}
\author{Richard Behr, Vaidy Sivaraman, and Thomas Zaslavsky}

\address{Department of Mathematical Sciences, Binghamton University.}
\email{\{behr|vaidy|zaslav\}@math.binghamton.edu}

\begin{abstract}
Mock threshold graphs are a simple generalization of threshold graphs that, like threshold graphs, are perfect graphs. Our main theorem is a characterization of mock threshold graphs by forbidden induced subgraphs.  Other theorems characterize mock threshold graphs that are claw-free and that are line graphs.  We also discuss relations with chordality and well-quasi-ordering as well as algorithmic aspects. 
\end{abstract}

\keywords{mock threshold graph, perfect graph, forbidden induced subgraph, claw-free graph, line graph, chordal graph}

\subjclass[2010]{Primary 05C75; Secondary 05C07, 05C17}
%05C99 None of the above, but in graph theory
%05C75 Structural characterization of families of graphs 
%05C07 Vertex degrees 
%05C10 Planar graphs; geometric and topological aspects of graph theory
%05C76 Graph operations (line graphs, products, etc.)

\date{December 24, 2016}

\maketitle

\tableofcontents

%%%%%%%%%%%%%%%%%%%%%%%%
%%%%%%%%%%%%%%%%%%%%%%%%
\section{Introduction}\label{intro}

We define, and study the surprisingly many properties of, a new class of graphs: a simple generalization of threshold graphs that we call ``mock threshold graphs''. 
One reason to study mock threshold graphs is that, like threshold graphs, they are perfect.  We characterize the class of mock threshold graphs by forbidden induced subgraphs; we discuss their properties of chordality, planarity, claw-freeness, and well-quasi-ordering; and we find the line graphs that are mock threshold.  We also treat algorithmic aspects of mock threshold graphs.

A graph $G$ is said to be \emph{threshold} if there are a function $w: V(G) \to \mathbb{R}$ and a real number $t$ such that there is an edge between two distinct vertices $u$ and $v$ if and only if $w(u) + w(v) > t$ (Chv\'{a}tal--Hammer 1973).  The class of threshold graphs has been studied in great detail, mainly because of its simple structure.  There is an entire book about threshold graphs \cite{MP}. A fundamental theorem characterizes the class by forbidden induced subgraphs. 

 \begin{theorem}[Chv\'{a}tal--Hammer 1973]\label{ThresholdFISC}
 A graph is threshold if and only if it contains no induced subgraph isomorphic to $2K_2,P_4$, or $C_4$. 
\end{theorem}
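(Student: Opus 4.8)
The plan is to prove both directions, the forward one being a routine hereditariness argument and the converse an induction on the number of vertices.

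\textbf{Necessity.} First I would note that the class of threshold graphs is closed under induced subgraphs: if a weight function $w$ and threshold $t$ witness that $G$ is threshold and $H$ is an induced subgraph, then $w|_{V(H)}$ and $t$ witness that $H$ is threshold. So it suffices to check that none of $2K_2$, $P_4$, $C_4$ is threshold, and all three fall to one uniform observation. Each of them contains four vertices $a,b,c,d$ with $ab,cd$ edges and $ac,bd$ non-edges (for $C_4$ the two pairs of opposite edges/diagonals; for $P_4 = a\text{-}b\text{-}c\text{-}d$ the end edges $ab, cd$ and the non-edges $ac, bd$; for $2K_2$ the two edges and either crossing pair). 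If such a graph were threshold we would get $w(a)+w(b) > t \ge w(a)+w(c)$, hence $w(b) > w(c)$, and simultaneously $w(c)+w(d) > t \ge w(b)+w(d)$, hence $w(c) > w(b)$, a contradiction.

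\textbf{Sufficiency.} For the converse I would show by induction on $|V(G)|$ that every $\{2K_2,P_4,C_4\}$-free graph $G$ is threshold; graphs with at most one vertex are trivially threshold. For the inductive step it is enough to find a vertex $z$ that is either isolated or dominating (adjacent to all other vertices): then $G-z$ is still $\{2K_2,P_4,C_4\}$-free, so by induction it has a threshold representation $(w,t)$, and we extend $w$ to $z$ by taking $w(z)$ less than $t - \max_{x\ne z} w(x)$ when $z$ is isolated, or greater than $t - \min_{x\ne z} w(x)$ when $z$ is dominating, which gives a threshold representation of $G$.

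\textbf{Finding such a $z$.} If $G$ is disconnected, then $2K_2$-freeness forces all but one component to be a single vertex, so $G$ has an isolated vertex. So suppose $G$ is connected with $n\ge 2$ vertices and let $v$ have maximum degree; I claim $G$ then has a dominating vertex. If $v$ is dominating we are done, so assume the set $X$ of non-neighbours of $v$ (other than $v$) is nonempty; by connectedness each $x\in X$ has a neighbour, necessarily in $Y:=N(v)$. Using maximality of $\deg v$ I would first show $N(x)\subseteq Y$ for every $x\in X$: otherwise, picking $p\in N(x)\setminus Y$ and (by the degree inequality) some $q\in Y\setminus N(x)$, the set $\{v,q,p,x\}$ induces $2K_2$ or $P_4$ according as $pq\notin E$ or $pq\in E$. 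In particular $X$ is independent. Next, the neighbourhoods $\{N(x):x\in X\}$ form a chain under inclusion: if $a\in N(x_1)\setminus N(x_2)$ and $b\in N(x_2)\setminus N(x_1)$ for $x_1,x_2\in X$, then $\{x_1,a,b,x_2\}$ induces $2K_2$ or $P_4$. Let $N(x_0)$ be the smallest member of this chain, a nonempty subset of $Y$, and pick any $a\in N(x_0)$; then $a$ is adjacent to $v$ and to every vertex of $X$. Finally $a$ is adjacent to every other vertex of $Y$: if $b\in Y\setminus\{a\}$ were non-adjacent to $a$, then $\{x_0,a,v,b\}$ would induce $P_4$ (if $x_0\not\sim b$) or $C_4$ (if $x_0\sim b$). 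Hence $a$ is dominating, completing the induction.

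\textbf{Main obstacle.} The one nontrivial idea is the choice, in the connected case, to work with a vertex $v$ of maximum degree and to pin down the structure of its non-neighbourhood $X$ — independent, with its neighbourhoods nested — and then harvest a dominating vertex from the \emph{minimal} such neighbourhood. Each individual step is just a matter of exhibiting one of the three forbidden graphs on four carefully chosen vertices; the work is in ordering these deductions correctly (degree bound $\Rightarrow$ $N(x)\subseteq Y$ $\Rightarrow$ chain $\Rightarrow$ dominating vertex) so that they chain together.
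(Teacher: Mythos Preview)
Your proof is correct. However, the paper does not actually supply its own proof of this theorem: it is quoted as a classical result of Chv\'atal and Hammer (1973) and used as background, so there is no ``paper's proof'' to compare against. Your argument is a clean, self-contained rendition of the standard proof: the necessity direction via the weight inequalities is the usual one-line trick, and for sufficiency your induction via ``every $\{2K_2,P_4,C_4\}$-free graph has an isolated or dominating vertex'' is exactly the vertex-ordering characterization that the paper states (without proof) as Theorem~2. The chain-of-neighbourhoods argument you give to locate a dominating vertex in the connected case is correct and nicely organized; each four-vertex configuration you exhibit really does induce the claimed forbidden subgraph.
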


Another fundamental fact about threshold graphs is their characterization by vertex ordering.  
If $G$ is a graph and $X \subseteq V(G)$, then $G{:}X$ denotes the subgraph of $G$ induced on $X$.

\begin{theorem}
A graph $G$ is threshold if and only if $G$ has 
a vertex ordering $v_1, \ldots, v_n$ such that for every $i$ ($1 \leq i \leq n$) the degree of $v_i$ in $G{:}\{v_1, \ldots, v_i\}$ is $0$ or $i-1$. 
\end{theorem}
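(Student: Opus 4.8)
The plan is to prove both implications by induction on $n = |V(G)|$, the engine being the elementary observation that every threshold graph has an isolated or a universal vertex. Indeed, if $G$ is threshold with witnessing weight function $w$ and threshold $t$, choose $u$ of minimum weight and $v$ of maximum weight: if $w(u) + w(v) \le t$ then $w(u) + w(x) \le t$ for every $x$, so $u$ is isolated, and if $w(u) + w(v) > t$ then $w(v) + w(x) > t$ for every $x \ne v$, so $v$ is universal.

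For the forward direction I would use this as follows. Since the restriction of $w$ to any vertex subset still witnesses that the induced subgraph is threshold, $G - v_n$ is threshold whenever $v_n$ is the isolated or universal vertex just produced; by induction $G - v_n$ has a good ordering $v_1, \dots, v_{n-1}$, and appending $v_n$ finishes the job, because $G{:}\{v_1, \dots, v_i\} = G - \{v_{i+1}, \dots, v_n\}$ for each $i$, so the degree of $v_i$ in $G{:}\{v_1, \dots, v_i\}$ is unchanged by the deletions that came after it in the construction; in particular the degree of $v_n$ in $G$ is $0$ or $n-1$ as required.

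For the converse, given such an ordering $v_1, \dots, v_n$, I would set $G_i = G{:}\{v_1, \dots, v_i\}$ and note that $G_i$ is $G_{i-1}$ with $v_i$ added either as an isolated vertex (degree $0$ in $G_i$) or as a universal vertex (degree $i-1$ in $G_i$). Hence it suffices to show that adding an isolated or a universal vertex to a threshold graph keeps it threshold, and then to induct from the one-vertex graph $G_1$. To prove that step, take $H$ on vertex set $S$ threshold with witnesses $w, t$, let $M = \max_{x \in S} |w(x)|$, and extend $w$ to the new vertex $v$ by $w(v) = t - M - 1$ in the isolated case and $w(v) = t + M + 1$ in the universal case, keeping $t$ fixed; then $w(v) + w(x) \le t - 1 < t$ (respectively $\ge t + 1 > t$) for every $x \in S$, so $v$ has exactly the desired adjacencies while adjacencies inside $S$ are untouched.

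I do not expect a genuine obstacle here. The one place that calls for a little care is the weight extension in the converse: $w(v)$ must be pushed far enough past $t$, relative to the spread of the existing weights, to force $v$ to be isolated or universal, and this is precisely the role of the $\pm(M+1)$. I would also remark in passing that the same argument records the familiar description of threshold graphs as exactly the graphs obtainable from $K_1$ by repeatedly adding isolated or universal vertices, a fact that will be convenient later.
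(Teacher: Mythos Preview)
Your proof is correct. The paper, however, does not supply its own proof of this theorem: it is stated as a classical characterization of threshold graphs (attributed to Chv\'{a}tal--Hammer) and used as background for the definition of mock threshold graphs, so there is no argument in the paper to compare against. Your approach---extracting an isolated or universal vertex via the extremal-weight argument, inducting, and for the converse extending the weight function by $t\pm(M+1)$---is the standard one and is exactly the construction that underlies the ``repeatedly add an isolated or dominating vertex'' description the paper invokes informally in the proof of Proposition~\ref{MTGPerfect}.
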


By relaxing this characterization slightly we get a new, bigger class of graphs.

\begin{definition}
A graph $G$ is said to be \emph{mock threshold}\footnote{With apologies to the Mock Turtle.} if there is a vertex ordering $v_1, \ldots, v_n$ such that for every $i$ ($1 \leq i \leq n$) the degree of $v_i$ in $G{:}\{v_1, \ldots, v_i\}$ is $0,1,i-2$, or $i-1$. 
We write $\MT$ for the class of mock threshold graphs.
\end{definition}

We call such an ordering an \emph{MT-ordering}. Note that a graph can have several MT-orderings. There are several easy but important consequences of the definition.

Although the class of mock threshold graphs is not closed under taking subgraphs, it is hereditary in the sense of induced subgraphs.  

\begin{proposition}\label{CLOSEDUNDERINDUCEDSUBGRAPHS}
 Every induced subgraph of a mock threshold graph is mock threshold.
\end{proposition}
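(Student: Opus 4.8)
The plan is to show that if $G$ has an MT-ordering and $X \subseteq V(G)$, then the restriction of that ordering to $X$ is an MT-ordering of $G{:}X$. So let $v_1, \ldots, v_n$ be an MT-ordering of $G$, let $X = \{v_{i_1}, \ldots, v_{i_k}\}$ with $i_1 < i_2 < \cdots < i_k$, and set $w_j = v_{i_j}$, so that $w_1, \ldots, w_k$ is the induced ordering of $X$. I need to check that for each $j$ the degree of $w_j$ in $(G{:}X){:}\{w_1, \ldots, w_j\}$ is $0$, $1$, $k'-2$, or $k'-1$ where $k' = j$.

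First I would observe that $(G{:}X){:}\{w_1, \ldots, w_j\} = G{:}\{w_1, \ldots, w_j\}$, since inducing twice on nested vertex sets is the same as inducing once on the smaller set. Next, the key inequality: the neighbors of $w_j = v_{i_j}$ among $\{w_1, \ldots, w_{j-1}\} = \{v_{i_1}, \ldots, v_{i_{j-1}}\}$ form a subset of the neighbors of $v_{i_j}$ among $\{v_1, \ldots, v_{i_j - 1}\}$, and likewise the non-neighbors among the former set are a subset of the non-neighbors among the latter. Writing $d$ for the degree of $v_{i_j}$ in $G{:}\{v_1, \ldots, v_{i_j}\}$ and $d'$ for the degree of $w_j$ in $G{:}\{w_1, \ldots, w_j\}$, and $m = i_j - 1$ (the number of predecessors of $v_{i_j}$ in the full ordering) versus $j-1$ (the number of predecessors of $w_j$ in the restricted ordering), we have both $d' \le d$ and $(j-1) - d' \le m - d$. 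Now do a short case analysis on the value $d \in \{0, 1, m-1, m\}$ guaranteed by the MT-ordering of $G$: if $d = 0$ then $d' = 0$; if $d = m$ then $d' = j-1$; if $d = 1$ then $d' \in \{0,1\}$; and if $d = m-1$ then the second inequality gives $(j-1) - d' \le 1$, so $d' \in \{j-2, j-1\}$. In every case $d' \in \{0, 1, j-2, j-1\}$, which is exactly the MT-condition for $w_j$.

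This is essentially a routine verification, so there is no serious obstacle; the only thing to get right is bookkeeping — making sure one tracks both the ``low-degree'' alternatives ($0,1$) and the ``high-degree'' alternatives ($i-2, i-1$) symmetrically, which the two inequalities above handle (the first controls how degree can drop, the second controls how co-degree can drop). I would state the two inequalities as a brief displayed observation and then present the four-case check compactly.
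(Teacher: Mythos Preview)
Your argument is correct: restricting an MT-ordering to any subset preserves the MT-condition, and your two inequalities (bounding the drop in degree and in codegree upon restriction) handle the four cases cleanly.

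As for comparison: the paper does not actually supply a proof of this proposition. It is listed among the ``easy but important consequences of the definition'' and stated without justification. Your write-up is exactly the routine verification the authors have in mind; there is no alternative approach to contrast it with.
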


Thus, as with threshold graphs, there exists a characterization by forbidden induced subgraphs, which we describe in Section \ref{forbidden}.

A graph $G$ is \emph{perfect} if $\chi(H) = \omega(H)$ for all induced subgraphs $H$ of $G$ (Berge 1961). 
It is \emph{chordal} if every induced cycle in it is a triangle (Dirac 1961).
It is \emph{split} if its vertex set can be partitioned into a clique and a stable set (F\"{o}ldes--Hammer 1977). 
It is \emph{weakly chordal} if every induced cycle in it or its complement is either a triangle or a square (Hayward 1985).
We have the following chain of inclusions: 
  $$\text{Threshold} \subset \text{Split} \subset \text{Chordal} \subset \text{Weakly Chordal} \subset \text{Perfect}.$$
  All the inclusions except the last one are easy. The last inclusion can be proved directly \cite{RBH} or one can use the Strong Perfect Graph Theorem \cite{CRST} to see the inclusion immediately.  

  Mock threshold graphs are perfect (Proposition \ref{MTGPerfect}) and indeed weakly chordal (Corollary \ref{WEAKLYCHORDAL}) but not necessarily chordal. The cycle of length four is a mock threshold graph that is not chordal.
 
\begin{proposition}\label{CLOSEDUNDERCOMPLEMENTATION}
 The complement of a mock threshold graph is also mock threshold.
\end{proposition}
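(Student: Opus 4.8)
The key observation: if $G$ has an MT-ordering $v_1, \ldots, v_n$, I claim the *same* ordering works for $\overline{G}$. Look at vertex $v_i$ in $G{:}\{v_1,\ldots,v_i\}$: it has $i-1$ "potential neighbors" among $v_1,\ldots,v_{i-1}$. If $\deg_G(v_i) = d$ in this induced subgraph, then $\deg_{\overline{G}}(v_i) = (i-1) - d$ in $\overline{G}{:}\{v_1,\ldots,v_i\}$. The allowed degree set $\{0, 1, i-2, i-1\}$ is symmetric under $d \mapsto (i-1)-d$: indeed $0 \leftrightarrow i-1$ and $1 \leftrightarrow i-2$. So the MT condition is preserved vertex-by-vertex. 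That's essentially the whole argument.

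So the plan is: (1) Let $v_1,\ldots,v_n$ be an MT-ordering of $G$. (2) Fix $i$ and note that $\overline{G}{:}\{v_1,\ldots,v_i\} = \overline{G{:}\{v_1,\ldots,v_i\}}$ (complementation commutes with taking induced subgraphs). (3) Hence $\deg_{\overline{G}{:}\{v_1,\ldots,v_i\}}(v_i) = (i-1) - \deg_{G{:}\{v_1,\ldots,v_i\}}(v_i)$. (4) Since the right-hand subtrahend lies in $\{0,1,i-2,i-1\}$ and this set is invariant under $x \mapsto (i-1)-x$, the result also lies in $\{0,1,i-2,i-1\}$. (5) Therefore $v_1,\ldots,v_n$ is an MT-ordering of $\overline{G}$, so $\overline{G} \in \MT$.

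There is no real obstacle here — the only thing to be careful about is the edge case where $i-1 < 2$, i.e.\ $i = 1$ or $i = 2$, so that the values $i-2$ and $i-1$ may coincide with $0$ or $1$ or be negative; but in those cases the possible degrees are just $0$ (for $i=1$) or $0,1$ (for $i=2$), and the symmetry $x \mapsto (i-1)-x$ still maps the allowed set to itself, so nothing breaks. I would mention this in a single clause rather than belabor it. This proposition is the mirror of Proposition \ref{CLOSEDUNDERINDUCEDSUBGRAPHS} and, combined with it, shows $\MT$ is closed under both operations, which is what makes the forbidden-induced-subgraph characterization in Section \ref{forbidden} naturally closed under complementation.
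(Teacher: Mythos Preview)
Your proof is correct and is exactly the same approach as the paper's, which simply says ``The same vertex ordering works.'' You have merely spelled out the one-line justification---that the allowed degree set $\{0,1,i-2,i-1\}$ is invariant under $d \mapsto (i-1)-d$---which the paper leaves implicit.
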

\begin{proof}
 The same vertex ordering works. 
\end{proof}

\begin{proposition}\label{MTGPerfect}
 A mock threshold graph is perfect.
\end{proposition}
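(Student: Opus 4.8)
The plan is to induct on the number of vertices, using the MT-ordering to peel off a single vertex and invoke the structure of its neighborhood. Let $G$ be a mock threshold graph with MT-ordering $v_1,\dots,v_n$, and let $v=v_n$. By definition the degree of $v$ in $G=G{:}\{v_1,\dots,v_n\}$ is $0$, $1$, $n-2$, or $n-1$; equivalently, in $G$ either $v$ or (passing to the complement, which is also mock threshold by Proposition \ref{CLOSEDUNDERCOMPLEMENTATION}) its non-neighborhood is ``small.'' By Proposition \ref{CLOSEDUNDERINDUCEDSUBGRAPHS} the graph $G-v$ is mock threshold, so by induction it is perfect; I then want to show that adding back $v$ preserves perfection.

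First I would dispose of the two easy cases. If $\deg_G(v)=0$, then $G$ is the disjoint union of $G-v$ and an isolated vertex, and a disjoint union of perfect graphs is perfect (both $\chi$ and $\omega$ are the max over components, applied to every induced subgraph). If $\deg_G(v)=n-1$, then $v$ is adjacent to everything, so $G$ is obtained from the perfect graph $G-v$ by adding a dominating vertex; this is perfect because $\chi(G)=\chi(G-v)+1$ and $\omega(G)=\omega(G-v)+1$, and the same holds for every induced subgraph containing $v$ while induced subgraphs avoiding $v$ lie in $G-v$. By complementation (Proposition \ref{CLOSEDUNDERCOMPLEMENTATION}), the case $\deg_G(v)=1$ reduces to the case where $v$ is nonadjacent to exactly one vertex, which in turn follows from the pattern below, so the crux is the case $\deg_G(v)=1$.

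So suppose $\deg_G(v)=1$, say $v$'s unique neighbor is $u$. I would verify perfection directly via the Strong Perfect Graph Theorem (already invoked in the excerpt): I claim $G$ has no odd hole and no odd antihole of length $\ge 5$. A pendant vertex $v$ of degree $1$ cannot lie on any induced cycle of length $\ge 4$ (a cycle vertex has two neighbors on the cycle), so every odd hole of $G$ avoids $v$ and hence lies in the perfect graph $G-v$ — impossible. For odd antiholes $\overline{C_{2k+1}}$ with $k\ge 2$: every vertex of $\overline{C_{2k+1}}$ has degree $2k-2\ge 2$, so again $v$, having degree $1$, cannot participate, and any such antihole lies in $G-v$ — impossible. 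Therefore $G$ is Berge, hence perfect. (Alternatively, and perhaps cleaner: a graph with a pendant vertex is perfect iff the graph minus the pendant vertex is, by a direct $\chi$–$\omega$ argument, since a pendant vertex can always be colored last and never enlarges a clique beyond size $2$.)

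The main obstacle is simply making sure the degree-$1$ (equivalently, by complementation, degree-$(n-2)$) case is handled with an argument that genuinely uses ``pendant'' rather than ``low degree'': a vertex of degree $1$ is the one new phenomenon versus threshold graphs, and the key observation is that a pendant vertex is too low-degree to sit on any induced cycle or any induced antihole of length $\ge 5$. Once that is noted, the Strong Perfect Graph Theorem closes the argument immediately; if one prefers to avoid SPGT, the pendant-vertex perfection lemma can be proved from scratch in a line or two, giving a self-contained induction. I would also remark that this proof in fact shows the stronger Corollary \ref{WEAKLYCHORDAL} is not needed here, though it gives an alternate route.
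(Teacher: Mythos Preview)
Your argument is correct, but it takes a heavier route than the paper. The paper's proof is a two-line induction: adding an isolated vertex or a leaf preserves perfection (stated as a known elementary fact), and then the \emph{Weak} Perfect Graph Theorem (Lov\'asz) handles the dominating and near-dominating cases by complementation. Your primary argument instead invokes the \emph{Strong} Perfect Graph Theorem to rule out odd holes and antiholes through a pendant vertex; this works, but SPGT is vastly deeper than what is needed here. Your parenthetical alternative---that adding a pendant vertex preserves perfection via a direct $\chi$/$\omega$ check on induced subgraphs---is exactly the paper's approach, and is the cleaner choice. One wording slip: you wrote that ``the case $\deg_G(v)=1$ reduces to the case where $v$ is nonadjacent to exactly one vertex,'' which is backwards; you mean the $\deg_G(v)=n-2$ case reduces, via complementation and the (Weak) Perfect Graph Theorem, to the pendant case. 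That step does require knowing that complements of perfect graphs are perfect, so even in your SPGT version you are implicitly using Lov\'asz's theorem (or the self-complementarity of the Berge condition) at that point.
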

\begin{proof}
 Adding an isolated vertex or a leaf preserves perfection. The Weak Perfect Graph Theorem \cite{LL} tells us that a graph is perfect if and only if its complement is also perfect. Hence the four operations of constructing a mock threshold graph from $K_1$, viz.,  adding an isolated  vertex, adding a leaf, adding a dominating vertex, and adding a vertex that dominates all but one vertex, preserve perfection. Hence every mock threshold graph is perfect.
\end{proof}

%%%%%%%%%%%%%%%%%%%%%%%%
%%%%%%%%%%%%%%%%%%%%%%%%
\section{Preliminaries}\label{prelim}

All our graphs are finite and simple, that is, we allow neither loops nor multiple edges. 
When we say $G$ contains $H$, we mean that $G$ contains $H$ as a subgraph. 
Let $k$ be a positive integer. Then $P_k$, $C_k$, $K_k$ denote the path, cycle, and complete graph, respectively,  on $k$ vertices.  
We denote the complement of $G$ by $\overline{G}$.  The neighborhood of a vertex $v$ is denoted by $N(v)$. 
For a positive integer $k$, the \emph{$k$-core} of a graph $G$ is the graph obtained from $G$ by repeatedly deleting vertices of degree less than $k$. It is routine to show that this is well defined.  

For $G$ a graph, $\delta(G)$ and $\Delta(G)$ denote the minimum and maximum degree of $G$, respectively.  
The chromatic number of $G$, denoted by $\chi(G)$, is the smallest positive integer $k$ such that its vertices can be colored with $k$ colors so that adjacent vertices receive different colors. The clique number of $G$, denoted by $\omega(G)$, is the largest positive integer $k$ such that the complete graph on $k$ vertices is a subgraph of $G$.  
The \emph{codegree} $\overline{d}(v)$ of a vertex $v$ in $G$ is its degree in the complement $\overline G$.

%%%%%%%%%%%%%%%%%%%%%%%%
%%%%%%%%%%%%%%%%%%%%%%%%
\section{Simple properties}\label{simple}

We list some easy but useful properties and examples.

\begin{lemma}\label{MINDEGREEMAXDEGREE}
 Let $G$ be a graph on $n$ vertices with $2 \leq \delta(G) \leq \Delta(G) \leq n-3$. Then $G$ is not mock threshold. 
\end{lemma}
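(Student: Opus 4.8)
The plan is to argue directly from the definition of an MT-ordering by looking at the *last* vertex in any putative ordering. Suppose for contradiction that $G$ is mock threshold, with $2 \le \delta(G) \le \Delta(G) \le n-3$, and fix an MT-ordering $v_1, \dots, v_n$. Consider the last vertex $v_n$: its degree in $G{:}\{v_1,\dots,v_n\} = G$ must be $0$, $1$, $n-2$, or $n-1$. But $\deg_G(v_n) \ge \delta(G) \ge 2$ rules out $0$ and $1$, and $\deg_G(v_n) \le \Delta(G) \le n-3$ rules out $n-2$ and $n-1$. This is an immediate contradiction, so no such ordering exists.

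The only subtlety worth spelling out is that the degree of $v_n$ in the induced subgraph on $\{v_1,\dots,v_n\}$ is exactly its degree in $G$, since that induced subgraph *is* $G$; so the constraint imposed by the MT-ordering on the final vertex is a constraint on $\deg_G(v_n)$ itself. Once that is observed, the hypothesis on $\delta(G)$ and $\Delta(G)$ excludes all four permitted values, and we are done. I expect there is essentially no obstacle here — the whole point of the lemma is to record this one-line observation, which will presumably be used repeatedly in the forbidden-subgraph analysis of Section \ref{forbidden} (for instance to certify that various small graphs, or their relevant pieces, are not mock threshold). If one wanted to phrase it without contradiction, one could equally say: in any vertex ordering of $G$, the last vertex has degree $0$, $1$, $n-2$, or $n-1$ in $G$ exactly when that ordering is MT-compatible at its final step, and the hypothesis guarantees *no* vertex of $G$ has such a degree, so no ordering can even get past its last step.
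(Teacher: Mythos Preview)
Your proof is correct and is exactly the intended argument; the paper states this lemma without proof, treating it as immediate from the definition, and your observation about the last vertex in any putative MT-ordering is precisely the one-line reason.
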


A \emph{removable vertex} is a vertex whose degree or codegree is at most 1.  Thus, a graph with no removable vertex cannot be mock threshold.

\begin{lemma}\label{REMOVABLE}
Let $G$ be a graph on $n$ vertices. Let $v \in V(G)$ be removable. Then $G$ is mock threshold if and only if $G - \{v\}$ is mock threshold. 
\end{lemma}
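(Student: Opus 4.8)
The plan is to prove the two implications separately. The forward implication is immediate from the hereditary property already established, and the converse amounts to the single observation that a removable vertex can always be appended at the end of an MT-ordering.

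For the forward direction, suppose $G$ is mock threshold. Since $G - \{v\}$ is an induced subgraph of $G$, Proposition \ref{CLOSEDUNDERINDUCEDSUBGRAPHS} immediately gives that $G - \{v\}$ is mock threshold.

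For the converse, suppose $G - \{v\}$ is mock threshold, and let $v_1, \ldots, v_{n-1}$ be an MT-ordering of $G - \{v\}$. I claim that $v_1, \ldots, v_{n-1}, v$ is an MT-ordering of $G$. For each $i \le n-1$ the set $\{v_1, \ldots, v_i\}$ does not contain $v$, so $G{:}\{v_1, \ldots, v_i\} = (G - \{v\}){:}\{v_1, \ldots, v_i\}$, and hence the degree condition at position $i$ is inherited from the given MT-ordering of $G - \{v\}$. For position $n$ the relevant induced subgraph is $G$ itself, and the degree of $v$ there is $d(v)$. If $v$ has degree at most $1$ in $G$, then $d(v) \in \{0,1\}$; if instead $v$ has codegree at most $1$, then $d(v) \ge (n-1) - 1 = n - 2$, so $d(v) \in \{n-2, n-1\}$. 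Either way the condition $d(v) \in \{0, 1, n-2, n-1\}$ holds, so the extended ordering is an MT-ordering and $G$ is mock threshold.

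There is essentially no obstacle; the only thing to be careful about is that appending one vertex at the end of an MT-ordering leaves all previously verified degree conditions intact, which holds precisely because each initial segment $\{v_1,\ldots,v_i\}$ with $i \le n-1$ does not involve the new vertex $v$.
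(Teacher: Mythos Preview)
Your proof is correct and follows essentially the same approach as the paper: both invoke Proposition~\ref{CLOSEDUNDERINDUCEDSUBGRAPHS} for the forward direction and, for the converse, append the removable vertex $v$ to the end of an MT-ordering of $G-\{v\}$. You simply spell out in more detail why the appended ordering satisfies the degree condition at every position, whereas the paper leaves this as a one-line observation.
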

  
\begin{proof}
If $G$ is mock threshold, then every induced subgraph of $G$ is also mock threshold (by Proposition \ref{CLOSEDUNDERINDUCEDSUBGRAPHS}). This proves one direction. For the other direction, assume that $G - \{v\}$ is mock threshold. Thus it has an MT-ordering. Add $v$ as the last vertex to this MT-ordering; since $v$ is removable, this is an MT-ordering for $G$.  Hence $G$ is mock threshold.
\end{proof}

 \begin{proposition}
 $K_n$ and $K_{2,n}$ are mock threshold.
 \end{proposition}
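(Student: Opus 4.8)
The plan is to exhibit explicit MT-orderings for each graph, using the definition directly. For $K_n$, the natural ordering $v_1, \dots, v_n$ of all vertices works: when $v_i$ is added, it is adjacent to all of $v_1, \dots, v_{i-1}$, so its degree in $G{:}\{v_1, \dots, v_i\}$ is exactly $i-1$, which is allowed. (Alternatively, one can note that $K_n$ is threshold, hence mock threshold, but the direct argument is immediate.)

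For $K_{2,n}$, write the bipartition as $\{a, b\} \cup \{u_1, \dots, u_n\}$, where $a, b$ are the two vertices on the small side and $u_1, \dots, u_n$ the $n$ vertices on the large side, each adjacent to both $a$ and $b$. I would order the vertices as $u_1, u_2, \dots, u_n, a, b$. First check the $u_i$: since the $u_i$ form a stable set, each $u_i$ has degree $0$ in $G{:}\{u_1, \dots, u_i\}$, which is allowed. Next, when $a$ is added at position $n+1$, it is adjacent to all of $u_1, \dots, u_n$, so its degree in the induced subgraph on the first $n+1$ vertices is $n = (n+1) - 1$, the "dominating vertex" case. Finally, when $b$ is added at position $n+2$, it is adjacent to $u_1, \dots, u_n$ but not to $a$, so its degree is $n = (n+2) - 2$, the "dominates all but one" case. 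Hence this is a valid MT-ordering and $K_{2,n}$ is mock threshold.

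I do not expect any real obstacle here; the only thing to be slightly careful about is the handling of small values of $n$ (e.g. $n = 0$ or $n = 1$, or $K_{2,0} = 2K_1$, $K_{2,1} = P_3$), but the orderings above degenerate correctly in those cases, so a single sentence suffices. One could also invoke Lemma \ref{REMOVABLE} repeatedly — each $u_i$ is a vertex of degree $\le 2$... but in fact degree exactly $2$, so that lemma does not directly apply to the $u_i$; it does apply to peel off $a$ and $b$ (each of codegree $1$ in $K_{2,n}$ once... actually $\bard(a) = n$), so the cleanest route really is the explicit ordering rather than the removable-vertex reduction. I would therefore present just the two orderings with the one-line degree checks.
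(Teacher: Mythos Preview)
Your explicit MT-orderings for $K_n$ and $K_{2,n}$ are correct and are exactly the kind of verification the paper has in mind; the paper itself states this proposition without proof, treating it as an immediate observation.

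One small correction to your side remark: in $K_{2,n}$ the vertex $a$ has codegree $\bard(a)=1$ (its only non-neighbor is $b$), not $n$. So Lemma~\ref{REMOVABLE} \emph{does} apply directly: remove $a$ (codegree $1$), then $b$ becomes dominating and is removable, and what remains is a stable set. This gives an equally clean alternative. But since your main argument via the ordering $u_1,\dots,u_n,a,b$ is already complete and correct, this is only a cosmetic point.
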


\begin{proposition} \label{FORESTSAREMOCKTHRESHOLD}
 A forest is a mock threshold graph.
\end{proposition}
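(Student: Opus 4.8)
The plan is to show that any forest admits an MT-ordering by a straightforward induction, exploiting the fact that forests always contain low-degree vertices. First I would recall that every forest $F$ with at least one vertex has a vertex of degree at most $1$: if $F$ has an edge, take a leaf of any nontrivial component; if $F$ has no edge, every vertex is isolated. Such a vertex is removable in the sense of Lemma~\ref{REMOVABLE} (its degree is $0$ or $1$).

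The main step is then a clean application of Lemma~\ref{REMOVABLE} together with Proposition~\ref{FORESTSAREMOCKTHRESHOLD}'s own inductive structure. I would argue by induction on $n = |V(F)|$. The base case $n \le 1$ is trivial, since $K_1$ (or the empty graph) is vacuously mock threshold. For the inductive step, pick a vertex $v$ of $F$ with $\deg_F(v) \le 1$; then $F - v$ is again a forest (deleting a vertex from a forest cannot create a cycle) on $n-1$ vertices, hence mock threshold by the induction hypothesis. Since $v$ is removable in $F$, Lemma~\ref{REMOVABLE} gives that $F$ is mock threshold. Equivalently, and perhaps more concretely, one can build the MT-ordering directly: repeatedly strip off a degree-$\le 1$ vertex, record the reverse of the deletion order as $v_1, \dots, v_n$, and note that when $v_i$ is added back, its degree in $F{:}\{v_1, \dots, v_i\}$ is at most its degree at the time of deletion, namely $0$ or $1$, which is an allowed value in the definition of MT-ordering.

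I do not anticipate a real obstacle here; the only thing to be careful about is the elementary observation that a forest always has a vertex of degree at most $1$ and that vertex-deletion preserves being a forest, both of which are standard. One could alternatively phrase the whole argument as: order the vertices so that each vertex is a leaf or isolated in the induced subgraph on itself and its predecessors — which is exactly a reverse leaf-removal order — and observe this is an MT-ordering because degrees $0$ and $1$ are permitted. This also makes transparent why the class is strictly larger than threshold graphs: for threshold graphs only degrees $0$ and $i-1$ are allowed, so e.g.\ $P_4$ is excluded, whereas it is trivially a mock threshold graph by this argument.
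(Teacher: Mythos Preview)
Your argument is correct. The paper actually states this proposition without proof, treating it as self-evident from the definition; your induction via Lemma~\ref{REMOVABLE} (or equivalently the reverse leaf-stripping order) is exactly the intended elementary justification.
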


\begin{proposition}\label{2-CORE}
 A graph is mock threshold if and only if its $2$-core is also mock threshold.
\end{proposition}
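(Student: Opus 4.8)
The plan is to reduce immediately to Lemma \ref{REMOVABLE} by observing that forming the $2$-core is nothing more than a finite sequence of deletions of removable vertices. First I would unwind the definition: the $2$-core of $G$ is obtained by repeatedly deleting vertices of degree less than $2$, i.e., vertices of degree $0$ or $1$. Any such vertex has degree at most $1$, hence is removable in the sense defined just before Lemma \ref{REMOVABLE}. So there is a sequence $G = G_0, G_1, \ldots, G_m$ in which each $G_{j+1}$ is obtained from $G_j$ by deleting a vertex $v_j$ that has degree $0$ or $1$ \emph{in $G_j$} (and is therefore removable in $G_j$), and $G_m$ is the $2$-core.

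Next I would induct on $m$, the number of deletions. The base case $m = 0$ is trivial, since then $G$ equals its own $2$-core. For the inductive step, $G_0$ is mock threshold if and only if $G_1 = G_0 - \{v_0\}$ is mock threshold, by Lemma \ref{REMOVABLE} applied to the removable vertex $v_0$ of $G_0$; and $G_1$ is mock threshold if and only if its $2$-core is, by the induction hypothesis applied to the shorter sequence $G_1, \ldots, G_m$. Since the $2$-core of $G_1$ is $G_m$, which is also the $2$-core of $G$, we conclude that $G$ is mock threshold if and only if its $2$-core is.

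There is essentially no obstacle here; the only point that needs a word of care is that the choice of deletion sequence is immaterial, and this is exactly the well-definedness of the $k$-core already noted in Section \ref{prelim}. One could equally well avoid that remark entirely by fixing one arbitrary such sequence at the outset, since the argument above works verbatim for any single sequence witnessing the passage from $G$ to its $2$-core.
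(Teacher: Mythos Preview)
Your proof is correct and follows essentially the same approach as the paper. The paper's version is terser: it cites Proposition~\ref{CLOSEDUNDERINDUCEDSUBGRAPHS} for one direction and the observation that adding an isolated or pendant vertex preserves the mock threshold property for the other, whereas you bundle both directions into a single induction via Lemma~\ref{REMOVABLE} (whose proof already encapsulates exactly those two facts).
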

\begin{proof}
 One direction follows from Proposition \ref{CLOSEDUNDERINDUCEDSUBGRAPHS}. The other direction follows from the fact that a graph obtained from a mock threshold graph by adding an isolated vertex or a pendant edge is also mock threshold.
\end{proof}

\begin{lemma}\label{DISJOINTCYCLES}
A graph consisting of two disjoint cycles is not mock threshold.
\end{lemma}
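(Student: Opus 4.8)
The plan is to reduce the statement immediately to Lemma \ref{MINDEGREEMAXDEGREE}. Write $G = C_m \sqcup C_{m'}$ for the vertex-disjoint union of two cycles, with $m, m' \geq 3$, so that $G$ has $n := m + m' \geq 6$ vertices. Every vertex of a cycle has degree $2$, so $\delta(G) = \Delta(G) = 2$. The only thing left to check is the upper bound $\Delta(G) \leq n - 3$, i.e.\ $2 \leq m + m' - 3$, which holds because $m + m' \geq 6$. Hence $2 \leq \delta(G) \leq \Delta(G) \leq n - 3$, and Lemma \ref{MINDEGREEMAXDEGREE} yields that $G$ is not mock threshold.

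Alternatively — and this is essentially the content of Lemma \ref{MINDEGREEMAXDEGREE} specialized to this situation — one can argue straight from the definition. If $v_1, \ldots, v_n$ were an MT-ordering of $G$, then taking $i = n$ forces $\deg_G(v_n) \in \{0, 1, n-2, n-1\}$; equivalently, $v_n$ must be removable, i.e.\ have degree or codegree at most $1$. But in $G$ every vertex has degree $2$ and codegree $n - 1 - 2 = n - 3 \geq 3$, so $G$ has no removable vertex, and therefore no MT-ordering. (This is precisely the remark following Lemma \ref{REMOVABLE}.)

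There is essentially no obstacle here: the one numerical fact to verify is $m + m' \geq 6$, which is forced by $m, m' \geq 3$, the extreme case being $C_3 \sqcup C_3$ with $n = 6$ and $n - 3 = 3 \geq 2$. Accordingly I would record the proof as a one-line appeal to Lemma \ref{MINDEGREEMAXDEGREE}, noting that each of the $\geq 6$ vertices has degree $2$.
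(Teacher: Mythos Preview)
Your proof is correct and matches the paper's approach: the paper's proof is the single sentence ``The graph has no removable vertices,'' which is exactly your second paragraph (and equivalently the appeal to Lemma~\ref{MINDEGREEMAXDEGREE} in your first paragraph, via the remark immediately following that lemma).
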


\begin{proof}
The graph has no removable vertices.
\end{proof}

\emph{Attaching a graph $H$ to a vertex} $v$ of $G$ means that a vertex of $H$ is identified with $v$.  (It is assumed that $G$ and $H$ are disjoint and that $H$ is not $K_1$.)

\begin{proposition}\label{BIPARTITE}
A bipartite graph is mock threshold if and only if either it is a forest, or it has one component that consists of $K_{2,s}$, $s\geq2$, with a tree optionally attached to each vertex, and any other components are trees.
\end{proposition}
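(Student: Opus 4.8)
The plan is to prove both directions, using the machinery already in place: Proposition \ref{CLOSEDUNDERINDUCEDSUBGRAPHS} (hereditary), Proposition \ref{2-CORE} (pass to the 2-core), Lemma \ref{REMOVABLE} (strip removable vertices), Lemma \ref{DISJOINTCYCLES} (two disjoint cycles are bad), and Proposition \ref{FORESTSAREMOCKTHRESHOLD} (forests are fine). For the ``if'' direction, I would check directly that the described graphs are mock threshold. A forest is handled by Proposition \ref{FORESTSAREMOCKTHRESHOLD}. For the component built from $K_{2,s}$ with trees attached: first observe that building an MT-ordering is unaffected by trees hanging off of vertices, since in an MT-ordering we may grow each attached tree leaf-by-leaf (each new vertex has degree $1$ at the moment it is added) after the ``core'' part is ordered; so it suffices to MT-order $K_{2,s}$ itself and then append the tree vertices in a suitable leaf order. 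To MT-order $K_{2,s}$ with parts $\{a,b\}$ and $\{c_1,\dots,c_s\}$: take $c_1, c_2, a, b, c_3, c_4, \dots, c_s$ — the first two vertices trivially satisfy the degree condition, $a$ added third has degree $2 = i-1$ with $i=3$, $b$ added fourth has degree $2 = i-2$ with $i=4$, and each subsequent $c_j$ has degree $2$; when it is added as vertex $i$ we need $2 \in \{0,1,i-2,i-1\}$, which holds once... wait, for $c_3$ we have $i=5$ and degree $2$, and $2 \notin \{0,1,3,4\}$. So that order fails; instead use $a, b, c_1, c_2, \dots, c_s$: vertex $a$ (i=1) has degree $0$; vertex $b$ (i=2) has degree $0$; each $c_j$ added as vertex $i = j+2$ has degree $2 = i - j$, which is $i-2$ exactly when $j = 2$... that also only works for $c_2$. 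The correct choice is to add all the $c_j$ first and $a, b$ last: order $c_1, \dots, c_s, a, b$. Then $c_1,\dots,c_s$ form a stable set so each has degree $0$; $a$ as vertex $i = s+1$ has degree $s = i-1$; $b$ as vertex $i = s+2$ has degree $s = i-2$. This is a valid MT-ordering, and then we append the attached trees leaf-by-leaf. Finally, appending whole tree-components at the end is fine since we can MT-order each tree internally (forests are mock threshold) and concatenate — each fresh component starts with an isolated vertex.

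For the ``only if'' direction, suppose $G$ is bipartite and mock threshold but not a forest. By Proposition \ref{2-CORE} we may replace $G$ by its $2$-core, which is a bipartite graph with $\delta \geq 2$, still mock threshold, and nonempty since $G$ is not a forest. By Lemma \ref{DISJOINTCYCLES}, and more generally since a mock threshold graph must have a removable vertex, the $2$-core has minimum degree $\geq 2$ and hence a removable vertex must be a vertex $v$ with $\overline{d}(v) \leq 1$, i.e., $v$ is adjacent to all or all-but-one of the other vertices. Since $G$ is bipartite and $v$ has degree $\geq 2$, $v$ is in one part $X$ and $N(v) \subseteq Y$ where $Y$ is the other part; $\overline d(v) \le 1$ forces $|X| + |Y| - 1 - \deg(v) \le 1$, and since $v$'s non-neighbors include all of $X \setminus \{v\}$, we get $|X| - 1 \le \overline{d}(v) \le 1$, so $|X| \le 2$. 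If $|X| = 1$ then $v$ is adjacent to every vertex of $Y$ and, as the $2$-core has minimum degree $\ge 2$, every $y \in Y$ has another neighbor, forcing $|X| \ge 2$, a contradiction; so $|X| = 2$, say $X = \{a,b\}$. Now $\overline d(a) \le 1$ means $a$ misses at most one vertex; its non-neighbors are $b$ together with $Y \setminus N(a)$, so $|Y \setminus N(a)| \le 0$, i.e., $a$ is adjacent to all of $Y$, and likewise $b$ is adjacent to all of $Y$. Hence the $2$-core is exactly $K_{2,s}$ with $s = |Y| \geq 2$. Unwinding Proposition \ref{2-CORE}, the passage from $G$ to its $2$-core only deletes vertices of degree $\le 1$ repeatedly, which is exactly the reverse of attaching trees to vertices and adding tree components; so $G$ has the claimed structure.

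The main obstacle I anticipate is the bookkeeping in the ``only if'' direction: one must be careful that the ``removable vertex'' could a priori have small \emph{degree} rather than small codegree, and rule that out using the $2$-core hypothesis, and then argue that \emph{both} vertices of the smaller part are complete to the larger part (not just one). I expect the cleanest route is actually to induct: in a bipartite mock threshold $G$ that is not a forest, take any MT-ordering $v_1,\dots,v_n$ and look at the last vertex $v_n$; it has degree or codegree $\le 1$ in $G$, and codegree $\le 1$ plus bipartiteness pins down the global structure as above, while degree $\le 1$ lets us delete $v_n$ and induct (it was a leaf or isolated vertex, i.e., an attached-tree leaf or a tree component). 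A secondary, purely technical point is verifying that attaching trees to a $K_{2,s}$ and adding tree components genuinely preserves mock-thresholdness in the ``if'' direction — this is routine from Lemma \ref{REMOVABLE} applied repeatedly (every attached-tree leaf and every leaf of a tree component is removable), so it is not a real obstacle, just something to state carefully.
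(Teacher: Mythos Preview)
Your approach is essentially the paper's: for the ``if'' direction peel off leaves to reduce to $K_{2,s}$ (your explicit ordering $c_1,\dots,c_s,a,b$ is fine; the paper just says ``any such graph has a removable vertex and deleting it yields a graph of the same type''), and for the ``only if'' direction pass to the $2$-core, observe that minimum degree $\ge 2$ forces any removable vertex to have codegree $\le 1$, and use bipartiteness to pin down one side of the bipartition to size $2$.

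There is one real slip, precisely at the point you yourself flag as an obstacle. After writing $X=\{a,b\}$ you argue $\overline d(a)\le 1$ (because $a$ is the removable vertex you found) and deduce $N(a)=Y$, then say ``and likewise $b$ is adjacent to all of $Y$.'' But you have no reason to know $\overline d(b)\le 1$; the ``likewise'' does not go through. The correct one-line fix is exactly what the paper does: once $|X|=2$, look at $Y$ instead of at $b$. Every $y\in Y$ has $\deg(y)\ge 2$ in the $2$-core and $N(y)\subseteq X=\{a,b\}$, so $y$ is adjacent to both $a$ and $b$; hence the $2$-core is $K_{2,s}$. With that replacement your argument is complete and matches the paper's.
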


\begin{proof}
It is clear that any such graph has a removable vertex and that deleting that vertex results in a graph of the same type.  

Now, let $G$ be bipartite and mock threshold.  We may prune all leaves and isolated vertices until the only remaining vertices are in cycles.  The resulting graph $G'$ must be mock threshold (Proposition \ref{CLOSEDUNDERCOMPLEMENTATION}) and connected (Lemma \ref{DISJOINTCYCLES}).  Every vertex has degree 2 or more, so there must be a (removable) vertex of codegree 0 or 1.  Let $G'$ have left  vertex set $X$ and right vertex set $Y$ with $r:=|X|$ and $s:=|Y|$; necessarily $r,s \geq 2$.  If $r,s \geq 3$, every vertex has codegree at least 2; therefore $r$, say, equals 2.  Then every vertex $y\in Y$, having degree $d(y)>1$, is adjacent to both members of $X$; that is, $G' = K_{2,s}$.
\end{proof}

The class of mock threshold graphs has the valuable property of being closed under contraction.  Contracting an edge $v_iv_j$ in a simple graph $G$ can be defined as replacing $v_i$ and $v_j$ by a new vertex $v_{ij}$ whose neighborhood is $N(v_i)\cup N(v_j) - \{v_i,v_j\}$.  The notation for the contracted graph is $G/e$.  A \emph{contraction} of $G$ is any graph, including $G$ itself, obtained by a sequence of edge contractions.  An \emph{induced minor} of $G$ is any induced subgraph of a contraction.

\begin{proposition}\label{P:contract}
Any contraction of a mock threshold graph is mock threshold.  In particular, if $G$ is a mock threshold graph and $e$ is an edge in $G$, then $G/e$ is mock threshold.
\end{proposition}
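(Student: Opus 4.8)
The plan is to prove the statement about contracting a single edge, since the general contraction statement follows immediately by induction on the number of contracted edges (using that a contraction of a contraction is a contraction). So fix a mock threshold graph $G$ on $n$ vertices and an edge $e = v_iv_j$; I want to show $G/e$ is mock threshold. The natural approach is to take an MT-ordering of $G$ and transform it into an MT-ordering of $G/e$, but the degree conditions ($0$, $1$, $i-2$, $i-1$) are delicate under identification, so I expect the cleanest route is to induct on $n$ and peel off the last vertex of an MT-ordering of $G$.

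First I would set up the induction: let $v_1, \ldots, v_n$ be an MT-ordering of $G$, and let $v = v_n$, so $d_G(v) \in \{0, 1, n-2, n-1\}$ and $G - v$ is mock threshold on $n-1$ vertices with the inherited MT-ordering. There are two cases. If $e$ does not involve $v$, then $e$ is an edge of $G - v$, so by the inductive hypothesis $(G-v)/e$ is mock threshold; now $G/e$ is obtained from $(G-v)/e$ by adding back the vertex $v$, and the key point is that the relationship of $v$ to the rest of the graph is essentially unchanged by the contraction (its neighborhood in $G/e$ is $N_G(v)$ with $v_i, v_j$ possibly merged to one vertex), so $v$ remains removable in $G/e$ — specifically, if $d_G(v) \le 1$ then $d_{G/e}(v) \le 1$, and if $\bar d_G(v) \le 1$ then $\bar d_{G/e}(v) \le 1$ since the complement codegree can only decrease under the merge. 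Then Lemma \ref{REMOVABLE} finishes this case.

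The more interesting case is when $e$ is incident to $v = v_n$, say $e = v_n v_k$ for some $k < n$; here contracting $e$ literally removes $v_n$ and reassigns its neighbors to $v_k$. The cleanest sub-approach: again use the value of $d_G(v_n)$. If $d_G(v_n) \le 1$, then $v_n$'s only neighbor is $v_k$, so $G/e$ is just $G - v_n$, which is mock threshold. If $\bar d_G(v_n) \le 1$, then $v_n$ is adjacent to all vertices except possibly one, say $v_m$; then in $G/e = G/v_nv_k$ the merged vertex $v_k$ is adjacent to everything except possibly $v_m$, so $v_k$ is removable (codegree $\le 1$) in $G/e$, and $G/e - v_k$ equals $G - \{v_n, v_k\}$ (an induced subgraph of $G$, hence mock threshold), so Lemma \ref{REMOVABLE} applies. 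In every case we reduce to a smaller mock threshold graph or directly to an induced subgraph of $G$.

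The main obstacle I anticipate is bookkeeping in the incident-edge case: one must check carefully that after the merge the claimed vertex really is removable and that deleting it yields exactly an induced subgraph of $G$ (or of a smaller contraction handled by induction), with no spurious edges created — contraction in a simple graph identifies parallel edges, which only deletes adjacencies in the complement, so codegrees never go up, and that monotonicity is the crucial observation making all the cases go through. Once that is pinned down, each case is a one-line appeal to Lemma \ref{REMOVABLE} or Proposition \ref{CLOSEDUNDERINDUCEDSUBGRAPHS}, and the induction closes.
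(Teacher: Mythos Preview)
Your argument is correct. The organization differs from the paper's, though the underlying observation is the same. The paper fixes an MT-ordering $v_1,\ldots,v_n$, takes $e=v_iv_j$ with $i<j$, and works directly at position $j$: if $v_j$ is a leaf in the ordering it identifies $G/e$ with $G-v_j$, and if $v_j$ is dominating or near-dominating it deletes $v_i$ and puts $v_{ij}$ in position $j$, obtaining an MT-ordering for $G/e$ in one shot with no induction. You instead induct on $n$ by peeling off the \emph{last} vertex $v_n$: when $e$ misses $v_n$ you contract inside $G-v_n$ and re-attach $v_n$ as a removable vertex (using the codegree-monotonicity you isolate), and when $e$ hits $v_n$ your case analysis is exactly the paper's argument specialized to $j=n$, finishing via Lemma~\ref{REMOVABLE} and Proposition~\ref{CLOSEDUNDERINDUCEDSUBGRAPHS} rather than by exhibiting an explicit MT-ordering. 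The paper's route is shorter; yours is a bit more verbose but makes each degree/codegree check explicit and cleanly separates the ``edge away from the peeled vertex'' case, which the paper's direct construction handles implicitly.
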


\begin{proof}
Let $v_1,\ldots,v_n$ be an MT-ordering of $G$.  Let $e = v_iv_j$, $i < j$; then $v_j$ cannot be isolated. If $v_j$ is isolated or a leaf in the ordering, then $G/e$ is isomorphic to $G-v_j$.  If $v_j$ is near-dominating or dominating, then deleting $v_i$ and replacing $v_j$ by $v_{ij}$ gives an MT-ordering for $G/e$ in which $v_{ij}$ is also near-dominating or dominating.
\end{proof}

%%%%%%%%%%%%%%%%%%%%%%%%
%%%%%%%%%%%%%%%%%%%%%%%%
\section{Minimal graphs that are not mock threshold}\label{forbidden}

A graph is a \emph{minimal non-mock threshold} graph (from now on, minimal non-MTG) if it is not mock threshold but every induced proper subgraph is mock threshold.  We write $\Forb(\MT)$ for the class of minimal non-MTGs. It is straightforward to show that a graph is mock threshold if and only if it does not contain any of the minimal non-MTGs as an induced subgraph.

\begin{proposition}\label{HOLESANDANTIHOLES}
 Let $n \geq 5$. Then both $C_n$ and $\overline{C}_n$ are minimal non-MTGs. 
\end{proposition}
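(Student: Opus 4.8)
The plan is to show two things for each $n \geq 5$: first, that $C_n$ and $\overline{C}_n$ are not mock threshold; second, that deleting any single vertex yields a mock threshold graph (which suffices, since $\Forb(\MT)$ consists of graphs minimal under induced subgraphs and removing one vertex from a minimal example must land inside $\MT$). By Proposition \ref{CLOSEDUNDERCOMPLEMENTATION}, complementation preserves both membership in $\MT$ and the induced-subgraph order, so it is enough to handle $C_n$ and then invoke that proposition for $\overline{C}_n$; I should check that $\overline{C_n}$ is genuinely a different graph from $C_n$ for $n \geq 5$ (they coincide only at $n = 5$, where $C_5 \cong \overline{C_5}$, so there the second claim is automatic once the first is proved).

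For non-membership: in $C_n$ every vertex has degree $2$ and codegree $n-3$. For $n \geq 5$ we have $2 \geq 2$ and $n - 3 \leq n - 3$, so $C_n$ satisfies $2 \leq \delta(C_n) \leq \Delta(C_n) \leq n-3$, and Lemma \ref{MINDEGREEMAXDEGREE} immediately gives that $C_n$ is not mock threshold. (Equivalently, $C_n$ has no removable vertex.) Since $\MT$ is closed under complementation, $\overline{C}_n$ is also not mock threshold.

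For minimality: $C_n$ minus a vertex is the path $P_{n-1}$, which is a forest, hence mock threshold by Proposition \ref{FORESTSAREMOCKTHRESHOLD}; because $C_n$ is vertex-transitive this covers deletion of every vertex. Therefore every proper induced subgraph of $C_n$ is an induced subgraph of $P_{n-1}$ (a forest) and is mock threshold by Proposition \ref{CLOSEDUNDERINDUCEDSUBGRAPHS}, so $C_n$ is a minimal non-MTG. Applying complementation, $\overline{C}_n - v = \overline{C_n - v} = \overline{P_{n-1}}$ is mock threshold (complement of a forest, hence mock threshold by Proposition \ref{CLOSEDUNDERCOMPLEMENTATION}), and again by vertex-transitivity of $\overline{C}_n$ every proper induced subgraph is mock threshold, so $\overline{C}_n$ is a minimal non-MTG as well.

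I do not anticipate a serious obstacle here; the only thing to be careful about is not to double-count or to forget the edge case $n = 5$ where the graph is self-complementary, and to make sure the degree bounds in Lemma \ref{MINDEGREEMAXDEGREE} really do apply (they require $n \geq 5$, which is exactly our hypothesis, so for $n = 5$ we get $\delta = \Delta = 2 = n - 3$, still within range). Everything else is a direct appeal to the already-established closure properties and to the fact that paths and their complements are mock threshold.
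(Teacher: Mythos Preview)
Your proof is correct and follows essentially the same approach as the paper: use Lemma~\ref{MINDEGREEMAXDEGREE} to show $C_n$ is not mock threshold, observe that every proper induced subgraph is a forest (hence mock threshold by Proposition~\ref{FORESTSAREMOCKTHRESHOLD}), and transfer both conclusions to $\overline{C}_n$ via Proposition~\ref{CLOSEDUNDERCOMPLEMENTATION}. Your write-up is simply more explicit about the edge case $n=5$ and about why complementation preserves minimality, but the argument is the same.
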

\begin{proof}
 By virtue of Proposition \ref{CLOSEDUNDERCOMPLEMENTATION}, it suffices to prove that $C_n$ is a minimal non-mock threshold graph. Since no vertex in $C_n$ is removable, we see, by Lemma \ref{MINDEGREEMAXDEGREE}, that $C_n$ is not a mock threshold graph. Every proper subgraph of $C_n$ is a forest, and hence, by Proposition \ref{FORESTSAREMOCKTHRESHOLD}, a mock threshold graph. 
\end{proof}

\begin{corollary}\label{WEAKLYCHORDAL}
Mock threshold graphs are weakly chordal.
\end{corollary}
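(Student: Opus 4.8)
The plan is to derive the corollary directly from Proposition~\ref{HOLESANDANTIHOLES} together with the characterization of weak chordality by forbidden induced subgraphs. Recall that a graph is weakly chordal precisely when it contains no induced cycle $C_n$ with $n \geq 5$ and no induced $\overline{C}_n$ with $n \geq 5$ (the holes and antiholes of length at least five). So it suffices to show that no mock threshold graph contains such an induced subgraph.

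First I would invoke Proposition~\ref{HOLESANDANTIHOLES}, which states that for every $n \geq 5$ both $C_n$ and $\overline{C}_n$ are minimal non-MTGs; in particular, neither $C_n$ nor $\overline{C}_n$ is mock threshold for $n \geq 5$. Then, since the class of mock threshold graphs is hereditary under induced subgraphs (Proposition~\ref{CLOSEDUNDERINDUCEDSUBGRAPHS}), a mock threshold graph cannot contain any non-mock-threshold graph as an induced subgraph. Hence a mock threshold graph has no induced $C_n$ and no induced $\overline{C}_n$ for any $n \geq 5$, which is exactly the condition for weak chordality.

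This argument is essentially a one-line deduction, so there is no real obstacle; the only thing to be careful about is citing the correct characterization of weakly chordal graphs by forbidden induced subgraphs, which is standard (and consistent with the definition given in the introduction, where weak chordality is phrased in terms of induced cycles in $G$ or its complement being triangles or squares). One could alternatively argue straight from that definition: an induced cycle of length $\geq 5$ in $G$ is an induced $C_n$, and an induced cycle of length $\geq 5$ in $\overline{G}$ corresponds to an induced $\overline{C}_n$ in $G$; both are excluded as above.

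Thus the proof reads: By Proposition~\ref{HOLESANDANTIHOLES}, for $n \geq 5$ neither $C_n$ nor $\overline{C}_n$ is mock threshold. By Proposition~\ref{CLOSEDUNDERINDUCEDSUBGRAPHS}, no mock threshold graph contains an induced $C_n$ or $\overline{C}_n$ with $n \geq 5$, i.e., every mock threshold graph is weakly chordal.
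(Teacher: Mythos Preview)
Your proposal is correct and matches the paper's intended argument: the corollary is stated immediately after Proposition~\ref{HOLESANDANTIHOLES} with no separate proof, precisely because it follows at once from that proposition together with the hereditary property (Proposition~\ref{CLOSEDUNDERINDUCEDSUBGRAPHS}) and the definition of weakly chordal. There is nothing to add.
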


\begin{proposition}\label{SMALLGRAPHS}
 Every graph on at most five vertices except $C_5$ is mock threshold.
 \end{proposition}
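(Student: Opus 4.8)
The plan is to show that every graph on at most five vertices other than $C_5$ admits an MT-ordering, and the key is to avoid a brute-force case analysis over all graphs by reducing aggressively with the tools already available. First I would dispose of the trivial ranges: by Proposition \ref{SMALLGRAPHS}'s predecessors, any graph on at most three vertices is mock threshold (this is immediate, since in a graph on at most three vertices every vertex has degree in $\{0,1,2\}=\{0,1,n-2,n-1\}$ for $n\le 3$, so any ordering is an MT-ordering), and for four vertices one checks directly that every vertex of a $4$-vertex graph has degree $0,1,2$, or $3$, hence is removable, so by Lemma \ref{REMOVABLE} every $4$-vertex graph is mock threshold. This leaves the graphs on exactly five vertices.

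For $n=5$, the strategy is to use Lemma \ref{REMOVABLE} and Proposition \ref{2-CORE} to reduce to a small list of irreducible graphs. If a $5$-vertex graph $G$ has a removable vertex $v$ (degree or codegree $\le 1$), then $G$ is mock threshold iff $G-v$ is, and $G-v$ has four vertices, hence is mock threshold; so such a $G$ is automatically mock threshold. Therefore it suffices to examine the $5$-vertex graphs with \emph{no} removable vertex, i.e.\ with $2\le\delta(G)$ and $\Delta(G)\le n-1-2=2$, so that every vertex has degree exactly $2$. A $2$-regular graph on five vertices is a disjoint union of cycles covering all five vertices; the only possibility is $C_5$ itself (a $C_3\cup C_2$ is impossible since $C_2$ is not a simple graph, and $C_4$ plus an isolated vertex is not $2$-regular). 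Thus $C_5$ is the unique $5$-vertex graph with no removable vertex, and by Lemma \ref{MINDEGREEMAXDEGREE} (or Proposition \ref{HOLESANDANTIHOLES}) it is not mock threshold, while every other $5$-vertex graph is.

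The main obstacle here is essentially bookkeeping rather than depth: one must be careful that the reduction ``$G$ has a removable vertex $\Rightarrow$ $G$ is mock threshold'' is correctly anchored by the fact that \emph{all} graphs on at most four vertices are mock threshold, so that the induction on $n$ closes cleanly for $n\le 4$ before invoking it at $n=5$. The only genuinely non-vacuous observation is the characterization of $5$-vertex graphs without a removable vertex as precisely the $2$-regular ones, and that these are exactly $\{C_5\}$; this follows from the handshake-type observation that $\delta\ge 2$ and $\Delta\le 2$ force $2$-regularity, together with the classification of $2$-regular graphs as disjoint unions of cycles. Once that is in hand the proposition is immediate, with Lemma \ref{MINDEGREEMAXDEGREE} supplying the one negative instance.
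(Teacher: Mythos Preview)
Your proof is correct and follows essentially the same approach as the paper: show that every graph on at most four vertices is mock threshold, then observe that a $5$-vertex graph with a vertex of degree not equal to $2$ has a removable vertex (hence is mock threshold by the $4$-vertex case), leaving only the $2$-regular graph $C_5$. Your write-up is more detailed in justifying why the only $5$-vertex $2$-regular graph is $C_5$, but the argument is the same.
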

 \begin{proof}
  As shown in Proposition \ref{HOLESANDANTIHOLES}, $C_5$ is not mock threshold. Clearly, every graph on at most four vertices is mock threshold. This implies that a graph on $5$ vertices is mock threshold if it has a vertex whose degree is not $2$. Every graph on $5$ vertices has such a vertex with the exception of $C_5$. 
 \end{proof}

We are interested in determining $\Forb(\MT)$. Our main theorem is the following: 

\begin{theorem}[Forbidden Induced Subgraphs of Mock Threshold Graphs]\label{FORBIDDEN}
A graph is mock threshold if and only if it contains none of the following as an induced subgraph:
\begin{enumerate}[{\rm(a)}]
\item Cycles of length at least $5$ and their complements.
\item The $318$ graphs, each with at most $10$ vertices, that are the non-cyclic graphs in Figures \ref{ForbiddenMTGraphs7Vertices}--\ref{ForbiddenMTGraphs10Vertices} and their complements.
\end{enumerate}
\end{theorem}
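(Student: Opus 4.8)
The plan is to prove the two directions of the characterization separately, with almost all the work concentrated in the ``if'' direction. The ``only if'' direction is immediate from Proposition \ref{CLOSEDUNDERINDUCEDSUBGRAPHS} together with Proposition \ref{HOLESANDANTIHOLES} and the (to-be-established) fact that each of the $318$ listed graphs and its complement is itself a minimal non-MTG: a mock threshold graph cannot contain any non-mock-threshold graph as an induced subgraph, so in particular it contains none of the graphs in (a) or (b). So the real content is: if $G$ contains none of these graphs as an induced subgraph, then $G$ is mock threshold — equivalently, $\Forb(\MT)$ consists of exactly the cycles $C_n$ and $\overline C_n$ for $n\ge 5$ together with those $318$ graphs and their complements. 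By Proposition \ref{CLOSEDUNDERCOMPLEMENTATION}, $\Forb(\MT)$ is closed under complementation, so it suffices to show every member of $\Forb(\MT)$ is one of the listed graphs (or its complement).

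So first I would set up the structural reduction. Let $G$ be a minimal non-MTG. By Lemma \ref{REMOVABLE} and Proposition \ref{2-CORE} we may assume $G$ equals its own $2$-core and, dually (passing to the complement if necessary), that $\overline G$ also has no removable vertex; that is, $G$ has minimum degree at least $2$, maximum degree at most $n-3$, and neither $G$ nor $\overline G$ is disconnected into the ``$K_{2,s}$-plus-trees'' shapes of Proposition \ref{BIPARTITE}. Lemma \ref{MINDEGREEMAXDEGREE} already shows such a $G$ is non-mock-threshold, but minimality is the extra constraint to exploit: every proper induced subgraph of $G$ \emph{is} mock threshold, hence contains no hole, antihole, or listed graph. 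In particular $G$ itself can contain $C_n$ or $\overline C_n$ ($n\ge5$) only as a spanning subgraph, which pins down $G=C_n$ or $\overline C_n$; so we may further assume $G$ is $\{C_n,\overline C_n:n\ge5\}$-free, i.e. weakly chordal. Now the task is to bound $|V(G)|$: I would argue that a weakly chordal minimal non-MTG with $\delta\ge2$, $\Delta\le n-3$, and the connectivity conditions above must have at most $10$ vertices, at which point a finite check (by computer) identifies precisely the $318$ graphs in the figures.

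The bound on $|V(G)|$ is the heart of the argument and the main obstacle. The idea is a minimal-counterexample / deletion argument: take $G\in\Forb(\MT)$ with $|V(G)|=n$ large and weakly chordal. For each vertex $v$, $G-v$ is mock threshold, so it has an MT-ordering; the failure of $G$ to be mock threshold means no such ordering extends, i.e. in $G-v$ every vertex $v$ could have been placed last but in $G$ is never removable at the end — one must show this forces $v$ to have ``intermediate'' degree ($2\le d_G(v)\le n-3$) in a robust way, and then combine the MT-orderings of $G-v$ over all $v$ to derive enough structure. Concretely I expect to track where the first ``bad'' vertex (one with degree $\notin\{0,1,i-2,i-1\}$ in the induced prefix) can appear, show it creates a small obstruction, and conclude that $G$ restricted to a bounded neighborhood already fails to be mock threshold, contradicting minimality once $n$ exceeds the threshold. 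Weak chordality is what makes neighborhoods and non-neighborhoods well-behaved enough to localize the obstruction; the delicate part is handling the ``near-dominating'' ($i-2$) option, since it gives each vertex one ``mistake'' of non-adjacency and so the argument cannot be a pure threshold-graph argument — one essentially needs a two-sided version of the Chv\'atal--Hammer analysis controlled simultaneously in $G$ and $\overline G$.

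Finally, once $n\le 10$ is established for all non-cyclic members of $\Forb(\MT)$, I would carry out (and report) an exhaustive computer search over graphs on $\le 10$ vertices: test each for being mock threshold by trying to build an MT-ordering (greedily peeling off removable vertices, which by Lemma \ref{REMOVABLE} is without loss of generality), and among the non-mock-threshold ones keep those all of whose proper induced subgraphs pass. This produces exactly the list in Figures \ref{ForbiddenMTGraphs7Vertices}--\ref{ForbiddenMTGraphs10Vertices} and, by Proposition \ref{CLOSEDUNDERCOMPLEMENTATION}, their complements; together with the holes and antiholes from Proposition \ref{HOLESANDANTIHOLES} this exhausts $\Forb(\MT)$ and proves the theorem. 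I would note explicitly in the write-up which steps are ``by inspection of the figures'' / ``by computer'' so the proof's logical skeleton stays clear.
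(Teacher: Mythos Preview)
Your overall architecture matches the paper's: reduce to showing that every minimal non-MTG other than $C_k$ or $\overline C_k$ ($k\ge 5$) has at most $10$ vertices, then finish with an exhaustive computer search. The easy direction and the computer-search portion are fine. The genuine gap is the heart of the argument, bounding $|V(G)|$, where your proposal is a wish list rather than a strategy: ``track where the first bad vertex appears,'' ``combine the MT-orderings of $G-v$ over all $v$,'' and ``localize the obstruction'' do not supply any mechanism for producing a bounded-size witness, and weak chordality alone will not hand you one.

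The paper's mechanism rests on two concrete ideas absent from your sketch. First, from the MT-ordering of $G-v$ one extracts not a ``first bad vertex'' but a \emph{last} vertex $w$, whose degree in $G-v$ lies in $\{0,1,n-3,n-2\}$; combined with $2\le d_G(w)\le n-3$ (Lemma~\ref{DEGREEBOUND}) this forces $w$ to be divalent and adjacent to $v$, or co-divalent and non-adjacent to $v$. A count then gives at least $n/2$ divalent-or-co-divalent vertices, and after complementing if necessary one finds two divalent vertices with distinct neighborhoods. Second, the key device is the \emph{full set}: any $X\subseteq V(G)$ in which every vertex has $\ge 2$ neighbors and $\ge 2$ non-neighbors within $X$ already induces a non-MTG, so minimality forces $X=V(G)$. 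The bound $n\le 10$ comes from a six-case analysis (two divalent vertices, each of triangle- or seagull-type, sharing $0$ or $1$ neighbors) in which one explicitly assembles a full set of size $\le 10$ from those two vertices, their neighbors, and a few auxiliary vertices. Your references to Proposition~\ref{2-CORE} and Proposition~\ref{BIPARTITE} in the setup are also off-target; what you actually need there is just Lemma~\ref{DEGREEBOUND}.
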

 
To prove the theorem we will need the following lemmas.  Let $G$ be a minimal non-MTG on $n$ vertices that is neither a cycle nor its complement. By Proposition \ref{SMALLGRAPHS}, $n > 5$.

\begin{lemma} \label{DEGREEBOUND}
The minimum degree of $G$ is at least $2$.  The maximum degree of $G$ is at most $n-3$.
\end{lemma}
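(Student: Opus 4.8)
The plan is to prove both bounds by exploiting minimality together with the two structural facts already available: Lemma \ref{REMOVABLE} (deleting a removable vertex preserves mock-threshold-ness in both directions) and Proposition \ref{CLOSEDUNDERCOMPLEMENTATION} (mock threshold is self-complementary). Since codegree and complementation turn a maximum-degree statement into a minimum-degree statement, it suffices to prove $\delta(G)\ge 2$; the bound $\Delta(G)\le n-3$ then follows by applying this to $\overline G$, which is also a minimal non-MTG by Proposition \ref{CLOSEDUNDERCOMPLEMENTATION}, and noting $\Delta(G)=n-1-\delta(\overline G)$.

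First I would argue $\delta(G)\ge 1$: if $G$ had an isolated vertex $v$, then $v$ is removable, so by Lemma \ref{REMOVABLE} $G$ is mock threshold if and only if $G-v$ is; but $G-v$ is a proper induced subgraph of a minimal non-MTG, hence mock threshold, forcing $G$ to be mock threshold — a contradiction. Next, suppose $\delta(G)=1$, so some vertex $v$ has degree exactly $1$; then $v$ is removable, and the identical argument via Lemma \ref{REMOVABLE} again shows $G$ is mock threshold, a contradiction. Therefore $\delta(G)\ge 2$.

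For the maximum-degree bound, I would pass to the complement. Since $\overline G$ is a minimal non-MTG (Proposition \ref{CLOSEDUNDERCOMPLEMENTATION}, applied together with Proposition \ref{CLOSEDUNDERINDUCEDSUBGRAPHS} to see minimality transfers), and since $\overline G$ is neither a cycle nor the complement of a cycle whenever $G$ isn't (because $\overline{\overline G}=G$), the first part gives $\delta(\overline G)\ge 2$. A vertex of degree $d$ in $G$ has degree $n-1-d$ in $\overline G$, so $\Delta(G)=n-1-\delta(\overline G)\le n-1-2=n-3$.

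I do not expect any real obstacle here: the argument is essentially a one-line invocation of Lemma \ref{REMOVABLE} in each direction, with the only care needed being the bookkeeping that minimality and the ``neither a cycle nor a cycle-complement'' hypothesis both pass to the complement. The one point worth a sentence of justification is why $n>5$ (hence $G$ has enough vertices for these statements to be non-vacuous), which is already recorded in the excerpt just before the lemma via Proposition \ref{SMALLGRAPHS}; with $n\ge 6$ the inequality $2\le n-3$ is consistent, so there is no degenerate case to rule out.
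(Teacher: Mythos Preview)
Your argument is correct, and at its core it is the same idea as the paper's: a vertex of degree $0$, $1$, $n-2$, or $n-1$ is removable, and removing it from a minimal non-MTG leaves a mock threshold graph, which forces $G$ itself to be mock threshold --- a contradiction. The paper simply treats all four degree values in one stroke (since any such $v$ can be appended to an MT-ordering of $G-v$), whereas you handle $\delta(G)\ge 2$ directly and then pass to $\overline G$ to obtain $\Delta(G)\le n-3$.

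The complementation detour works, but it costs you the extra verification that minimality (and the ``neither a cycle nor a cycle-complement'' hypothesis) transfers to $\overline G$. The paper avoids this entirely by observing that degree $n-2$ or $n-1$ already makes $v$ removable in $G$ itself, so no passage to the complement is needed. Your route is a valid alternative; the paper's is just a line shorter.
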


\begin{proof}
Suppose $G$ has a vertex $v$ of degree $0,1,n-2,$ or $n-1$. Since $G$ is a minimal non-MTG, $G - v$ is mock threshold, and hence has an MT-ordering. Adding $v$ as the last vertex in the MT-ordering, we get an MT-ordering for G, a contradiction. 
\end{proof}

We call a vertex in a graph co-divalent if it is non-adjacent to exactly two vertices. 

\begin{lemma}\label{DIVALENTORCODIVALENT}
Every vertex in $G$ is either adjacent to a divalent vertex or non-adjacent to a co-divalent vertex.
\end{lemma}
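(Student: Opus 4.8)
The plan is to argue by contradiction: suppose $v$ is a vertex of $G$ that is neither adjacent to a divalent vertex nor non-adjacent to a co-divalent vertex, and derive a contradiction with the minimality of $G$. Since $G$ is a minimal non-MTG, $G-v$ is mock threshold, so it has an MT-ordering $u_1, u_2, \ldots, u_{n-1}$. The natural strategy is to try to insert $v$ somewhere into this ordering to obtain an MT-ordering of $G$, which would contradict $G$ being non-mock threshold. Putting $v$ last fails exactly because $v$ has degree strictly between $1$ and $n-3$ in $G$ (by Lemma \ref{DEGREEBOUND}, since $G$ is not a cycle or its complement and $n > 5$), so the real content is that inserting $v$ \emph{earlier} must also fail, and the failure of every insertion point is what forces the dichotomy in the statement.

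The key step is to understand what happens when we place $v$ immediately before $u_i$ in the ordering, for each $i$. In the resulting ordering of $G$, every vertex $u_j$ with $j < i$ has the same left-neighborhood as before, so its degree-in-prefix condition is unaffected. The vertex $v$ sits at position $i$ and sees its neighbors among $\{u_1, \ldots, u_{i-1}\}$; to satisfy the MT condition at $v$ we need $v$ to have few or co-few neighbors there. And each $u_j$ with $j \geq i$ now has one extra predecessor (namely $v$, if $v \sim u_j$) pushing its position up by one; its MT value among the first $j+1$ vertices of the new ordering is its old value (possibly $+1$ if $v\sim u_j$) relative to a prefix that is one longer. The point is that for $j \geq i$ the ``small degree'' alternatives $0, 1$ and the ``large degree'' alternatives $i-2, i-1$ all remain available provided $v$'s adjacency to $u_j$ is the right kind: a vertex that was a leaf in the old ordering stays a leaf if $v \not\sim u_j$ and becomes ``degree $2$'' otherwise, etc. So the obstruction to inserting $v$ just before $u_i$ is localized: it is caused either by $v$ itself having the wrong number of neighbors in the prefix, or by some single vertex $u_j$ ($j \geq i$) whose old MT value was $1$ or $i-2$ (relative to its own prefix) and whose adjacency to $v$ flips it out of the allowed set.

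The main obstacle — and the heart of the argument — is to extract, from the failure of \emph{all} insertion points simultaneously, the clean conclusion that $v$ must be adjacent to a divalent vertex or non-adjacent to a co-divalent vertex. I expect the argument to go roughly as follows: consider inserting $v$ as late as possible, and track which vertex $u_j$ creates the obstruction. If the obstruction always comes from a vertex whose old left-degree was exactly $1$ (a leaf at the moment it was added) and which is adjacent to $v$, then — pushing this analysis to the position of that vertex and using that $G-v$ is mock threshold — one identifies a neighbor of $v$ that has degree $2$ in some induced subgraph, and then by minimality/heredity one upgrades this to an actual divalent vertex of $G$ adjacent to $v$; the complementary case, where the obstruction comes from a co-leaf ($i-2$ value) non-adjacent to $v$, yields a co-divalent vertex of $G$ non-adjacent to $v$ by the complementary reasoning (using Proposition \ref{CLOSEDUNDERCOMPLEMENTATION}). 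The delicate bookkeeping is in verifying that when no such obstructing vertex exists, $v$ can in fact be inserted, contradicting $G \notin \MT$; this requires checking that the allowed degree set $\{0, 1, i-2, i-1\}$ is robust enough under the single shift caused by adding $v$. I would organize this by first handling, via Proposition \ref{CLOSEDUNDERCOMPLEMENTATION}, the symmetry between the two cases, and then doing the insertion analysis only for the ``small degree'' side.
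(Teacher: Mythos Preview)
Your plan is headed in a far more complicated direction than necessary, and the sketch you give (``I expect the argument to go roughly as follows'', ``one upgrades this to an actual divalent vertex'', ``delicate bookkeeping'') is not close to a proof. The insertion analysis you propose --- tracking obstructions at every position and arguing about leaves versus co-leaves along the ordering --- is not needed at all.

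The paper's argument is a two-line application of Lemma~\ref{DEGREEBOUND}, and you already have every ingredient in hand. You correctly observe that $G-v$ is mock threshold; hence $G-v$ has a \emph{removable} vertex $w$ (for instance, the last vertex $u_{n-1}$ of your MT-ordering), meaning $\deg_{G-v}(w)\in\{0,1,n-3,n-2\}$. But Lemma~\ref{DEGREEBOUND} forces $2\le \deg_G(w)\le n-3$, so in $G-v$ we have $1\le \deg_{G-v}(w)\le n-3$, leaving only $\deg_{G-v}(w)\in\{1,n-3\}$. If $\deg_{G-v}(w)=1$ then $\deg_G(w)=2$ forces $v\sim w$, so $w$ is divalent in $G$ and adjacent to $v$. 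If $\deg_{G-v}(w)=n-3$ then $\deg_G(w)\le n-3$ forces $v\not\sim w$, so $w$ has degree $n-3$ in $G$, i.e.\ $w$ is co-divalent and non-adjacent to $v$. That is the whole proof.

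In short: you do not need to insert $v$ anywhere. Just look at any removable vertex of $G-v$ and let the degree bounds do the work.
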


\begin{proof}
 Let $v$ be a vertex in $G$. Since $G$ is a minimal non-MTG, $G - v$ is mock threshold, and hence has a vertex $w$ that has degree $0,1,n-3$, or $n-2$. By Lemma \ref{DEGREEBOUND}, $w$ has degree at least $2$ and at most $n-3$ in $G$, and hence at least $1$ and at most $n-3$ in $G - v$. If $w$ has degree $1$ in $G$, then $v$ must be adjacent to $w$, since by the Lemma \ref{DEGREEBOUND} $w$ must have degree at least $2$ in $G$. Hence $v$ is adjacent to the divalent vertex $w$. If $w$ has degree $n-3$, then since the degree of $w$ in $G$ is at most $n-3$, $v$ is non-adjacent to $w$, and the degree of $w$ in $G - v$ is $n-3$. Hence $v$ is non-adjacent to the co-divalent vertex $w$.   
\end{proof}

\begin{lemma}\label{3OR4CYCLE}
Every vertex in $G$ is in an induced cycle of length $3$ or $4$.
\end{lemma}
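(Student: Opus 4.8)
The plan is to combine the two degree-type lemmas already established. Fix a vertex $v$ in $G$. By Lemma \ref{DIVALENTORCODIVALENT}, either $v$ is adjacent to a divalent vertex $w$ (a vertex of degree exactly $2$), or $v$ is non-adjacent to a co-divalent vertex $w$ (a vertex non-adjacent to exactly two vertices, i.e.\ of degree $n-3$). Since the class of minimal non-MTGs is closed under complementation (Proposition \ref{CLOSEDUNDERCOMPLEMENTATION} applied to $\Forb(\MT)$) and ``$v$ lies in an induced $C_3$ or $C_4$ of $G$'' translates under complementation to ``$v$ lies in an induced $\overline{C_3}=3K_1$ or $\overline{C_4}=2K_2$ of $\overline G$'' — which is not the same statement — I should instead handle the two cases directly rather than reducing one to the other; but the co-divalent case will be symmetric in flavor, so I would write it out in the divalent case and then indicate the dual argument.

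So suppose first that $v$ is adjacent to a divalent vertex $w$, say $N(w)=\{v,x\}$. If $v$ is adjacent to $x$, then $\{v,w,x\}$ induces a triangle $C_3$ containing $v$, and we are done. If $v$ is non-adjacent to $x$, I need to find a fourth vertex closing a $C_4$ through $v$ and $w$; the natural candidate is any common neighbor of $v$ and $x$ other than $w$. Here is where I would use that $G$ is a \emph{minimal} non-MTG and not a cycle: by Lemma \ref{DEGREEBOUND}, $d(x)\ge 2$, so $x$ has a neighbor $y\ne w$. If $y$ is adjacent to $v$ and not to $w$, then $v,w,x,y$ induce a $C_4$. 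The remaining subcases — $y$ adjacent to $w$, or $y$ non-adjacent to $v$ — need a little more care: if every neighbor of $x$ other than $w$ is either adjacent to $w$ or nonadjacent to $v$, I would trace out the local structure and argue that $w$ together with $v$, $x$ and such a $y$ still forces an induced $4$-cycle somewhere through $v$, or else produce a removable vertex or a small forbidden configuration contradicting minimality. The co-divalent case is the complementary argument: $w$ is non-adjacent to exactly $\{v',x'\}$ for some vertices $v',x'$ with $v'$ possibly equal to... — more precisely, $v$ is one of the two non-neighbors of $w$, and running the previous paragraph in $\overline G$ produces an induced $2K_2$ or $3K_1$ of $\overline G$ through $v$, equivalently an induced $C_4$ or $C_3$ of $G$ through $v$.

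The main obstacle I anticipate is the ``both bad'' subcase in the divalent analysis: ruling out the possibility that $v$, though adjacent to the divalent vertex $w$, still fails to sit in any short induced cycle because all of $x$'s other neighbors behave adversarially. I expect this is exactly where the hypothesis that $G$ is not a cycle (so it has a vertex of degree $\ge 3$, by Lemma \ref{DEGREEBOUND} and the fact that $2$-regular graphs are disjoint unions of cycles, and by Lemma \ref{DISJOINTCYCLES} a single cycle) gets used, perhaps via picking $x$ or $w$ to be adjacent to such a high-degree vertex, or by invoking Proposition \ref{SMALLGRAPHS} to dispose of the small cases and then arguing that a long induced path hanging off $v$ would make $v$ or a nearby vertex removable. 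I would also double-check that the statement is only claimed for the $G$ of the running hypothesis (minimal non-MTG, not a cycle or its complement, $n>5$), since it is false for $K_n$ and for forests.
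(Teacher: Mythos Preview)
Your approach is far more complicated than necessary and has a genuine error in the co-divalent half.

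The paper's argument is a three-line affair that does not use Lemma~\ref{DIVALENTORCODIVALENT} at all. Since $\delta(G)\ge 2$ (Lemma~\ref{DEGREEBOUND}), every vertex $v$ lies on some cycle; take a \emph{shortest} cycle $C$ through $v$. A shortest cycle through a vertex is automatically induced (any chord would shorten it). Now $G$ contains no induced $C_k$ with $k\ge 5$: such a $C_k$ is not mock threshold (Proposition~\ref{HOLESANDANTIHOLES}), so by minimality of $G$ it could only appear if $G=C_k$ itself, which is excluded by hypothesis. Hence $|C|\in\{3,4\}$. That is the whole proof; the local case analysis around a divalent neighbor is not needed.

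On your proposal specifically: the complementation step at the end is wrong, and you had already flagged the reason. In the co-divalent case you pass to $\overline G$, where $v$ is now adjacent to a divalent vertex $w$; running the divalent argument there would yield that $v$ lies in an induced $C_3$ or $C_4$ \emph{of $\overline G$}. Translating back to $G$, that says $v$ lies in an induced $3K_1$ or $2K_2$ of $G$, which is not the conclusion you need. So the two cases are not dual under complementation, and the co-divalent branch would require its own direct argument. Combined with the admittedly unresolved ``both bad'' subcase in the divalent branch, the proposal does not close. The fix is simply to abandon this route and use the shortest-cycle argument above.
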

\begin{proof}
Let $v$ be a vertex in $G$. By Lemma \ref{DEGREEBOUND}, $G$ has minimum degree at least $2$, and hence has a cycle passing through $v$. Let $C$ be a smallest cycle passing through $v$. Then $C$ is an induced subgraph of $G$. Since $G$ is a minimal non-MTG and $n > 5$, $G$ does not contain an induced cycle of length at least $5$. Thus $C$ has length $3$ or $4$. 
\end{proof}

\begin{lemma}\label{ABSENCEOFTRIPLET}
There do not exist three divalent vertices with the same neighborhood.
\end{lemma}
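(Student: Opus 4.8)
The plan is a proof by contradiction. Suppose $G$ has three divalent vertices $u_1,u_2,u_3$ with $N(u_1)=N(u_2)=N(u_3)=\{a,b\}$; in particular $u_1,u_2,u_3,a,b$ are five distinct vertices and the $u_i$ are pairwise non-adjacent. Since $G$ is a minimal non-MTG, $H:=G-u_1$ is mock threshold; fix an MT-ordering $\sigma=v_1,\dots,v_{n-1}$ of $H$, in which $u_2,u_3$ are still divalent with neighborhood $\{a,b\}$. I would build an MT-ordering of $G$ by inserting $u_1$ suitably into $\sigma$, contradicting $G\notin\MT$. The obstruction to a naive insertion is the ``shift'' phenomenon: placing $u_1$ into $\sigma$ moves every later vertex one position to the right, which ruins any near-dominating or dominating vertex lying after the insertion point. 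Two observations let me control this. (i) If both $a$ and $b$ precede a twin $u_i$ in $\sigma$, then $u_i$ has degree $2$ in $H{:}\{v_1,\dots,v_j\}$ where $v_j=u_i$, so the MT-condition forces $j\in\{3,4\}$. (ii) Any vertex $v_k\notin\{a,b\}$ is non-adjacent to both $u_2$ and $u_3$, so once $u_2$ and $u_3$ precede it, its degree in $H{:}\{v_1,\dots,v_k\}$ is at most $k-3$, and hence, by the MT-condition, at most $1$. I would then split into cases according to whether some twin has both $a$ and $b$ before it in $\sigma$.

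Case A: some twin, which after possibly swapping $u_2$ and $u_3$ we may take to be $u_2$, has both $a,b$ before it. By (i), $u_2\in\{v_3,v_4\}$, and a short argument applying (i) again to $u_3$ (and using that $u_3$ is divalent with neighborhood $\{a,b\}$) pins down $\{v_1,v_2,v_3,v_4\}=\{a,b,u_2,u_3\}$. Then by (ii) every $v_k$ with $k\ge5$ has degree $\le1$ at its position; in particular $v_5$ is adjacent to at most one of $a,b$. Now $u_1,u_2,a,u_3,b,v_5,v_6,\dots,v_{n-1}$ is an MT-ordering of $G$: the first five insertions have degrees $0,0,2,1$, and $3$ or $4$, all legal at positions $1$ through $5$; and each later $v_k$ is non-adjacent to $u_1,u_2,u_3$, so its degree in the new prefix equals its degree in $\sigma$, which is $\le1$.

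Case B: no twin has both $a,b$ before it. Label so that $a$ precedes $b$ in $\sigma$, at positions $p_a<p_b$. Then neither twin can follow both $a$ and $b$, so $u_2$ and $u_3$ precede $b$; thus $a,u_2,u_3$ are three distinct vertices before $b$, whence $p_b\ge4$, and since $b\sim u_2,u_3$ the degree of $b=v_{p_b}$ in $\sigma$ is $\ge2$ and therefore equals $p_b-2$ or $p_b-1$. Insert $u_1$ immediately before $b$ in $\sigma$. This is an MT-ordering of $G$: positions before $p_b$ are untouched; $u_1$ enters with degree $1$ (only $a$ precedes it); $b$ moves to position $p_b+1$ but its degree rises by exactly $1$, to $p_b-1$ or $p_b$, which is legal; and each $v_k$ with $k>p_b$ is non-adjacent to $u_1$ (it is not $a$ or $b$) and to $u_2,u_3$, so its degree at position $k$ in $\sigma$ was $\le k-3$, hence $\le1$, and it stays legal at position $k+1$. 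The only delicate steps are the identification of the prefix $\{v_1,v_2,v_3,v_4\}$ in Case A and the claim that $b$'s degree grows by exactly one in Case B; everything else is a routine verification of the MT-condition.
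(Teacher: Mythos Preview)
Your proof is correct and follows essentially the same strategy as the paper: delete one of the three twins, take an MT-ordering of the remaining graph, and insert the deleted twin in a suitable spot to obtain an MT-ordering of $G$, yielding the desired contradiction. Your case split (according to whether some twin has both $a$ and $b$ before it) is phrased differently from the paper's (which compares the position of the ``highest'' common neighbor with that of the ``highest'' remaining twin), and your observation (ii) makes explicit the key reason the shift does not spoil later vertices---a point the paper leaves implicit---but the underlying argument is the same.
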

\begin{proof}
Let $x,y,z$ be divalent vertices with the same neighborhood $\{a,b\}$. Since $G$ is a minimal non-MTG, $G-z$ is mock threshold, and hence has an MT-ordering. Without loss of generality we may assume $a$ was above $b$, and $x$ was above $y$ in the ordering. 
Suppose $a$ was above $x$ in the MT-ordering of $G-z$. Then placing $z$ immediately above $x$ gives an MT-ordering of $G$. Suppose $x$ was above $a$ in the MT-ordering of $G-z$. Then placing $z$ as the first vertex followed by the MT-ordering of $G-v$ gives an MT-ordering of $G$. In either case, we get an MT-ordering of $G$, a contradiction. 
\end{proof}

If the neighbors of a divalent vertex are adjacent, we say that the divalent vertex is of triangle-type. If the neighbors of a divalent vertex are non-adjacent, we say that the divalent vertex is of seagull-type.

\begin{lemma}\label{COMMONNEIGHBOR}
The neighbors of a seagull-type divalent vertex have another common neighbor.
\end{lemma}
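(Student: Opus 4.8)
The plan is to let $v$ be a seagull-type divalent vertex with neighbors $a$ and $b$, so that $a$ and $b$ are non-adjacent, and to show $a$ and $b$ have a common neighbor other than $v$. First I would apply minimality: since $G$ is a minimal non-MTG, $G-v$ is mock threshold and hence has an MT-ordering $u_1,\ldots,u_{n-1}$. Now consider where one could insert $v$ back into this ordering. If $v$ is placed in position $i$ (so $G{:}\{u_1,\ldots,u_{i-1},v\}$ has $i$ vertices), then for the insertion to fail to produce an MT-ordering of $G$, the degree of $v$ in that induced subgraph must be $2,3,\ldots,i-3$ — in particular at least $2$, which forces both $a$ and $b$ to lie among $u_1,\ldots,u_{i-1}$, and also the degree must be at most $i-3$, which means at least two of the first $i-1$ vertices are non-neighbors of $v$, i.e. $i-1 \geq 4$. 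Since every position is bad, in particular placing $v$ immediately after the later of $a,b$ is bad; say $b$ comes after $a$ in the ordering, and let $b = u_k$. Placing $v$ right after $u_k$ gives $v$ degree exactly $2$ in the resulting $(k+1)$-vertex subgraph, which is an allowed degree unless $k+1 \le 3$ is violated the wrong way — wait, degree $2$ is allowed when $k+1-2 = k-1 \ge $ something; degree $2 = i-2$ with $i = k+1$ means $i-2 = k-1 = 2$, i.e. $k=3$; more to the point degree $2$ is fine as long as $2 \in \{0,1,i-2,i-1\}$, which fails precisely when $2 \le i-3$, i.e. $i \ge 5$. So the obstruction tells us $k+1 \ge 5$, and also that $v$ cannot validly be inserted anywhere; I'd then extract structural information from the positions just after $a$ and just before $b$.

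The sharper idea: insert $v$ immediately \emph{before} $b=u_k$ in the ordering instead. Then in $G{:}\{u_1,\ldots,u_{k-1},v\}$, the vertex $v$ is adjacent only to $a$ (since $b$ is not yet present), so $\deg v = 1$, which is always allowed — that would give an MT-ordering of $G$, a contradiction, \emph{unless} this reordering destroys the MT-property for some later vertex, namely $b$ itself or vertices above it. Moving $v$ from wherever it was to just before $b$ changes, for each vertex $u_j$ with $j \ge $ (new position of $v$), the count of earlier vertices by one (it gains $v$ as a predecessor if $v$ was originally after $u_j$). The only vertex whose degree-in-initial-segment changes is a vertex adjacent to $v$, i.e. $a$ or $b$; and $a$'s position relative to $v$ — here is where I have to be careful about whether $v$ was originally above or below $a$. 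The clean approach is: WLOG by re-choosing we may assume $v$ was originally the bottom vertex is false since $v\notin$ ordering; rather I take the ordering of $G-v$ and consider the two candidate insertions "just above $a$" and "just above $b$", and argue that at least one works unless $b$ (the higher neighbor) has, among the vertices below it in the ordering, enough structure to block it — specifically that when we insert $v$ just below $b$, the vertex $b$ goes from degree $d$ in its initial segment of size $m$ to degree $d+1$ in size $m+1$; $d \in \{0,1,m-2,m-1\}$ originally, and we need $d+1 \in \{0,1,(m+1)-2,(m+1)-1\} = \{0,1,m-1,m\}$. This holds automatically when $d = 1$ (giving $2$? no — $2 \notin\{0,1,m-1,m\}$ unless $m \le 3$), when $d = m-2$ (giving $m-1$, allowed) or $d = m-1$ (giving $m$, allowed); it fails when $d=0$ (giving $1$, allowed actually) — so the only genuine failure is $d=1$ with $m \ge 4$, meaning $b$ was a \emph{leaf} in its initial segment. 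If $b$ is a leaf there, its unique neighbor among earlier vertices is $a$ (since $v$ wasn't there), so $b$ has degree $2$ in $G{:}\{u_1,\ldots,u_k\}$ with neighbors $a$ and $v$... but wait $v$ is being inserted; in $G{:}\{u_1,\ldots,u_k\}$ vertex $b$'s only neighbor is $a$. Hmm, then $b$ has degree $1$ there which equals $d$; fine. I need to then track what $b$'s neighbors are in all of $G$: its full neighborhood includes $v$, $a$, and everything after it, and the constraint is on $G{:}\{u_1,\dots,u_k\}$.

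So the heart of the argument is a case analysis on the position of $a$ and $b$ in the MT-ordering of $G-v$ and on the type (isolated/leaf/near-dominating/dominating) of $a$ and $b$ in their respective initial segments; in each case I either directly insert $v$ to get an MT-ordering of $G$ (contradiction), or I deduce that $a$ or $b$ has exactly one neighbor in the relevant initial segment. When that forced neighbor is forced to be distinct from $v$ and adjacent to the \emph{other} of $a,b$, we obtain the common neighbor. Concretely: the obstruction to inserting $v$ just below the upper neighbor $b=u_k$ forces $b$ to be near-dominating or dominating or a leaf in $\{u_1,\ldots,u_{k-1}\}\cup\{v\}$ after reindexing, and the obstruction to inserting $v$ just below $a = u_\ell$ ($\ell < k$) forces a symmetric condition on $a$; combining these with Lemma~\ref{ABSENCEOFTRIPLET} and Lemma~\ref{DEGREEBOUND} (which bound how near-dominating a vertex can be), I expect to be pushed into the configuration where some vertex $u_j$ with $\ell < j$ is adjacent to both $a$ and $b$ — and $u_j \ne v$ since $v$ was removed — giving the desired common neighbor. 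The main obstacle will be the bookkeeping of the "leaf" subcase: ruling out that the blocking vertex $b$'s unique earlier-neighbor is $a$ and nothing else gives a common neighbor — here I'd likely have to look one step further down, inserting $v$ two positions lower or swapping $a$ past $v$, and use that $a$ itself is divalent-adjacent or co-divalent-non-adjacent via Lemma~\ref{DIVALENTORCODIVALENT}; alternatively, invoke that $v$ being seagull-type and Lemma~\ref{3OR4CYCLE} applied to $v$ already says $v$ lies in an induced $C_4$, whose fourth vertex is exactly a second common neighbor of $a$ and $b$. In fact that last observation may be the entire proof: by Lemma~\ref{3OR4CYCLE} the seagull-type divalent vertex $v$ lies in an induced $C_3$ or $C_4$; it cannot be a $C_3$ since its two neighbors $a,b$ are non-adjacent; so $v$ is in an induced $C_4$, say $v\,a\,w\,b\,v$, and $w$ is the required second common neighbor of $a$ and $b$. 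I would present this short route as the proof, using the MT-ordering argument above only as a fallback if the referee wants the self-contained computation.
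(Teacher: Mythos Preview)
Your final proposed proof---apply Lemma~\ref{3OR4CYCLE} to $v$, rule out $C_3$ since $v$ is seagull-type, and read off the fourth vertex of the resulting induced $C_4$ as the common neighbor---is exactly the paper's proof. The long MT-ordering detour preceding it is unnecessary and should be discarded; present only the short argument.
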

\begin{proof}
Let $v$ be a divalent vertex in $G$ of seagull type. By Lemma \ref{3OR4CYCLE}, $v$ is a cycle of length $3$ or $4$. Since $v$ is of seagull type, it cannot be in a $3$-cycle. Hence $v$ is in a $4$-cycle i.e., the two neighbors of $v$ have another common neighbor.
\end{proof}

\begin{lemma}\label{FULLSET}
If $X$ is a set of vertices in $G$ such that every vertex in $X$ has at least two neighbors in $X$ and at least two non-neighbors in $X$, then $X = V(G)$.
\end{lemma}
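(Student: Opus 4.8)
The plan is to apply Lemma \ref{MINDEGREEMAXDEGREE} to the induced subgraph $H := G{:}X$ and then play it off against the minimality of $G$. The first step is to translate the hypothesis on $X$ into degree bounds on $H$. Each vertex of $X$ has at least two neighbours in $X$, so $\delta(H) \ge 2$; and each vertex of $X$ has at least two non-neighbours in $X$ (other than itself), so its degree in $H$ is at most $(|X|-1)-2 = |X|-3$, whence $\Delta(H) \le |X|-3$. Note that, as long as $X$ is nonempty, these two inequalities already force $|X| \ge 1+2+2 = 5$, so the hypotheses of Lemma \ref{MINDEGREEMAXDEGREE} are genuinely satisfied by $H$, and that lemma tells us $H$ is \emph{not} mock threshold.

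The second step is to invoke minimality. Suppose, for contradiction, that $X \ne V(G)$. Then $H = G{:}X$ is a \emph{proper} induced subgraph of $G$, and since $G$ is a minimal non-MTG, every proper induced subgraph of $G$ is mock threshold; in particular $H$ is mock threshold. This contradicts the conclusion of the first step, so $X = V(G)$, as claimed.

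There is no real obstacle here: the lemma falls out immediately from Lemma \ref{MINDEGREEMAXDEGREE} together with the definition of a minimal non-MTG. The only points that deserve a moment's attention are the bookkeeping showing that ``at least two non-neighbours in $X$'' gives $\Delta(H) \le |X|-3$ (not merely $|X|-2$), the observation that the hypothesis makes $H$ big enough ($|X|\ge 5$) for Lemma \ref{MINDEGREEMAXDEGREE} to have content, and the tacit convention that $X$ is nonempty — which holds in every place the lemma is used.
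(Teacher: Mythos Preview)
Your proof is correct and follows exactly the paper's approach: apply Lemma~\ref{MINDEGREEMAXDEGREE} to $G{:}X$ to see it is not mock threshold, then invoke the minimality of $G$. You have simply made explicit the degree bookkeeping and the $|X|\ge 5$ check that the paper leaves implicit.
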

\begin{proof}
By Lemma \ref{MINDEGREEMAXDEGREE}, the induced subgraph $G{:}X$ is not mock threshold. By the minimality of $G$, $X = V(G)$. 
\end{proof}

A vertex set satisfying the condition in Lemma \ref{FULLSET} is called a \emph{full set}. The reason is that in a minimal non-MTG, the vertex set is the only full set.

\begin{lemma}\label{DIVALENTVERTICES}
The total number of divalent and co-divalent vertices in $G$ is at least $\frac{n}{2}$.
\end{lemma}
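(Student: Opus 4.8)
The plan is to show that a minimal non-MTG $G$ cannot have too many vertices that are neither divalent nor co-divalent, by producing a forbidden configuration. Call a vertex \emph{ordinary} if its degree $d(v)$ satisfies $2 \le d(v)$ and $2 \le \bard(v)$, i.e. it is neither divalent nor co-divalent; by Lemma \ref{DEGREEBOUND} every vertex of $G$ has degree at least $2$ and codegree at least $2$, so every non-(co-)divalent vertex is ordinary. Let $A$ be the set of divalent vertices, $B$ the set of co-divalent vertices, and $D$ the set of ordinary vertices, so $V(G) = A \cup B \cup D$ (these are disjoint since a vertex cannot be both divalent and co-divalent once $n \ge 5$). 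We want $|A| + |B| \ge n/2$, equivalently $|D| \le n/2$. I would argue by contradiction: assume $|D| > n/2$, hence $|D| > |A| + |B|$.

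The key idea is that $D$ is "almost" a full set in the sense of Lemma \ref{FULLSET}, and we should be able to enlarge it a little to get an actual full set that is a proper subset of $V(G)$, contradicting minimality. Each ordinary vertex $v \in D$ has at least two neighbors and at least two non-neighbors in $V(G)$; the obstruction to $D$ being full is that some of those neighbors (or non-neighbors) might lie outside $D$, in $A \cup B$. The plan is to bound how many vertices of $D$ can be "spoiled" in this way. A vertex $v \in D$ fails to have two neighbors inside $D$ only if at least $d(v) - 1 \ge 1$ of its neighbors lies in $A \cup B$; similarly it fails to have two non-neighbors inside $D$ only if at least one of its non-neighbors lies in $A \cup B$. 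Using that each vertex in $A$ has only two neighbors (so lies in $N(v)$ for at most two $v$, and is a non-neighbor more freely) and dually for $B$ with respect to non-neighbors, together with Lemmas \ref{ABSENCEOFTRIPLET}, \ref{COMMONNEIGHBOR}, and the structural restrictions on how divalent/co-divalent vertices attach, I would count the spoiled vertices of $D$ and show their number is strictly less than $|D|$ when $|D| > |A|+|B|$. Deleting the spoiled vertices from $D$ (or more carefully, taking the set of ordinary vertices all of whose relevant neighbors/non-neighbors stay inside) yields a nonempty proper subset of $V(G)$ on which the full-set condition holds, contradicting Lemma \ref{FULLSET}.

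The main obstacle I anticipate is making the counting of "spoiled" ordinary vertices tight enough: the naive bound (each vertex of $A$ spoils at most two ordinary neighbors, each vertex of $B$ spoils at most two ordinary non-neighbors) gives roughly $|D_{\text{spoiled}}| \le 2(|A|+|B|)$, which is too weak by a factor of $2$ to contradict $|D| > |A|+|B|$ directly. So the real work is to show that an ordinary vertex typically fails \emph{both} conditions or that the spoiling is much more restricted — for instance, using Lemma \ref{COMMONNEIGHBOR} (each seagull-type divalent vertex forces a $C_4$, pinning down the other common neighbor) and Lemma \ref{ABSENCEOFTRIPLET} (no three divalent vertices share a neighborhood) to get amortized bounds, and dually for co-divalent vertices via the complement. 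I would expect to need a careful case split on triangle-type versus seagull-type divalent vertices (and their complements) and possibly a weight/discharging argument, ultimately leveraging Lemma \ref{MINDEGREEMAXDEGREE} on a well-chosen induced subgraph. If the factor-of-two gap proves genuinely hard to close in full generality, a fallback is to prove the weaker bound that suffices for the application of this lemma later in the paper, but the statement as given should follow from pushing the structural lemmas on divalent vertices hard enough.
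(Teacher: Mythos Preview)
Your approach misses the point entirely and leaves the main step unproved. The lemma follows in two lines from Lemma~\ref{DIVALENTORCODIVALENT}, which you cite nowhere: every vertex $v$ is adjacent to some divalent vertex or non-adjacent to some co-divalent vertex. Assign to each $v$ one such witness in $A\cup B$. A divalent vertex has exactly two neighbors, so it can be the witness for at most two vertices; dually a co-divalent vertex has exactly two non-neighbors. Hence $n \le 2|A\cup B|$, which is the claim.

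Your full-set/discharging plan is not only far more elaborate than needed, it is also not a proof as written: you explicitly acknowledge a factor-of-two gap in the key counting step and offer no concrete way to close it. The ``remove the spoiled ordinary vertices'' idea is unstable, since deleting a spoiled vertex from $D$ can spoil further vertices (they may lose neighbors or non-neighbors inside the set), so there is no reason the process terminates with a nonempty full proper subset. Moreover, invoking Lemmas~\ref{ABSENCEOFTRIPLET} and~\ref{COMMONNEIGHBOR} does not obviously improve the count: those lemmas constrain how divalent vertices sit relative to one another, not how many ordinary vertices each one can spoil. The proposal is a strategy outline with an admitted obstacle, not an argument; and the obstacle dissolves once you use Lemma~\ref{DIVALENTORCODIVALENT} directly.
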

\begin{proof}
Lemma \ref{DIVALENTORCODIVALENT} says that every vertex in a minimal non-MTG is adjacent to a divalent vertex or non-adjacent to a co-divalent vertex. Hence the number of vertices in $G$ is at most twice the number of vertices that are either divalent or co-divalent.
\end{proof}

\begin{lemma}\label{NOTCONNECTED}
If $G$ is not connected, then $G$ has at most $8$ vertices. 
\end{lemma}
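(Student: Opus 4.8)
The plan is to use Lemma~\ref{MINDEGREEMAXDEGREE} as an obstruction that is tailor-made for disjoint unions. Recall the standing hypotheses: $G$ is a minimal non-MTG that is neither a cycle nor the complement of one, so $n>5$. By Lemma~\ref{DEGREEBOUND} we have $\delta(G)\ge 2$, hence every component of $G$ has at least $3$ vertices, and there are at least two components. The key observation is this: if $H$ is an induced subgraph of $G$ that is the disjoint union of $\ell\ge 2$ induced subgraphs each having at least $3$ vertices and minimum degree at least $2$, then, writing $N=|V(H)|$, each part has at most $N-3$ vertices (the remaining parts account for at least $3$), so $\Delta(H)\le N-4\le N-3$ while $\delta(H)\ge 2$; by Lemma~\ref{MINDEGREEMAXDEGREE}, $H$ is not mock threshold, and therefore, by minimality of $G$, $H$ must be all of $G$.

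First I would pin down the number of components. If $G$ had at least three components $C_1,C_2,C_3,\dots$, then $C_1\sqcup C_2$ would be such a disjoint union (two parts, each with $\ge 3$ vertices and $\delta\ge 2$) but a \emph{proper} induced subgraph of $G$, contradicting the key observation. Hence $G=C_1\sqcup C_2$ with $|C_1|,|C_2|\ge 3$. Next I would show each $C_i$ is a cycle. Suppose, say, $C_1$ is not a cycle. Being connected with $\delta\ge 2$, it then has at least $4$ vertices and contains a cycle that is not spanning---for instance a shortest cycle $Z$, since if a shortest cycle spanned $V(C_1)$ it would (as $|V(C_1)|\ge 4$ and $C_1\ne Z$) have a chord, producing a strictly shorter cycle. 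Pick a vertex $x\in V(C_1)\setminus V(Z)$ and let $C_1'$ be the $2$-core of $C_1-x$; since every vertex of $Z$ keeps its two $Z$-neighbours throughout the $2$-core reduction, $Z$ survives, so $C_1'$ is nonempty, has at least $3$ vertices and $\delta(C_1')\ge 2$, and is a proper induced subgraph of $C_1$. Then $C_1'\sqcup C_2$ is again a disjoint union of parts each with $\ge 3$ vertices and $\delta\ge 2$, and it is a proper induced subgraph of $G$ (it omits $x$)---contradicting the key observation. So $C_1$, and likewise $C_2$, is a cycle.

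Finally, since $G$ is a minimal non-MTG that is not itself a cycle, it contains no induced $C_m$ with $m\ge 5$: such a $C_m$ is not mock threshold by Proposition~\ref{HOLESANDANTIHOLES}, so by minimality it would have to equal $G$, contradicting that $G$ is not a cycle. Therefore $|C_1|,|C_2|\in\{3,4\}$ and $n=|C_1|+|C_2|\le 8$. The only genuine (and minor) obstacle is the elementary graph-theory fact invoked in the second paragraph, that a connected graph with $\delta\ge 2$ that is not a cycle contains a non-spanning cycle; everything else is a direct application of Lemma~\ref{MINDEGREEMAXDEGREE} together with minimality of $G$. (One expects the extremal example to be $C_4\sqcup C_4$, and the disconnected minimal non-MTGs to be exactly $C_3\sqcup C_3$, $C_3\sqcup C_4$, and $C_4\sqcup C_4$.)
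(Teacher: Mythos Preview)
Your proof is correct and rests on the same two ingredients as the paper's---Lemma~\ref{MINDEGREEMAXDEGREE} applied to a disjoint union, together with minimality of $G$ and the absence of induced cycles of length~$\geq 5$---but your route is more circuitous than necessary. The paper does not first argue that there are exactly two components, nor that each component is itself a cycle. Instead it picks two components $H_1,H_2$, takes a \emph{shortest} cycle $C_i$ in each (automatically induced, hence of length $3$ or $4$), and observes that $C_1\cup C_2$ is already a non-MTG induced subgraph; minimality then forces $G=C_1\cup C_2$ in one stroke. Your detour through ``each component is a cycle'' (via the $2$-core of $C_1-x$) is valid but avoidable: the shortest-cycle trick gives the small witness directly, without needing to analyse the global structure of the components.
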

\begin{proof}
Let $H_1$ and $H_2$ be two components of $G$. Since the minimum degree of a vertex in $G$ is at least $2$, $G$ has a cycle in each component. Let $C_i$ be a smallest cycle (and therefore chordless) in $H_i$ ($i=1,2$). Note that $|C_i| \in \{3,4\}$ since a cycle of length at least $5$ is a minimal non-MTG. 
Now $C_1 \cup C_2$ is a non-MTG, and therefore $G = C_1 \cup C_2$, and hence $G$ has at most $8$ vertices. 
\end{proof}

\begin{lemma} \label{ISTHMUS}
If $G$ has an isthmus, then $G$ has at most $8$ vertices. 
\end{lemma}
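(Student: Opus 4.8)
The plan is to imitate the proof of Lemma~\ref{NOTCONNECTED}, producing a small non-MTG inside $G$ and then invoking minimality. Let $e=xy$ be an isthmus, and let $A$ and $B$ be the two components of $G-e$, with $x\in V(A)$ and $y\in V(B)$. First I would argue that each of $A$ and $B$ contains a cycle. By Lemma~\ref{DEGREEBOUND} we have $\delta(G)\ge 2$, so $x$ is not isolated in $A$ (otherwise $\deg_G(x)=1$), and no vertex of $A$ other than $x$ is a leaf of $A$. Hence if $A$ were a tree it would have a leaf different from $x$, which would be a vertex of degree $1$ in $G$ — a contradiction. So $A$ contains a cycle, and likewise $B$.

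Next, let $C_1$ be a shortest cycle in $A$ and $C_2$ a shortest cycle in $B$. Each is chordless, and since all $G$-edges within $V(A)$ (resp.\ $V(B)$) lie in $A$ (resp.\ $B$) — the only edge of $G$ joining $V(A)$ and $V(B)$ being $e$ — both $C_1$ and $C_2$ are induced cycles of $G$. Because $G$ is a minimal non-MTG that is not a cycle, Proposition~\ref{HOLESANDANTIHOLES} shows $G$ has no induced cycle of length at least $5$, so $|V(C_1)|,|V(C_2)|\in\{3,4\}$. Now consider $H:=G{:}(V(C_1)\cup V(C_2))$. Since the only edge of $G$ between $V(A)$ and $V(B)$ is $e=xy$, the subgraph $H$ is either the disjoint union $C_1\cup C_2$ (when $x\notin V(C_1)$ or $y\notin V(C_2)$) or the union of $C_1$ and $C_2$ joined by the single edge $e$ (when $x\in V(C_1)$ and $y\in V(C_2)$).

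In the first case $H$ is a pair of disjoint cycles, hence not mock threshold by Lemma~\ref{DISJOINTCYCLES}. In the second case every vertex of $H$ has its cycle-degree $2$ except $x$ and $y$, whose degree becomes $3$, so $\delta(H)=2$ and $\Delta(H)=3$; since $|V(H)|=|V(C_1)|+|V(C_2)|\ge 6$ we get $\Delta(H)=3\le |V(H)|-3$, and Lemma~\ref{MINDEGREEMAXDEGREE} shows $H$ is not mock threshold. In either case $H$ is a non-MTG, so minimality of $G$ forces $G=H$, and therefore $|V(G)|=|V(C_1)|+|V(C_2)|\le 8$. I do not expect a genuine obstacle; the only point requiring care — and the place this argument diverges from Lemma~\ref{NOTCONNECTED} — is that the two short cycles need not form a disjoint (hence automatically non-MTG) configuration, because the isthmus may run between them, so one must fall back on Lemma~\ref{MINDEGREEMAXDEGREE} to certify that the joined configuration is still a non-MTG.
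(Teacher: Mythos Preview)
Your proof is correct and follows essentially the same approach as the paper's: both find short induced cycles $C_1,C_2$ on either side of the isthmus, observe that their union (possibly joined by $e$) is a non-MTG, and invoke minimality to conclude $G$ equals this union. The paper first cites Lemma~\ref{NOTCONNECTED} to reduce to the connected case, whereas you tacitly assume connectedness by speaking of ``the two components of $G-e$''; this is harmless, since your final minimality step forces $V(G)=V(C_1)\cup V(C_2)$ regardless, and you are in fact more explicit than the paper in justifying via Lemma~\ref{MINDEGREEMAXDEGREE} that the joined configuration $C_1\cup C_2 + e$ is non-MTG.
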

\begin{proof}
If $G$ is not connected then we are done by Lemma \ref{NOTCONNECTED}. Hence we may assume that $G$ is connected. Let $e=vw$ be an isthmus in $G$. Let the two components in $G - e$ be $H_1$ and $H_2$ where $v \in V(H_1)$ and $w \in V(H_2)$. Since every vertex in $G$ has minimum degree at least $2$, $H_1$ and $H_2$ each has a cycle. Let $C_i$ be a shortest cycle in $H_i$ ($i = 1,2$); then $|C_i| \in \{3,4\}$.

If $v \in V(C_1)$ and $w \in V(C_2)$, then $C_1 \cup C_2 + e$ is a non-MTG and hence is equal to $G$.  
Otherwise, $C_1 \cup C_2$ is a non-MTG and hence is equal to $G$.  
In both cases, $G$ has at most $8$ vertices. 
\end{proof}

\begin{lemma}\label{CUTVERTEX}
If $G$ has a cutvertex, then $G$ has at most $9$ vertices. 
\end{lemma}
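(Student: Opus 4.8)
The plan is to reduce to the case that $G$ is connected and bridgeless, choose a cutvertex $v$, analyse the components of $G-v$, and in every case exhibit a small induced subgraph that is not mock threshold, so that minimality of $G$ pins down $G$ itself.

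First I would dispose of the easy reductions. If $G$ is disconnected, Lemma~\ref{NOTCONNECTED} already gives $n\le 8$, and if $G$ has an isthmus, Lemma~\ref{ISTHMUS} gives $n\le 8$; so from here on $G$ is connected and $2$-edge-connected. Let $v$ be a cutvertex and let $A_1,\dots,A_k$ (with $k\ge2$) be the components of $G-v$; since $G$ is bridgeless, $v$ has at least two neighbours in each $A_i$. The next step is to prove that \emph{at most one of the $A_i$ contains a cycle}: a shortest cycle in a component is chordless, hence of length $3$ or $4$ (recall from Proposition~\ref{HOLESANDANTIHOLES} that a minimal non-MTG other than a cycle or its complement, with $n>5$, has no induced cycle of length $\ge 5$), and cycles in distinct components are vertex-disjoint with no edges between them, so two of them would induce a subgraph consisting of two disjoint cycles, which by Lemma~\ref{DISJOINTCYCLES} is not mock threshold; minimality would then force $G$ to equal it and be disconnected, a contradiction. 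Hence at least $k-1$ of the components are trees. For a tree component $A_i$, every vertex has degree at least $2$ in $G$ (Lemma~\ref{DEGREEBOUND}), so $A_i$ has at least two vertices, hence at least two leaves, and every leaf is adjacent to $v$; thus $B_i:=G{:}(V(A_i)\cup\{v\})$ contains a cycle through $v$, and a shortest such cycle $C_i$ is chordless, of length $3$ or $4$.

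The core of the argument is then a case analysis on these cycles, together with the cycle on the cyclic side if one is present: the unique cyclic component $A_1$ (if it exists) contains a chordless cycle $C_1$ of length $3$ or $4$, and $v$ must be adjacent to some vertex of $C_1$, since otherwise $C_1$ and a $C_j$ coming from a tree component would again produce two disjoint induced cycles. The clean case is that two tree components both furnish an induced $4$-cycle through $v$: choosing such $C_i$ and $C_j$, which meet only in $v$, the induced subgraph on $V(C_i)\cup V(C_j)$ consists of two $4$-cycles sharing a single vertex, a $7$-vertex graph in which no vertex has degree $0,1,n-2$, or $n-1$ and which is therefore not mock threshold; minimality then forces $G$ to be this graph, so $n=7$. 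The remaining cases are those in which some tree component contributes only a triangle $vxy$ (with $xy\in E(A_i)$) through $v$, so that $x$ and $y$ are, or force the existence of, divalent vertices near $v$. In each such case I would combine a bounded amount of structure from the two sides of $v$ — a short cycle through $v$ on one side, and a short cycle (or triangle) together with a neighbour of $v$ on the other — using that neither $B_i$ can contain an induced cycle of length $\ge 5$, and (when a triangle petal $A_i=\{x,y\}$ occurs) examining how $x$ and $y$ can be reinserted into an MT-ordering of $G-\{x,y\}$. This yields in every subcase a small induced subgraph that is not mock threshold, and minimality then bounds $n$ by $9$.

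The main obstacle is precisely the triangle subcases. Unlike the symmetric two-$4$-cycle configuration, when a component contributes triangles one must keep careful track of divalent vertices and their neighbourhoods, play the no-long-hole (and no-long-antihole) restriction against the local structure around $v$ and against the $2$-connected structure on the cyclic side, and eliminate several near-identical configurations one at a time. It is this bookkeeping, rather than any single idea, that carries the weight of the proof, and it is also why the bound obtained ($9$) is looser than the value of $7$ suggested by the clean case.
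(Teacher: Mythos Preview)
Your reductions and the clean two-$4$-cycle case are correct, but the approach diverges from the paper's and the triangle cases constitute a genuine gap. The paper does not split on whether components of $G-v$ are trees or cyclic; instead it splits on where the \emph{non-neighbours} of $v$ lie (all concentrated in one component $H_1$, or present in at least two components), and in each case builds a \emph{full set}---a vertex set $X$ in which every vertex has at least two neighbours and at least two non-neighbours within $X$---of size at most $9$, then invokes Lemma~\ref{FULLSET}. This works uniformly because the chosen non-neighbours of $v$ supply the two non-neighbours that $v$ itself needs inside $X$.

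Your method of assembling short cycles through $v$ runs into precisely this obstruction. When a tree component contributes only a triangle $vxy$ (or several do), every vertex you collect from that side is adjacent to $v$, so in the resulting configuration $v$ has codegree at most $1$ and is removable; for instance, a $4$-cycle and a triangle sharing only $v$ give a $6$-vertex graph that \emph{is} mock threshold. Your two-$4$-cycle case succeeds exactly because each $4$-cycle contributes the vertex opposite $v$ as a non-neighbour, giving $v$ codegree $2$. In the mixed and all-triangle cases you must therefore reach into the cyclic side for vertices outside $N(v)$ to make $v$ non-removable in your subgraph---and that is exactly what the paper's Case~1 does, analysing $H_1 - N(v)$ according as it contains an induced cycle, is a forest with an edge, or is edgeless (the last subcase is what produces the bound $9$). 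Your proposed alternative of re-inserting $x,y$ into an MT-ordering of $G-\{x,y\}$ does not obviously yield a bound on $|V(G)|$: knowing $G-\{x,y\}$ is mock threshold constrains its ordering, not its size. So as written the triangle subcases are not handled, and the non-neighbour/full-set framework is the missing idea.
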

\begin{proof}
If $G$ is connected or has an isthmus, we are done by Lemmas \ref{NOTCONNECTED} and \ref{ISTHMUS}. Hence we may assume that $G$ is connected and has no isthmuses. Let $v$ be a cutvertex in $G$. Let the components of $G-v$ be $H_1, \ldots, H_k$ ($k \geq 2$). Since $v$ has at least two non-neigbors, we have the following two cases:

\emph{Case 1: $H_i - N(v) = \emptyset$ for $2 \leq i \leq k$.}  
Let $a,b$ be adjacent vertices in $H_2$. (Such vertices must exist because $G$ has no isthmi.) 
If $H_1 - N(v)$ contains an induced cycle $C$, then $V(C) \cup \{v,a,b\}$ is a full set. This is because every vertex is in a cycle and vertices in C are non-adjacent to both $a$ and $b$.
Hence we may assume that $H_1 - N(v)$ is a forest. If there is a tree $T$ with an edge, then it has at least two leaves. Choose two leaves $y,z$ such that the distance between them is minimum, and let $P$ be the unique path from $y$ to $z$. Since $G$ does not contain an induced cycle of length at least $5$, $P$ has length at most $2$. Let $y_1$ be a neighbor of  $y$, and let $z_1$ be a neighbor of $z$ such that $y_1,z_1 \in N(v)$. Now $V(P) \cup \{y_1,z_1,v,a,b\}$ is a full set. This is because every vertex has at least two neighbors and every vertex in P is non-adjacent to both $a$ and $b$. Hence we may assume that $H_1 - N(v)$ is an edgeless forest. Let $y,z$ be vertices in the forest. The set $\{v,a,b, y,z\}$ together with any two neighbors of $y$ and any two neighbors of $z$ is a full set because every vertex has at least two neighbors and the vertices $y,z$ and their two neighbors are non-adjacent to both $a$ and $b$.

\emph{Case 2: $H_i - N(v) \not = \emptyset$ for $i=1,2$.}  
Let $x$ be a vertex in $H_1$ that is non-adjacent to $v$. Let $y$ be a vertex in $H_2$ that is non-adjacent to $v$. Let $C_1$ be a smallest cycle passing through $x$, and let $C_2$ be a smallest cycle passing through $y$. We claim that $X = V(C_1) \cup V(C_2)$ is a full set in $G$.  Every vertex in $G{:}X$ is in a cycle and hence at least two neighbors. Also every vertex except $v$ has at least two non-neighbors because the two cycles were chosen in different components. But the vertex $v$ is not adjacent to either $x$ or $y$. This proves that $X$ is a full set irrespective of whether or not $v \in X$.

In all cases, we have established a full set $X$ of size at most $9$. By Lemma \ref{FULLSET}, $V(G) = X$. This completes the proof. 
\end{proof}

We will use the fact that every vertex in $G$ has at least two neighbors and at least two non-neighbors (Lemma \ref{DEGREEBOUND}) in the sequel without mention. 
So when we make a statement there exists a vertex that is a neighbor (or non-neighbor) of some vertex, it means we are using this fact. To show $G$ has size at most $k$, it suffices (by Lemma \ref{FULLSET}) to show the existence of a full set of size at most $k$.  In other words, the following six lemmas use Lemma \ref{DEGREEBOUND} and Lemma \ref{FULLSET}, but we will not mention them explicitly.

\begin{lemma}
If $G$ has two seagull-type divalent vertices with no common neighbor, then $G$ has at most $8$ vertices.
\end{lemma}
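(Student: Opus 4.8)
The plan is to build a full set of bounded size from the local structure forced by two seagull-type divalent vertices with disjoint neighborhoods. Let $u$ and $v$ be the two seagull-type divalent vertices, with $N(u) = \{a,b\}$ and $N(v) = \{c,d\}$, where $\{a,b\} \cap \{c,d\} = \emptyset$; since $u,v$ are seagull-type, $a \not\sim b$ and $c \not\sim d$. By Lemma~\ref{COMMONNEIGHBOR}, $a$ and $b$ have a common neighbor $w_1 \ne u$, and $c$ and $d$ have a common neighbor $w_2 \ne v$. I would then take as a candidate full set $X$ the union of $\{u,v,a,b,c,d,w_1,w_2\}$ together with, if necessary, a small number of extra vertices chosen to supply missing neighbors or non-neighbors; the target size is at most $8$.

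The key steps, in order: first, verify the ``two neighbors'' half of the full-set condition for each vertex of $X$. The vertices $u$ and $v$ have exactly their two prescribed neighbors, both in $X$; $a$ is adjacent to $u$ and $w_1$, both in $X$; similarly $b \sim u, w_1$ and $c \sim v, w_2$ and $d \sim v, w_2$; and $w_1 \sim a, b$ while $w_2 \sim c, d$, all inside $X$. So this half is automatic once $w_1,w_2$ are included. Second, verify the ``two non-neighbors'' half. Here $u$ is non-adjacent to $c,d$ (disjoint neighborhoods, and $u \ne v$ with $v$ seagull-type so $u \notin N(v)$ — one should double-check $u \not\sim c$ and $u \not\sim d$: indeed $N(u) = \{a,b\}$ is disjoint from $\{c,d\}$, so $u$ is non-adjacent to both $c$ and $d$); symmetrically $v \not\sim a,b$. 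Next $a \not\sim b$ and $a$ is non-adjacent to at least one of $c,d$ unless... — this is where care is needed. The vertex $a$ might be adjacent to both $c$ and $d$ and to $v$? No, $v \not\sim a$. So $a$'s non-neighbors in $X$ include $b$ and $v$. Good; symmetrically for $b,c,d$. Finally $w_1$: its non-neighbors — it is non-adjacent to $u$ (since $N(u)=\{a,b\}$), and we need one more; similarly $w_2 \not\sim v$ plus one more. If $w_1 \sim w_2$ and $w_1$ is adjacent to everything else in $X$, we are short one non-neighbor for $w_1$, and then I would adjoin one additional vertex $z$ (a non-neighbor of $w_1$, which exists by Lemma~\ref{DEGREEBOUND}), chosen so that $z$ itself can be made to satisfy the full-set conditions with its own neighbors; this may push the bound to $8$ but not beyond, and a careful case split on the adjacencies among $\{a,b,c,d,w_1,w_2\}$ shows $8$ suffices.

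The main obstacle I expect is precisely the bookkeeping for $w_1$ and $w_2$ (and any auxiliary vertices): while $u,v,a,b,c,d$ each trivially get two non-neighbors inside $X$ from the seagull structure and the disjointness hypothesis, the common neighbors $w_1,w_2$ could a priori be adjacent to many vertices of $X$, so one must either locate their non-neighbors within $\{a,b,c,d\} \cup \{u,v\}$ — noting $w_i$ is automatically non-adjacent to one of $u,v$ — or enlarge $X$ by a controlled amount, handling the handful of adjacency cases separately and checking each time that the enlarged set is still full and still has at most $8$ vertices. I would organize this as a short case analysis on how $w_1$ and $w_2$ interact with $\{a,b,c,d\}$ and with each other, using Lemma~\ref{ABSENCEOFTRIPLET} to rule out configurations that would force a third divalent vertex with a repeated neighborhood, and Lemma~\ref{3OR4CYCLE} when an extra vertex needs to be placed in a short cycle.
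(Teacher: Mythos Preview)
Your approach is exactly the paper's: take the two divalent vertices, their four neighbors, and one extra common neighbor on each side, and check that this set of at most $8$ vertices is full. The only issue is that you manufacture a difficulty that is not there. You correctly note $w_1 \not\sim u$ because $N(u)=\{a,b\}$ and $w_1\notin\{a,b\}$; but the \emph{same} argument applied to $v$ gives the second non-neighbor: since $N(v)=\{c,d\}$, either $w_1\notin\{c,d\}$ and then $w_1\not\sim v$, or $w_1\in\{c,d\}$, say $w_1=c$, and then $w_1\not\sim d$ by the seagull condition. In every case $w_1$ already has two non-neighbors inside $X=\{u,v,a,b,c,d,w_1,w_2\}$, and symmetrically for $w_2$. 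So the worried scenario ``$w_1$ adjacent to everything else in $X$'' cannot occur, and there is no need to adjoin any auxiliary vertex $z$, to invoke Lemma~\ref{ABSENCEOFTRIPLET}, or to do any case split. The set $X$ is full as it stands, and Lemma~\ref{FULLSET} finishes the proof immediately.
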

\begin{proof}
Let $a$ and $b$ be seagull-type divalent vertices. Let $c,d$ be the neighbors of $a$, and $e,f$ be the neighbors of $b$ with $\{c,d\} \cap \{e,f\} = \emptyset$. By Lemma \ref{COMMONNEIGHBOR}, there exists a vertex $g$ adjacent to both $c$ and $d$, and there exists a vertex $h$ that is adjacent to both $e$ and $f$. Note that $g$ and $h$ need not be distinct. Now we claim that $X = \{a,b,c,d,e,f,g,h\}$ is a full set in $G$. All we need to show that every vertex in $X$ has at least two neighbors and at least two non-neighbors in $X$. The vertices $a$,$b$, being divalent clearly satisfy the condition of having at least two neighbors and at least two non-neighbors. The vertex $c$ has neighbors $a,g$ and non-neighbors $b,d$. The vertex $d$ has neighbors $a,g$ and non-neighbors $c,b$. The vertex $e$ has neighbors $b,h$ and non-neighbors $f,a$. The vertex $f$ has neighbors $b,h$ and non-neighbors $e,a$. The vertex $g$ has neighbors $c,d$ and non-neighbors $a,b$. The vertex $h$ has neighbors $e,f$ and non-neighbors 
$a,b$. Hence $X$ is a full set. 
\end{proof}

From now on, we will not give detailed reasons why a set of vertices is a full set. They are all similar to the one above, and the reader can verify them easily. The presence of two divalent vertices makes the verification easy.

\begin{lemma}
If $G$ has two seagull-type divalent vertices with exactly one common neighbor, then $G$ has at most $7$ vertices.
\end{lemma}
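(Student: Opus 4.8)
The plan is to construct a full set of at most seven vertices, mirroring the previous lemma but exploiting the shared neighbor to save one vertex. Let $a$ and $b$ be the seagull-type divalent vertices, with $a$ adjacent to $c,d$ and $b$ adjacent to $d,e$, where $d$ is the unique common neighbor; thus $c \neq e$, and $c$ is not adjacent to $b$ (else $c$ would be a common neighbor), nor is $e$ adjacent to $a$. First I would apply Lemma \ref{COMMONNEIGHBOR} to the seagull vertex $a$: its neighbors $c,d$ have a further common neighbor $g$; similarly the neighbors $d,e$ of $b$ have a further common neighbor $h$. As in the previous lemma, $g$ and $h$ need not be distinct, and indeed here we might even have $g = h$ or collisions with $c,e$, but that only makes the set smaller, so it does no harm.

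The candidate full set is $X = \{a,b,c,d,e,g,h\}$, which has at most $7$ vertices. To verify it is a full set I would check, vertex by vertex, that each element has at least two neighbors and at least two non-neighbors inside $X$. The divalent vertices $a$ and $b$ are immediate, since each has exactly two neighbors in all of $G$ (hence in $X$) and $|X| \ge 5$ guarantees two non-neighbors. For $c$: neighbors $a,g$ in $X$ and non-neighbors $b,e$ (recall $c \not\sim b$; and $c \not\sim e$ would follow if needed from minimality of the cycle, but in any case $c$ has at least the two non-neighbors $b$ and one of $d,e$). For $e$: symmetrically, neighbors $b,h$ and non-neighbors $a,c$. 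For $d$: neighbors $a,b$ and non-neighbors — here $d$ is adjacent to $a,b,g,h$, so I need $d$ to have two non-neighbors among $\{a,b,c,e,g,h\}$; since $c \not\sim a$'s partner relation does not directly help, I instead use that $d$ together with $a$ being seagull-type forces $c \not\sim d$ and $e \not\sim d$ (the two neighbors of a seagull vertex are non-adjacent), giving $d$ the non-neighbors $c$ and $e$. For $g$: neighbors $c,d$ and non-neighbors $a,b$ (as $g \sim a$ would make $a$ non-divalent, and similarly $g\not\sim b$). For $h$: neighbors $d,e$ and non-neighbors $a,b$ likewise.

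Once $X$ is confirmed to be a full set, Lemma \ref{FULLSET} gives $V(G) = X$, so $|V(G)| \le 7$, completing the proof. The main obstacle, such as it is, is purely bookkeeping: confirming that $d$ has two non-neighbors in $X$, since $d$ is the hub adjacent to four of the other six vertices. The key observation that resolves it is that a seagull-type divalent vertex has \emph{non}-adjacent neighbors, so $c \not\sim d$ and $d \not\sim e$; this is the only place the seagull hypothesis does real work beyond supplying the vertices $g$ and $h$, and it is exactly what distinguishes this case from one where the shared neighbor might otherwise be dominating within $X$.
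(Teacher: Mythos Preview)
Your argument is correct and follows the same strategy as the paper: build a full set from $a,b,c,d,e$ together with the extra common neighbors supplied by Lemma~\ref{COMMONNEIGHBOR}, then invoke Lemma~\ref{FULLSET}. The paper first splits off the subcase where $ce$ is an edge (getting a $5$-element full set $\{a,b,c,d,e\}$ directly), whereas you handle both cases uniformly with the $7$-element set; your verification for $c$ is a bit tangled, but since the seagull hypothesis gives $c\not\sim d$ outright, $c$ always has the two non-neighbors $b,d$ in $X$, and the rest goes through as you wrote.
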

\begin{proof}
Let $a$ and $b$ be seagull-type divalent vertices. Let $c,d$ be the neighbors of $a$, and $d,e$ be the neighbors of $b$ where $c \not = e$. 
If $ce$ is an edge, then $\{a,b,c,d,e\}$ is a full set, and we are done. Hence we may assume that $ce$ is not an edge. By Lemma \ref{COMMONNEIGHBOR}, there exist vertices $f$ and $g$ such that $f$ is adjacent to $c$ and $d$, and $g$ is adjacent to $d$ and $e$. Now $\{a,b,c,d,e,f,g\}$ is a full set, and we are done.   
\end{proof}

\begin{lemma}
If $G$ has two triangle-type divalent vertices with no common neighbor,
then $G$ has at most $10$ vertices.
\end{lemma}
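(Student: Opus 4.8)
The plan is to mimic the two preceding lemmas: exhibit an explicit full set of bounded size built around the two triangle-type divalent vertices and the triangles containing them. Let $a$ and $b$ be triangle-type divalent vertices with $N(a)=\{c,d\}$, $cd\in E(G)$, and $N(b)=\{e,f\}$, $ef\in E(G)$, where the hypothesis gives $\{c,d\}\cap\{e,f\}=\emptyset$. The vertices $a$ and $b$ automatically have two neighbors and two non-neighbors inside any set containing their neighbors together with each other (since $a$ is non-adjacent to $b$ and to $e,f$, and similarly for $b$). So the only vertices needing extra care are $c,d,e,f$: each already has one "useful" neighbor ($a$ or $b$) and one "useful" non-neighbor ($b$ or $a$), but inside $\{c,d\}$ the vertex $c$ has $d$ as a neighbor (bad for the non-neighbor count) and inside the whole proposed set $c$ might be adjacent to all of $e,f$, wrecking its non-neighbor count. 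That is exactly why we may need to adjoin up to four more vertices — for instance a non-neighbor of $c$ that is already forced to exist, and likewise for $d,e,f$ — giving a set of size at most $8+4$; the claimed bound $10$ says we can always be more economical.

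The key steps, in order: (1) put $S_0=\{a,b,c,d,e,f\}$ and check which of $c,d,e,f$ already have two neighbors and two non-neighbors in $S_0$; the failures are of two kinds, "too few non-neighbors" (the vertex is adjacent to too much of $\{c,d,e,f\}$) and "too few neighbors" (it is adjacent to too little). (2) For a vertex short on non-neighbors, use Lemma~\ref{DEGREEBOUND} (every vertex has at least two non-neighbors in $G$) to pull in a witness non-neighbor; for a vertex short on neighbors, pull in a witness neighbor. (3) Observe that a single added vertex can simultaneously fix several deficiencies: e.g. if $c$ and $d$ are each adjacent to both of $e,f$, then $G{:}\{c,d,e,f\}=K_4$ and $a,b$ see all of it as non-neighbors, so it suffices to add one common non-neighbor of $c$ and $d$ and one common non-neighbor of $e$ and $f$ (each exists since $\bard(c),\bard(e)\ge2$ and after removing $a$ or $b$ still $\ge1$), landing in a set of size $\le 10$. (4) Run through the remaining adjacency patterns between $\{c,d\}$ and $\{e,f\}$ — no edges, one edge, two edges forming a matching, two edges sharing an endpoint, three edges, four edges — and in each case note that at most four auxiliary vertices, often fewer and often shared, restore the full-set property. (5) Invoke Lemma~\ref{FULLSET} to conclude $V(G)=X$ and hence $n\le 10$.

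I expect the main obstacle to be the bookkeeping in step (4): one has to be careful that the auxiliary vertices chosen to repair one vertex's deficiency do not themselves fail the full-set condition, and that one does not blow past $10$ vertices in the worst pattern. The cleanest way to control this is to always prefer auxiliary vertices that are non-neighbors of $a$ or of $b$ (so they inherit two non-neighbors for free from $\{a,b,\dots\}$) and that are forced by Lemma~\ref{3OR4CYCLE} to lie in a short cycle (so they inherit two neighbors for free), rather than ad hoc choices; this is the same device used implicitly in the previous two lemmas. A secondary subtlety is that $c$ or $d$ might coincide with an auxiliary vertex needed for $e$ or $f$ (or the two triangles might share structure), but such coincidences only decrease $|X|$, so they are harmless and can be dispatched with a remark. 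Given the setup the authors have built — and their standing comment that "the reader can verify them easily" once two divalent vertices are present — I anticipate the proof will simply name the worst-case $X$ and assert it is full, exactly as in the two lemmas just above.
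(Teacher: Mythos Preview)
Your case analysis on the number of cross-edges between $\{c,d\}$ and $\{e,f\}$ is a different organization from the paper's, and as written it has a real gap. In step (3) you assert that when $G{:}\{c,d,e,f\}=K_4$ one can add ``one common non-neighbor of $c$ and $d$''; but nothing guarantees such a common non-neighbor exists---$\bard(c)\ge2$ and $\bard(d)\ge2$ give each of $c,d$ a non-neighbor other than $b$, not a shared one. More seriously, even if you add separate non-neighbors $c',d',e',f'$ (landing at exactly $10$ vertices), each auxiliary vertex must itself have two neighbors inside $X$, and your device of invoking Lemma~\ref{3OR4CYCLE} to put $c'$ in a short cycle only helps if you then add that cycle to $X$---which can push you past $10$ when several auxiliaries need repair simultaneously. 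The cross-edge pattern alone does not control the neighborhoods of the auxiliary vertices, so the bookkeeping you flag as ``the main obstacle'' is not merely tedious: without a further idea it does not close.

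The paper sidesteps this by organizing the cases not by cross-edges but by how many neighbors an arbitrary outside vertex $z$ has in $\{c,d,e,f\}$. If some $z$ has zero such neighbors, a shortest cycle through $z$ together with $S_0$ is already full (size $\le 10$). If some $z$ has exactly one, a four-vertex gadget built around $z$ and a non-neighbor of that one contact suffices. Otherwise \emph{every} outside vertex has at least two neighbors in $\{c,d,e,f\}$; now the naive choice $X=S_0\cup\{c',d',e',f'\}$ with $c'\ne b$, $d'\ne b$, $e'\ne a$, $f'\ne a$ works, because each auxiliary automatically picks up two neighbors from $\{c,d,e,f\}$ and two non-neighbors from $\{a,c\}$ (or $\{a,d\}$, $\{b,e\}$, $\{b,f\}$). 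This preliminary reduction---forcing outside vertices to be well-connected to $\{c,d,e,f\}$ before you recruit them---is the missing idea that makes the auxiliary vertices self-sufficient.
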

\begin{proof}
Let $a$ and $b$ be triangle-type divalent vertices. Let $c,d$ be the neighbors of $a$, and $e,f$ be the neighbors of $b$ with $\{c,d\} \cap \{e,f\} = \emptyset$. Suppose there is a vertex $z$  that is adjacent to none of $c,d,e,f$. Let $C$ be a smallest cycle passing through $z$. We know that $C$ has length $3$ or $4$. Now $\{a,b,c,d,e,f\} \cup V(C)$ is a full set, and we are done. Hence we may assume that every vertex outside $\{a,b,c,d,e,f\}$ has at least one neighbor in $\{c,d,e,f\}$. 

Suppose there is a vertex $x$ with exactly one neighbor in $\{c,d,e,f\}$, say $e$. Let $y$ be a neighbor of $x$ other than $e$. Let $u$ be a vertex non-adjacent to $e$ other than $a$. Let $v$ be a neighbor of $u$ outside $\{c,d,e,f\}$, and if no such vertex exists, then let $v$ be any neighbor of $u$. Then $\{a,b,c,d,e,f,u,v,x,y\}$ is a full set, and we are done. Hence we may assume that every vertex outside $\{a,b,c,d,e,f\}$ is adjacent to at least two vertices in $\{c,d,e,f\}$. 

Let $c',d',e',f'$ be vertices non-adjacent to 
$c,d,e,f$, 
respectively, such that $c' \not = b$, $d' \not = b$, $e' \not = a$, and $f' \not = a$. Then $\{a,b,c,d,e,f,c',d',e',f'\}$ is a full set, and we are done.   
\end{proof}

\begin{lemma}
If $G$ has two triangle-type divalent vertices with exactly one common neighbor,
then $G$ has at most $10$ vertices.
\end{lemma}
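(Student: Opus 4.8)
The plan is to mimic the structure of the preceding lemma (two triangle-type divalent vertices with no common neighbor) but exploit the extra adjacency to shave the bound. Let $a$ and $b$ be the two triangle-type divalent vertices sharing exactly one common neighbor: say $a$ has neighbors $c,d$ and $b$ has neighbors $d,e$, where $c\neq e$, the edges $cd$ and $de$ are present (triangle-type), and $c$ is not adjacent to $e$ (else we would not have \emph{exactly} one common neighbor — or rather, $c$ adjacent to $e$ would just be an extra edge, so I should handle that: if $ce$ is an edge, then $\{a,b,c,d,e\}$ already has every vertex with two neighbors and two non-neighbors, so it is a full set of size $5$ and we are done). So assume $ce\notin E(G)$. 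The idea is to build a full set by starting from the "core'' $\{a,b,c,d,e\}$ and adjoining a bounded number of extra vertices witnessing the "two non-neighbors'' condition for $c$, $d$, $e$, just as in the no-common-neighbor case.

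The key steps, in order. First, dispose of the case $ce\in E(G)$ as above. Second, with $ce\notin E(G)$: the vertex $d$ has $a,b$ as non-neighbors (they are divalent with $ab\notin E$ since $a$'s neighbors are $c,d$), so $d$ is fine within the core. The vertex $c$ has non-neighbors $e$ and (inside the core) needs one more: it is non-adjacent to $b$? Not necessarily — $c$ could be adjacent to $b$, but $b$ is divalent with neighbors $d,e$, so $c\notin N(b)$, hence $cb\notin E(G)$; so $c$ has non-neighbors $b,e$ in the core. Symmetrically $e$ has non-neighbors $a,c$. Thus $a,b,d,c,e$ each already have two non-neighbors inside $\{a,b,c,d,e\}$; what is missing is \emph{two neighbors} inside the set for $c$ and $e$: $c$ has only $a,d$ so that's two — good; $e$ has only $b,d$ — also two; and $d$ has neighbors $a,b,c,e$ — fine. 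Wait, so $\{a,b,c,d,e\}$ already looks like a full set of size $5$! I should double-check: is $C_5$ lurking? The induced subgraph on $\{a,b,c,d,e\}$ with edges $ac,ad,bd,be,cd,de$ and non-edges $ab,ae,bc,ce$ — vertex $a$: neighbors $c,d$, non-neighbors $b,e$ ✓; $b$: neighbors $d,e$, non-neighbors $a,c$ ✓; $c$: neighbors $a,d$, non-neighbors $b,e$ ✓; $d$: neighbors $a,b,c,e$, non-neighbors none ✗. So $d$ has zero non-neighbors in the set, and this is \emph{not} a full set. That is exactly why we must enlarge: we need two non-neighbors of $d$, and $a,b$ are already in but they \emph{are} non-neighbors of $d$ — so actually $d$ does have non-neighbors $a$ and $b$. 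Re-examine: $da\in E$? Yes, $a$'s neighbors are $c,d$, so $ad\in E(G)$. So $a$ is a \emph{neighbor} of $d$, not a non-neighbor. Right — so $d$'s non-neighbors in the core: $d$ is adjacent to $a,b,c,e$, i.e.\ to all four others, so $d$ has no non-neighbor in the core. Hence we genuinely need to add two non-neighbors of $d$: pick $d_1,d_2\notin N(d)$ with $d_1,d_2\neq$ anything forced (here $a,b$ are adjacent to $d$ so no clash; just pick any two non-neighbors of $d$). Now in $\{a,b,c,d,e,d_1,d_2\}$, we must check $d_1,d_2$ have two neighbors and two non-neighbors in the set; if not, add at most two more neighbors of $d_1$ and of $d_2$. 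Naively this already threatens to exceed $10$. The refinement that keeps it at $10$: observe $d_1$ is non-adjacent to $d$, and we can \emph{also} arrange $d_1$ non-adjacent to one of $a,b$ or reuse core vertices; more carefully, follow the template of the previous proof — reuse $c,e$ (or $a,b$) as the required non-neighbors/neighbors of the newly added vertices wherever possible, and only add genuinely new witnesses when forced, tracking that at most five new vertices beyond $\{a,b,c,d,e\}$ are ever needed, giving $|X|\le 10$.

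The main obstacle will be exactly this bookkeeping: arguing that the witnesses needed for the "two non-neighbors of $d$'' vertices $d_1,d_2$ (and in turn \emph{their} witnesses) can be absorbed into a set of total size $\le 10$. I expect to handle it by a short case analysis on how $d_1,d_2$ attach to the core $\{a,b,c,d,e\}$ — e.g.\ if $d_1$ has $\le 1$ neighbor in the core, bring in a neighbor $d_1'$ of $d_1$ outside the core and note $d_1'$ can serve double duty; if $d_1$ has $\ge 2$ neighbors in the core it already has two neighbors in $X$, and its non-neighbor requirement is met by $d$ plus whichever of $a,b,c,e$ it misses (it must miss at least one of the non-mutually-adjacent pair $\{a,b\}$ or $\{c,e\}$ wait $a,b$ need not be the relevant pair — but at least it misses $d$, and since it cannot be adjacent to all of $a,b,c,e$ without... hmm, actually it could; then add one external non-neighbor). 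This is precisely the "verify routine full sets'' style the authors have already declared they will not belabor, so the final writeup can be correspondingly terse: exhibit the full set and cite Lemma \ref{FULLSET}. The only real content is confirming the vertex count never exceeds ten across the handful of attachment cases.
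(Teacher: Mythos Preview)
Your disposal of the case $ce\in E(G)$ is incorrect. With $a$'s neighbors $c,d$, $b$'s neighbors $d,e$, and the edges $cd,de,ce$, the vertex $d$ is adjacent to all of $a,b,c,e$ and so has \emph{zero} non-neighbors in $\{a,b,c,d,e\}$; likewise $c$ has only $b$ as a non-neighbor and $e$ has only $a$. So $\{a,b,c,d,e\}$ is not a full set, and this case is not a five-line triviality. (You catch the problem with $d$ later, but only in the context $ce\notin E$; you never revisit the $ce\in E$ branch.)

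More seriously, in the main case $ce\notin E$ your plan of ``add two non-neighbors $d_1,d_2$ of $d$, then patch up their neighbor requirements'' does not obviously terminate at ten. Since $a,b$ are divalent, $d_1$ and $d_2$ can only find neighbors inside the enlarged set among $c,e,$ and each other. In the worst case both $d_1,d_2$ are non-adjacent to $c,e,$ and to one another; you then need two external neighbors for each, and those four new vertices in turn each need a \emph{second} neighbor in the set (they have only $d_i$ so far). The cascade does not visibly stop at ten, and your sketch gives no mechanism to control it.

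The paper's proof takes a different route entirely. Rather than growing a full set outward from the core, it pivots on co-divalent vertices: first it handles the case where some co-divalent vertex lies outside $\{a,b,c,d,e\}$ (exploiting its high degree to close off full sets of size $\le 10$); then the case where $c$ and $e$ are both co-divalent; and finally, when neither holds, it invokes Lemma~\ref{DIVALENTORCODIVALENT} at $a$ and $b$ to force one of $c,e$ to be divalent, which makes $d$ a cutvertex and hands the problem to Lemma~\ref{CUTVERTEX}. The structural leverage of a co-divalent vertex (adjacent to almost everything, hence usable as a ``universal neighbor'' witness) is what keeps the counts bounded --- leverage your purely local construction around $d$ does not have.
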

\begin{proof}
Let $a,b$ be triangle-type divalent vertices. Let $c,d$ be the neighbors of $a$, and $d,e$ be the neighbors of $b$ where $c \not = e$. 
 
Suppose there exists a co-divalent vertex $z$ outside $\{a,b,c,d,e\}$. Suppose there exists a vertex $f$ non-adjacent to all three of $c,d,e$. Let $f'$ be a neighbor of $f$ other than $z$. Let $d'$ be a non-neighbor of $d$ other than $f$, and $d''$ be a neighbor of $d'$ other than $z$. Now $\{a,b,c,d,e,f,f',d',d'',z\}$ is a full set. Hence we may assume every vertex outside $\{a,b,c,d,e\}$ has at least one neighbor in $\{c,d,e\}$. Let $c'$ be a non-neighbor of $c$ such that  $c' \not = b$. Let $d',d''$ be two non-neighbors of $d$. Let $e'$ be a non-neighbor of $e$ such that $e' \not = a$. Now $\{a,b,c,d,e,c',d',d'',e',z\}$ is a full set. 
Hence we may assume there is no co-divalent vertex outside $\{a,b,c,d,e\}$.

Suppose $c,e$ are both co-divalent. Suppose $ce$ is not an edge. Let $d',d''$ be two non-neighbors of $d$. Then $\{a,b,c,d,e,d',d''\}$ is a full set. Hence we may assume $ce$ is an edge. 
Suppose there exists a vertex $f$ non-adjacent to all three of $c,d,e$. Let $f', f''$ be neighbors of $f$. Let $d'$ be a non-neighbor of $d$ other than $f$. Now $\{a,b,c,d,e,f,f',f'',d'\}$ is a full set. Hence we may assume that every vertex outside $\{a,b,c,d,e\}$ has at least one neighbor in $\{c,d,e\}$. 
Let $c'$ be a non-neighbor of $c$ and $e'$ be a non-neighbor of $e$ such that $c' \not = b$ and $e' \not = a$. Suppose $c'=e'$. Let $c''$ be a neighbor of $c'$. Let $d',d''$ be two non-neighbors of $d$. Then $\{a,b,c,d,e,c',c'',d',d''\}$ is a full set. Hence we may assume $c' \not = e'$. Then $ec'$ and $ce'$ are edges. 
Suppose $d$ is adjacent to both $c'$ and $e'$. Let $d',d''$ be two non-neighbors of $d$. Now $\{a,b,c,d,e,c',e',d',d''\}$ is a full set. Hence we may assume that $d$ is non-adjacent to at least one of $c',e'$. 
Suppose $d$ is non-adjacent to both $c'$ and $e'$. Let $c''$ be a neighbor of $c'$ and $e''$ be a neighbor of $e'$, such that $c'' \not =e$ and $e'' \not = c$. Then $\{a,b,c,d,e,c',e',c'',e''\}$ is a full set. 
Hence we may assume that $d$ is adjacent to exactly one of $c',e'$, say $c'$. Let $e''$ be a neighbor of $e$ and $d'$ be a non-neighbor of $d$, such that $d' \not = e'$. Now $\{a,b,c,d,e,c',e',e'',d'\}$ is a full set. 
Hence we may assume $c,e$ are not both co-divalent.

By Lemma \ref{DIVALENTORCODIVALENT} applied to vertices $a$ and $b$, one of $c,e$ must be divalent, but then $d$ would be a cutvertex, and we are done by Lemma \ref{CUTVERTEX}. 
\end{proof}

\begin{lemma}
If $G$ has two divalent vertices, one of triangle-type and the other seagull-type, with no common neighbor,
then $G$ has at most $10$ vertices.
\end{lemma}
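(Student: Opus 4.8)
The strategy mirrors the preceding lemmas: I will exhibit a full set of size at most $10$ and invoke Lemma \ref{FULLSET}. Let $a$ be a triangle-type divalent vertex with neighbors $c,d$ (so $cd$ is an edge), and let $b$ be a seagull-type divalent vertex with neighbors $e,f$ (so $ef$ is a non-edge), where $\{c,d\}\cap\{e,f\}=\emptyset$. By Lemma \ref{COMMONNEIGHBOR}, $e$ and $f$ have a common neighbor $g\notin\{b\}$; note $g$ may or may not lie in $\{c,d\}$. The candidate core is $S=\{a,b,c,d,e,f,g\}$, which already has $7$ vertices, and every vertex of $S$ other than possibly $c,d,g$ is easily seen to have two neighbors and two non-neighbors inside $S$ (using that $a,b$ are divalent and $cd$ is an edge while $ef$ is not). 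So the work is to either complete $S$ to a full set by adding at most three more vertices supplying the missing non-neighbors (for $c,d$) or the missing non-neighbor/neighbor pairs (for $g$), or to find a structural feature — a cutvertex, an isthmus, disconnectedness — that lets us finish via Lemmas \ref{NOTCONNECTED}, \ref{ISTHMUS}, or \ref{CUTVERTEX}.

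\textbf{Key steps.} First I would record which vertices of $S$ are already ``satisfied''. Inside $S$: $a$ has neighbors $c,d$ and non-neighbors $b,e$; $b$ has neighbors $e,f$ and non-neighbors $a,c$; $c$ has neighbor $a$ and non-neighbors $b$ (and whichever of $e,f$ it misses); $d$ likewise. The vertex $g$ has neighbors $e,f$; its non-neighbors in $S$ need attention. So the deficiencies are: $c$ and $d$ each may need one more neighbor inside the full set, and $g$ may need more non-neighbors. Second, I would case on the number of neighbors of an arbitrary ``outside'' vertex in $\{c,d,e,f\}$, exactly as in the two-triangle-type lemma: if some vertex $z$ is non-adjacent to all of $c,d,e,f$, take a shortest (hence induced, length $3$ or $4$) cycle $C$ through $z$ and check that $S\cup V(C)$ — of size at most $7+4=11$, but in fact one can arrange $\le 10$ by noting $g$ or overlap — is full; otherwise every outside vertex has a neighbor in $\{c,d,e,f\}$, which constrains codegrees. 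Third, I would handle the vertices that witness $c$'s and $d$'s missing neighbors and $g$'s missing non-neighbors: pick $c'$ a non-neighbor of $c$ other than $b$, $d'$ a non-neighbor of $d$, and if needed a neighbor of $g$ outside $S$; in each subcase verify $\{a,b,c,d,e,f,g,c',d',\dots\}$ has $\le 10$ vertices and is full. Fourth, the residual configurations — where these witnesses coincide or force $g$ (or $d$, or some $x$) to be a cutvertex — are dispatched by Lemma \ref{CUTVERTEX}.

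\textbf{Main obstacle.} The delicate point is bookkeeping the overlaps so the full set stays at size $\le 10$: because the seagull vertex forces a fourth ``support'' vertex $g$ into the core, the naive count ($\{a,b,c,d,e,f\}$ plus up to four witnesses plus possibly a cycle through some $z$) can reach $11$ or $12$, so one must exploit coincidences — e.g. $g\in\{c,d\}$, or the cycle through $z$ reusing $e$ or $f$, or one witness serving double duty — to shave it down to $10$. As in the companion lemmas, whenever the witnesses refuse to overlap, the obstruction to overlapping typically manifests as a cutvertex (here $d$ or $g$) or an isthmus, and then Lemmas \ref{ISTHMUS} and \ref{CUTVERTEX} close the case with the desired bound. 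I expect the proof to consist of three or four nested case distinctions of the same flavor as the two-triangle-type lemma, with the seagull common-neighbor vertex $g$ being the source of the extra casework.
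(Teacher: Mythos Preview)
Your toolkit is right (full sets, Lemma \ref{COMMONNEIGHBOR} for $g$, short cycles through non-neighbors), but the organizing principle you propose is not the one that makes the bound $10$ fall out cleanly, and your plan as written has a real counting gap. Two points.

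\textbf{First, a bookkeeping slip.} You write ``$c$ has neighbor $a$'' and then ``$c$ and $d$ each may need one more neighbor inside the full set.'' But $cd$ is an edge, so $c$ already has two neighbors $a,d$ in $S$; what $c$ and $d$ may lack in $S$ is a second \emph{non}-neighbor (they have $b$ for free, but possibly nothing else if both $e,f,g$ are adjacent to them). Your later choice of witnesses $c',d'$ as non-neighbors shows you know this, but the misstatement propagates into the case analysis.

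\textbf{Second, the organizing parameter.} You propose to case on whether an outside vertex misses all of $c,d,e,f$, imitating the two-triangle lemma. The paper instead cases on $m$, the number of edges between $\{c,d\}$ and $\{e,f\}$, and this is exactly what dissolves your ``size $11$ or $12$'' worry. The point is a trade-off: the extra vertex $g$ is there to give $e$ and $f$ a second neighbor, but $e,f$ need $g$ only when they lack neighbors among $c,d$ --- i.e.\ when $m$ is small. Conversely, $c,d$ need outside non-neighbor witnesses only when they are adjacent to both $e$ and $f$ --- i.e.\ when $m$ is large. So one never needs $g$ \emph{and} many witnesses simultaneously. Concretely: for $m\le 1$ the seven-vertex set $\{a,b,c,d,e,f,g\}$ is already full; for $m=2$ one either gets a five-vertex full set $\{b,c,d,e,f\}$ or re-uses the seven-vertex set or appends a short cycle to $\{b,c,d,e,f\}$; for $m\ge 3$ one drops $g$ entirely (since $e,f$ now have neighbors among $c,d$) and works with $\{a,b,c,d,e,f\}$ plus a short cycle or at most four witnesses. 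In every branch the count stays at $10$. No appeal to Lemmas \ref{ISTHMUS} or \ref{CUTVERTEX} is needed in this lemma; your expectation that a cutvertex fallback closes residual cases does not match how the argument actually terminates.
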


\begin{proof}
Let $a$ be a triangle-type divalent vertex with neighbors $c,d$. Let $b$ be a seagull-type divalent vertex with neighbors $e,f$ such that $\{c,d\} \cap \{e,f\} = \emptyset$. Let $g$ be a vertex adjacent to both $e$ and $f$. Let $m$ be the number of edges with one endpoint in $\{c,d\}$ and the other endpoint in $\{e,f\}$. Suppose $m=0$ or $1$. Then $\{a,b,c,d,e,f,g\}$ is a full set and we are done. Suppose $m=2$. If $ed$ and $cf$ are edges, or if $ec$ and $df$ are edges, then $\{b,c,d,e,f\}$ is a full set. If $ec$ and $ed$ are edges, or if $fc$ and $fd$ are edges, then $\{a,b,c,d,e,f,g\}$ is a full set. If $ce$ and $cf$ are edges, or if $de$ and $df$ are edges, then $\{b,c,d,e,f\} \cup V(C)$,  where $C$ is a smallest cycle passing through a non-neighbor of $c$ other than $b$, is a full set. Suppose $m=3$. Say $ce$ is not an edge. Let $d'$ be a vertex not adjacent to $d$ such that $d' \not = b$. Let $C$ be a smallest cycle passing through $d'$. Then $\{a,b,c,d,e,f\} \cup V(C)$ is a full set. Suppose $m=4$. If 
there is a vertex $z$ non-adjacent to both of $c,d$, then $\{a,b,c,d,e,f\} \cup V(C)$, where $C$ is a smallest cycle passing through $z$, is a full set. Hence we may assume every vertex outside $\{a,b,c,d,e,f\}$ is adjacent to at 
least one of $c,d$. Let $c'$ be a non-neighbor of $c$, $c''$ be a neighbor of $c'$, $d'$ be a non-neighbor of $d$, and $d''$ be a neighbor of $d'$ such that $c' \not = b, d' \not = b, c'' \not = d,$ and  $d'' \not = c$. Then $\{a,b,c,d,e,f,c',c'',d',d''\}$ is a full set. In all cases, we have established the existence of a full set of size at most $10$. The proof is complete. 
\end{proof}

\begin{lemma}
If $G$ has two divalent vertices, one of triangle-type and the other seagull-type, with exactly one common neighbor,
then $G$ has at most $10$ vertices.
\end{lemma}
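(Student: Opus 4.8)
The plan is to run the same case analysis that drove the preceding lemmas: fix a triangle-type divalent vertex $a$ with neighbors $\{c,d\}$ and a seagull-type divalent vertex $b$ whose neighbor set meets $\{c,d\}$ in exactly one vertex, say $d$, so $b$ has neighbors $\{d,e\}$ with $e\notin\{c,d\}$; since $a$ is triangle-type, $cd$ is an edge, and by Lemma \ref{COMMONNEIGHBOR} there is a vertex $g$ adjacent to both $d$ and $e$. The goal is always to exhibit a full set of size at most $10$, after which Lemma \ref{FULLSET} finishes the argument. First I would record the ``easy'' subcase: if $ce$ is an edge, then $\{a,b,c,d,e\}$ already is a full set (each of $a,b$ is divalent, $c$ sees $a,d$ and misses $b,e$, $e$ sees $b,c$ and misses $a,d$, $d$ sees $a,b,c$ and misses $e$ and... one must double-check $d$ has two non-neighbors in the set — if not, enlarge by two non-neighbors of $d$). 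So assume $ce$ is a non-edge, and keep $g$ available; then $\{a,b,c,d,e,g\}$ plus, if needed, one more non-neighbor of $d$ should be full once we know enough about how $g$ attaches.

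The main work is to control vertices outside the core set $\{a,b,c,d,e,g\}$. Following the pattern of the ``exactly one common neighbor'' lemmas above, I would split on whether there is a vertex avoiding $\{c,d,e\}$ entirely: if $z$ is non-adjacent to all of $c,d,e$, take a shortest (hence induced, length $3$ or $4$) cycle $C$ through $z$ via Lemma \ref{3OR4CYCLE}, and show $\{a,b,c,d,e,g\}\cup V(C)$ is full — this has at most $10$ vertices. Otherwise every outside vertex touches $\{c,d,e\}$. Then, as in the earlier proofs, one distinguishes whether an outside vertex has exactly one neighbor in $\{c,d,e\}$ (pull in that vertex, one of its other neighbors, a non-neighbor of the relevant core vertex and one of \emph{its} neighbors, keeping the count $\le 10$), reducing to the case that every outside vertex has at least two neighbors in $\{c,d,e\}$. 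In that last regime I would pick non-neighbors $c',e'$ of $c$ and $e$ (chosen $\neq b$, $\neq a$ respectively) and analyze how $d$ and these relate, exactly mirroring the $c',d',e',f'$-type endgame of the two-triangle-type lemma, possibly peeling off to Lemma \ref{CUTVERTEX} if $d$ turns out to separate $\{a,c\}$ from $\{b,e\}$; in each branch one writes down a set of at most $10$ vertices that is full.

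The main obstacle I anticipate is not any single deep idea but the sheer bookkeeping: because only two vertices ($a$ and $b$) are automatically divalent and the remaining core vertices $c,d,e,g$ have only partially known neighborhoods, verifying the ``at least two non-neighbors in $X$'' condition for $c,d,e,g$ in each candidate full set is delicate, and the vertex $d$ — adjacent to $a$, $b$, $c$, and $g$ — is especially prone to having too few non-neighbors inside a small $X$, so several branches will need $d$'s two non-neighbors explicitly adjoined (as the paper does with $d',d''$ in the neighboring lemmas). A secondary subtlety is ensuring the auxiliary vertices are genuinely distinct from the core and from each other; as in the earlier lemmas this is handled by the routine side-conditions ($c'\neq b$, $d'\neq e'$, etc.) and by invoking Lemma \ref{ABSENCEOFTRIPLET} if three divalent vertices with neighborhood $\{d,e\}$ or $\{c,d\}$ threaten to appear. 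Once the branching is laid out, each leaf is a one-line ``this set is full'' assertion of the kind the paper has already told the reader to verify themselves.
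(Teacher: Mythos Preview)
Your plan has the right architecture but a concrete error in the ``easy'' subcase. When $ce$ is an edge, $\{a,b,c,d,e\}$ is \emph{not} full: you wrote ``$c$ sees $a,d$ and misses $b,e$'', but you are in the case where $ce$ \emph{is} an edge, so $c$ sees $a,d,e$ and has only the single non-neighbor $b$ in that set. Thus both $c$ and $d$ (not just $d$) need two non-neighbors adjoined, and this is in fact the case that requires the most work. The paper handles it by first asking whether some vertex $f$ avoids both $c$ and $d$; if so, $\{a,b,c,d,e\}\cup V(C)$ for a shortest cycle $C$ through $f$ is full, and if not, one picks $c'\neq b$ non-adjacent to $c$, $c''\neq d$ a neighbor of $c'$, $d'\neq e$ non-adjacent to $d$, $d''\neq c$ a neighbor of $d'$, and checks that $\{a,b,c,d,e,c',c'',d',d''\}$ is full.

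In the $ce$-non-edge case your branching (split on a vertex avoiding all of $\{c,d,e\}$, then on vertices with exactly one neighbor there) would presumably eventually succeed, but the paper's split is sharper and avoids the vertex $g$ entirely: it asks whether some $f$ is adjacent to $e$ but not to $d$. If so, $\{a,b,c,d,e\}\cup V(C)$ for a shortest cycle through $f$ is already full (now $d$ misses $e$ and $f$). If not, then $N(e)\subseteq N(d)$, so any second neighbor $e'\neq b$ of $e$ is also a neighbor of $d$; together with a non-neighbor $d'\neq e$ of $d$ and a shortest cycle through $d'$, the set $\{a,b,c,d,e,e'\}\cup V(C)$ is full. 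This is a two-branch argument rather than the three-level split you outlined, and it sidesteps the bookkeeping you anticipated around $g$.
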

\begin{proof}
Let $a$ be a triangle-type divalent vertex with neighbors $c,d$. Let $b$ be a seagull-type divalent vertex with neighbors $d,e$ such that $c \not = e$. Suppose $ce$ is an edge. If there exists a vertex $f$ non-adjacent to both $c$ and $d$, then $\{a,b,c,d,e\} \cup V(C)$, where $C$ is a shortest cycle passing through $f$, is a full set. Hence we may assume that every vertex outside $\{a,b,c,d,e\}$ has at least one of $c,d$ as neighbor. Let $c'$ be a non-neighbor of $c$, and $c''$ be a neighbor of $c'$ such that $c' \not = b$ and $c'' \not = d$. Let $d'$ be a non-neighbor of $d$, and $d''$ be a neighbor of $d'$ such that $d' \not = e$ and $d'' \not = c$. Now $\{a,b,c,d,e,c',c'',d',d''\}$ is a full set. 
 
Hence we may assume that $ce$ is not an edge. If there is a vertex $f$ adjacent to $e$ but not to $d$, then $\{a,b,c,d,e\} \cup V(C)$, where $C$ is a smallest cycle passing through $f$ is a full set. Hence we may assume that every vertex that is adjacent to $e$ is also adjacent to $d$. Let $e'$ be a vertex adjacent to $e$, and $d'$ be a vertex non-adjacent to $d$ such that $e' \not = b$ and $d' \not = e$. Now $\{a,b,c,d,e,e'\} \cup V(C)$, where $C$ is a smallest cycle passing through $d'$, is a full set. In all cases, we have established the existence of a full set of size at most $10$. The proof is complete. 
\end{proof}

\begin{proof}[\textbf{Proof of the main theorem}]
Let $G$ be a minimal non-MTG that is neither a cycle of length at least $5$ nor its complement. Suppose $n \geq 10$. By Proposition \ref{DIVALENTVERTICES}, $G$ has at least $5$ vertices that are either divalent or co-divalent. By going to the complement if necessary, we may assume that $G$ has at least $3$ divalent vertices. By Lemma \ref{ABSENCEOFTRIPLET}, two of them must have distinct neighborhoods. We have six cases depending on the type of these two divalent vertices (triangle-type or seagull-type) and whether they have $0$ or $1$ common neighbors. The previous six lemmas tell us that in each case $G$ has at most $10$ vertices. Hence $G$ has at most $10$ vertices. 

The complete list of non-cycle members of $\Forb(\MT)$ was generated using a computer program created in Wolfram Mathematica 10. The program takes a list of all graphs with at most $10$ vertices as input, and for each graph in the list determines whether or not it is a minimal non-MTG. First, it tests whether a graph $G$ is mock threshold by examining the degrees of its vertices and iteratively removing removable vertices. If it succeeds in removing all the vertices, then $G$ is mock threshold, as we have discovered an MT-ordering. Such a graph is discarded. If $G$ is not mock threshold, we generate all of its vertex-deleted subgraphs, and test each one of these in turn to see if it is mock threshold or not. If all vertex-deleted subgraphs of $G$ are mock threshold, we know that $G$ is a minimal non-MTG.
There are $318$ of them. They are shown in Figures \ref{ForbiddenMTGraphs7Vertices}, \ref{ForbiddenMTGraphs10Vertices}, \ref{ForbiddenMTGraphs8Vertices}, \ref{ForbiddenMTGraphs9VerticesPart1}, and \ref{ForbiddenMTGraphs9VerticesPart2}. This establishes the theorem. 
\end{proof}

Our main theorem can be restated as a criterion on weakly chordal graphs:

\begin{corollary}
 A weakly chordal graph is mock threshold if and only if it contains as an induced subgraph none of the $318$ forbidden induced subgraphs of Theorem \ref{FORBIDDEN}(b).
\end{corollary}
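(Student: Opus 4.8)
The plan is to derive the corollary directly from Theorem~\ref{FORBIDDEN} together with Corollary~\ref{WEAKLYCHORDAL}. The statement to prove is: a weakly chordal graph $G$ is mock threshold if and only if it contains, as an induced subgraph, none of the $318$ graphs listed in Theorem~\ref{FORBIDDEN}(b). The forward direction is immediate and requires no hypothesis of weak chordality: if $G$ is mock threshold, then by Proposition~\ref{CLOSEDUNDERINDUCEDSUBGRAPHS} every induced subgraph of $G$ is mock threshold, and each of the $318$ graphs is by construction a minimal non-MTG, hence not mock threshold; so none of them can occur as an induced subgraph of $G$.

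For the converse, suppose $G$ is weakly chordal and contains none of the $318$ graphs of Theorem~\ref{FORBIDDEN}(b) as an induced subgraph. By Theorem~\ref{FORBIDDEN}, $G$ is mock threshold if and only if it also contains no induced subgraph from Theorem~\ref{FORBIDDEN}(a), namely no cycle $C_n$ with $n\ge 5$ and no complement $\overline{C}_n$ with $n\ge 5$. So it suffices to rule out these induced subgraphs using weak chordality. Recall that a graph is weakly chordal exactly when neither it nor its complement contains an induced cycle of length at least $5$. A graph $G$ contains an induced $C_n$ ($n\ge5$) precisely when $G$ has an induced cycle of length $\ge 5$; and $G$ contains an induced $\overline{C}_n$ ($n\ge5$) precisely when $\overline{G}$ has an induced cycle of length $\ge 5$ (since the complement of an induced subgraph of $G$ is an induced subgraph of $\overline G$, and vice versa). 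Since $G$ is weakly chordal, both possibilities are excluded. Therefore $G$ contains none of the graphs in Theorem~\ref{FORBIDDEN}(a) or (b), and hence $G$ is mock threshold.

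I do not anticipate a genuine obstacle here, since the corollary is essentially a repackaging of the main theorem once one observes that ``weakly chordal'' is exactly the condition that kills the family in part~(a). The only point requiring a sentence of care is the bookkeeping with complements: one should state clearly that ``$G$ has an induced $\overline C_n$'' is the same as ``$\overline G$ has an induced $C_n$'', so that the two clauses in the definition of weak chordality line up with the two subfamilies in Theorem~\ref{FORBIDDEN}(a). Writing this out in full:

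\begin{proof}
If $G$ is mock threshold, then by Proposition~\ref{CLOSEDUNDERINDUCEDSUBGRAPHS} so is every induced subgraph of $G$; since each of the $318$ graphs of Theorem~\ref{FORBIDDEN}(b) is a minimal non-MTG and hence not mock threshold, none of them is an induced subgraph of $G$.

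Conversely, suppose $G$ is weakly chordal and contains no induced subgraph isomorphic to any of the $318$ graphs of Theorem~\ref{FORBIDDEN}(b). By Theorem~\ref{FORBIDDEN}, it remains only to check that $G$ contains none of the graphs in part~(a), that is, no induced $C_n$ and no induced $\overline{C}_n$ with $n\ge 5$. An induced $C_n$ in $G$ with $n\ge 5$ is precisely an induced cycle of length at least $5$ in $G$, and an induced $\overline{C}_n$ in $G$ with $n\ge 5$ corresponds, under complementation, to an induced cycle of length at least $5$ in $\overline{G}$. Since $G$ is weakly chordal, neither $G$ nor $\overline{G}$ has an induced cycle of length at least $5$, so neither type occurs. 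Hence $G$ contains no induced subgraph from Theorem~\ref{FORBIDDEN}(a) or~(b), and therefore $G$ is mock threshold.
\end{proof}
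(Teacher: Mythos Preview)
Your proof is correct and matches the paper's approach: the paper presents this corollary without proof, treating it as an immediate restatement of Theorem~\ref{FORBIDDEN} once one notes that weak chordality is precisely the exclusion of the family in part~(a). Your write-up makes this explicit, including the small but worthwhile observation that an induced $\overline{C}_n$ in $G$ is the same as an induced $C_n$ in $\overline G$.
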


The following observation singles out a structure common to all minimal non-MTGs on $10$ vertices or their complements, with the exception of the $10$-cycle.

\begin{lemma}[Butterfly Lemma]\label{L:butterfly}
Let $G$ be a $10$-vertex graph with vertices $x_i,y_i,z_i$, where $ 1 \leq i \leq 4$ and $z_1=z_2$ and $z_3=z_4$,  with edges $x_iy_i$, $x_iz_i$, $y_iz_i$, $y_iz_{i+2}$, $z_1z_3$, and where the $x_i$ are divalent. 
There may also be any edges $y_iy_j$ (see Figure \ref{F:butterfly}).
Then $G$ is a minimal non-MTG.
\end{lemma}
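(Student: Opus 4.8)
The plan is to establish the two halves of the definition of a minimal non-MTG directly: that $G$ is not mock threshold, and that $G-v$ is mock threshold for every vertex $v$ (which, by Proposition~\ref{CLOSEDUNDERINDUCEDSUBGRAPHS}, yields that every proper induced subgraph of $G$ is mock threshold).

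First I would unwind the edge description. Writing the two repeated $z$-vertices as $z_1=z_2$ and $z_3=z_4$, one checks (reading the subscript $i+2$ modulo $4$) that $z_1z_3$ is an edge, that every $y_i$ is adjacent to both $z_1$ and $z_3$, that $x_1,x_2$ are adjacent to $z_1$ while $x_3,x_4$ are adjacent to $z_3$, and that each $x_i$ is the divalent vertex with neighbourhood $\{y_i,z_i\}$. Hence $\delta(G)=2$, attained by each $x_i$, and $\Delta(G)=7$, attained by $z_1$ and $z_3$ (any $y_i$ has degree at most $3+3=6$, regardless of which edges $y_iy_j$ are present). Since $G$ has $n=10$ vertices and $2\le\delta(G)\le\Delta(G)\le n-3=7$, Lemma~\ref{MINDEGREEMAXDEGREE} gives that $G$ is not mock threshold.

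For the second half I would use the standard consequence of Lemma~\ref{REMOVABLE}: a graph is mock threshold if and only if it can be reduced to the empty graph by repeatedly deleting removable vertices; in particular, to prove $G-v$ mock threshold it suffices to reduce it by such deletions to a graph on at most four vertices, every one of which is mock threshold (Proposition~\ref{SMALLGRAPHS}). By the symmetries of the ``skeleton'' (the graph without the optional edges $y_iy_j$) it is enough to treat $v\in\{x_1,\,y_1,\,z_1\}$; the skeleton symmetries need not fix the edges $y_iy_j$, but that is irrelevant, as explained below. In $G-x_1$ the vertex $z_3$ has codegree $1$, so delete it; then $x_3,x_4$ are leaves, so delete them; then $z_1$ is dominating, so delete it; then $x_2$ is a leaf, so delete it, leaving only $\{y_1,y_2,y_3,y_4\}$. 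In $G-y_1$ the vertex $x_1$ is a leaf; after deleting it $z_3$ has codegree $1$; deleting it makes $x_3,x_4$ leaves; then $z_1$ is dominating; then $x_2$ is a leaf; we are left with $\{y_2,y_3,y_4\}$. In $G-z_1$ the vertices $x_1,x_2$ are leaves; after deleting them $z_3$ is dominating; deleting it makes $x_3,x_4$ leaves; we are left with $\{y_1,y_2,y_3,y_4\}$. In every case $G-v$ is reduced to at most four vertices, hence is mock threshold.

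The work here is purely the bookkeeping of these peeling sequences, and the one thing to watch is that the argument be uniform over all choices of the optional edges $y_iy_j$. This is automatic: the peeling never deletes a $y_i$ until only $y_i$'s remain, and at each stage the removability of a hub $z_1$ or $z_3$, or of an $x_i$, is determined entirely by the skeleton edges (the edges $y_iy_j$ are incident to no $z_k$ and do not affect whether an $x_i$ has become a leaf); the edges $y_iy_j$ influence only the terminal graph on the $y_i$, which is mock threshold regardless since it has at most four vertices. So there is no genuine obstacle; the content is the structural observation that however the $y_i$ are joined, deleting any single vertex of $G$ triggers a cascade in which the hubs and the pendant-like $x_i$ peel off and leave an innocuous graph on the $y_i$.
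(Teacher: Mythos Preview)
Your proof is correct and follows the same overall strategy as the paper: apply Lemma~\ref{MINDEGREEMAXDEGREE} to show $G$ is not mock threshold, reduce by the skeleton symmetry to $v\in\{x_1,y_1,z_1\}$, and exhibit peeling sequences for each $G-v$. Your peeling order---strip off the $z$'s and $x$'s first, leave the four $y_i$, and invoke Proposition~\ref{SMALLGRAPHS}---is in fact more robust than the paper's. The paper's stated sequence for $G-x_1$ is $z_3,x_3,x_4,y_3,y_4,y_1,z_1,x_2$, but after removing $z_3,x_3,x_4$ the vertex $y_3$ need not be removable for every choice of optional edges (for instance, if $y_1y_3$ is the only optional edge present, then $y_3$ has degree $2$ and codegree $3$ in the remaining $6$-vertex graph). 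Your explicit remark that the removability of each $z_k$ and $x_i$ along the way is independent of the optional $y_iy_j$ edges, combined with terminating at a graph on at most four $y_i$, is precisely what makes the argument uniform over all configurations of those edges.
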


\begin{figure}[hbt]
%BUTTERFLY SKELETON
\includegraphics[scale=.875]{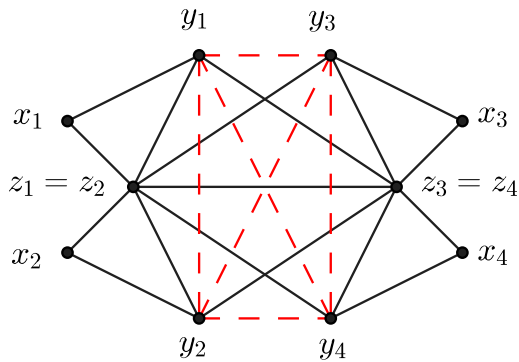}
\caption{The butterfly skeleton (solid lines), common to all the graphs of Lemma \ref{L:butterfly}, and the optional edges (dashed lines) that can be added at will to generate all forbidden induced subgraphs of order $10$ other than $C_{10}$ and $\overline C_{10}$.  For all the non-isomorphic graphs see Figure \ref{ForbiddenMTGraphs10Vertices}.}
\label{F:butterfly}
\end{figure}

\begin{proof}
Every vertex in $G$ has at least two neighbors and at least two non-neighbors, so by Lemma \ref{MINDEGREEMAXDEGREE}, $G$ is not mock threshold. Consider $G-x_1$. The following is a sequence of removable vertices: $z_3,x_3,x_4,y_3,y_4,y_1,z_1,x_2$.  In both $G-y_1$ and $G-z_1$, the vertex $x_1$ becomes removable and we then have the previous sequence of removable vertices. By symmetry, deleting any vertex from $G$ results in a mock threshold graph.  Hence $G$ is a minimal non-MTG.
\end{proof}

Since a contracted mock threshold graph is still mock threshold (Proposition \ref{P:contract}), the list of forbidden subgraphs can be reduced by keeping only those that are minimally non-MTG under contraction as well as taking induced subgraphs.  Notably, only one cycle is needed:  $C_5$ is the only necessary exclusion.  That does not apply to cycle complements; they are all minimally non-MTG under contraction; we overcome this difficulty in Theorem \ref{T:contract-induced-min} by combining a graph and its complement.  
By inspecting the other 318 forbidden induced subgraphs we found those that are minimally non-MTG under both operations.

\begin{theorem}[Forbidden Graphs under Induction and Contraction]\label{T:contract-induced-min}
The number of forbidden induced subgraphs other than $C_5$ and cycle complements that are minimally non-mock threshold under contraction is $241$.  They are the graphs of Figures \ref{ForbiddenMTGraphs7Vertices}--\ref{ForbiddenMTGraphs10Vertices}, and their complements, that are marked as contraction minimal.

A graph is mock threshold if and only if it and its complement have none of the following as a contraction of an induced subgraph, or equivalently as an induced subgraph of a contraction:
\begin{enumerate}[{\rm(a)}]
\item $C_5$.
\item The forbidden induced subgraphs shown in Figures \ref{ForbiddenMTGraphs7Vertices}--\ref{ForbiddenMTGraphs10Vertices} that are marked as minimal under contraction.
\end{enumerate}
\end{theorem}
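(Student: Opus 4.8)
I would prove Theorem~\ref{T:contract-induced-min} by combining Theorem~\ref{FORBIDDEN} with Propositions \ref{CLOSEDUNDERINDUCEDSUBGRAPHS}, \ref{CLOSEDUNDERCOMPLEMENTATION}, and \ref{P:contract}, together with a finite computer check for the $318$ small graphs. To begin I would record that ``contraction of an induced subgraph'' and ``induced subgraph of a contraction'' both describe the usual induced-minor relation, that this relation is transitive, and that $\MT$ is closed under it (by Propositions \ref{CLOSEDUNDERINDUCEDSUBGRAPHS} and \ref{P:contract}) as well as under complementation (Proposition \ref{CLOSEDUNDERCOMPLEMENTATION}). The ``only if'' direction is then immediate: if $G\in\MT$ then also $\overline G\in\MT$, and since none of the listed graphs is mock threshold, neither $G$ nor $\overline G$ can contain one as an induced minor.

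For the ``if'' direction, suppose $G\notin\MT$; by Theorem~\ref{FORBIDDEN}, $G$ has an induced subgraph $F\in\Forb(\MT)$. If $F=C_n$ with $n\ge5$, then contracting $n-5$ edges of $F$ gives $C_5$, so $C_5$ is an induced minor of $G$. If $F=\overline C_n$ with $n\ge5$, then $\overline G$ has $C_n$ as an induced subgraph, so by the previous case applied to $\overline G$, $C_5$ is an induced minor of $\overline G$. Alongside this I would settle which cyclic forbidden graphs are contraction-minimal: $C_5/e\cong C_4$ and $C_5-v\cong P_4$ are mock threshold, so $C_5$ is contraction-minimal, while for $n\ge6$ we have $C_n/e\cong C_{n-1}\notin\MT$, so $C_n$ is not; and every $\overline C_n$ \emph{is} contraction-minimal, since $\overline C_n-v\cong\overline{P_{n-1}}$ is the complement of a forest, hence mock threshold, and in $\overline C_n/e$ the vertex produced by the contraction is either dominating or has codegree $1$ (a one-line case check on the distance in $C_n$ of the non-edge corresponding to $e$), hence removable, and deleting it leaves the complement of $C_n$ with two non-adjacent vertices removed---again the complement of a forest. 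The infinitely many $\overline C_n$ therefore cannot be captured by finitely many graphs, which is exactly why the statement tests both $G$ and $\overline G$: each $\overline C_n$ is detected through the induced subgraph $C_n$ of $\overline G$, hence through $C_5$.

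It remains to treat $F$ among the $318$ non-cyclic forbidden graphs, and by the previous paragraph we may assume $F$ is not a cycle complement. Here the finite computation does two jobs. First, for each of the $318$ it checks whether $F/e\in\MT$ for every edge $e$ (vertex deletion being automatic since $F$ is a minimal non-MTG); the non-cycle-complements that pass this test are the $241$ of the statement, establishing that count---call this set $S$. Second, for each of the remaining forbidden graphs (not a cycle complement, not contraction-minimal) it verifies that $F$ or $\overline F$ has some member of $\{C_5\}\cup S$ as an induced minor. Granting this: if $F\in S$ we are done immediately; otherwise $F$ or $\overline F$ has an induced minor in $\{C_5\}\cup S$, and since $F$ is an induced subgraph of $G$ and $\overline F$ an induced subgraph of $\overline G$, transitivity of the induced-minor relation finishes the proof.

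The step I expect to be the real obstacle is the second job of the computation, and the difficulty is conceptual rather than merely computational: complementation does not commute with contraction, so one cannot run a uniform induction on $|V(F)|$---passing from $F$ to a smaller non-mock-threshold contraction $F/e$ and then taking a complement to land in $\overline G$ is not a legal move, and such an induction breaks precisely when the obstruction hiding inside $F/e$ is a cycle complement $\overline C_m$ with $m\ge6$. What makes the theorem true is that every graph $F$ at issue has at most $10$ vertices, so ``$F$ or $\overline F$ has an induced minor in $\{C_5\}\cup S$'' is a finite assertion about a known list; verifying it by computer, exactly as in the proof of Theorem~\ref{FORBIDDEN}, is what completes the argument.
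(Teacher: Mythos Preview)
Your approach is essentially the paper's: reduce to Theorem~\ref{FORBIDDEN}, use the closure of $\MT$ under induced minors and complementation for the forward direction, and for the converse locate a forbidden induced subgraph and contract down to a contraction-minimal one, with the count of $241$ established by computer. The paper's proof is exactly this, phrased as follows: choose $H$ in $G$ or $\overline G$ to be a cycle $C_k$ or one of the displayed figure graphs; if $H$ is contraction-minimal we are done, and otherwise some contraction $H/S$ contains a forbidden, contraction-minimal induced subgraph $H'$, which is then an induced minor of $G$ (or $\overline G$).

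Where you go further than the paper is in isolating, and proposing to close, a point the paper leaves implicit. You observe that the descent to a contraction-minimal $H'$ could in principle terminate at a cycle complement $\overline C_m$ with $6\le m\le 9$ (you correctly verify that every $\overline C_n$ is contraction-minimal), which is not on the list~(a)--(b); and since complementation does not commute with contraction, one cannot simply pass to $\overline G$ after having contracted. The paper's proof does not address this case; it simply asserts ``we are done'' once $H'$ is found. Your remedy---a second finite computer check that each non-minimal member of the $318$ (or its complement) has an induced minor in $\{C_5\}\cup S$---is a legitimate way to certify the step the paper takes for granted, and it is entirely in the spirit of the paper's computer-assisted argument. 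So: same route, but you have spotted and patched a gap that the paper's write-up leaves open.
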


\begin{proof}
A graph is minimally non-mock threshold under contraction and taking induced subgraphs if and only if it is in $\Forb(\MT)$ and every edge contraction is mock threshold.  We carried out a computer search for this property with the result listed in Figures \ref{ForbiddenMTGraphs7Vertices}-\ref{ForbiddenMTGraphs10Vertices}.

The equivalence of the two criteria is proved by considering a non-MTG $G$.  By Theorem \ref{FORBIDDEN}, $G$ or $\overline G$ has an induced subgraph $H$ that is $C_k$ for some $k\geq5$ or is one of those shown in the figures.  If $H$ is contraction minimal and we are done.  If $H$ is not contraction minimal, some contraction $H/S$ ($S$ is the set of contracted edges) has an induced subgraph $H'$ that is forbidden and contraction minimal.  Then $G/S$ (or $\overline G/S$) has $H/S$ as an induced subgraph, of which $H'$ is an induced subgraph, and we are done.
\end{proof}

%%%%%%%%%%%%%%%%%%%%%%%%
%%%%%%%%%%%%%%%%%%%%%%%%
\section{Claw-freeness}\label{clawfree}

A \emph{claw} is an induced subgraph that is isomorphic to $K_{1,3}$; its trivalent vertex is called its \emph{center}.  A graph is \emph{claw-free} if it contains no claw.  Claw-free graphs are interesting for several reasons, initially as a generalization of line graphs and later because of good algorithmic properties (see the survey \cite{CFGS}).

To begin with we describe the claw-free threshold graphs.  The characterization is easy to prove from a threshold ordering.

\begin{proposition}\label{P:clawfreethresh}
A graph is claw-free threshold if and only if it consists of isolated vertices and possibly one component that has a vertex whose deletion results in a complete graph.
\end{proposition}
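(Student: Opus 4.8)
The plan is to argue from the vertex-ordering characterization of threshold graphs: $G$ is threshold iff it has an ordering $v_1,\dots,v_n$ in which, for each $i$, the vertex $v_i$ has degree $0$ (call it \emph{isolated-added}) or degree $i-1$ (\emph{dominating-added}) in $G{:}\{v_1,\dots,v_i\}$. Two elementary facts drive everything. First, in such an ordering the edge $v_iv_j$ with $i<j$ is present if and only if $v_j$ is dominating-added; consequently the isolated-added vertices form an independent set, and (taking $v_k$ to be the last dominating-added vertex) a threshold graph is the disjoint union of some isolated vertices and at most one connected component $H$, which is again a connected threshold graph — and if $|V(H)|\ge 2$ then the last vertex of $H$ in a threshold ordering is dominating-added, so $H$ has a dominating vertex. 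Since appending or deleting isolated vertices changes neither the threshold property nor claw-freeness, it suffices to prove: a connected graph $H$ is claw-free threshold if and only if it has a vertex $w$ with $H-w$ complete.

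\emph{If.} Suppose $H$ is connected, $m=|V(H)|\ge 2$, and $H-w=K_{m-1}$; then $H$ is $K_{m-1}$ together with $w$ adjacent to a nonempty set $S$ of its vertices. This is threshold: each vertex of $S$ is dominating in $H$, so deleting the vertices of $S$ one by one leaves $K_{m-1-|S|}$ plus the now-isolated $w$, which is threshold. It is claw-free: a claw needs a center with three pairwise non-adjacent neighbors, but $N(w)=S$ is a clique, and the neighborhood of a vertex of $K_{m-1}$ consists of the remaining (pairwise adjacent) vertices of $K_{m-1}$ together with possibly $w$, so it has independence number at most $2$. The case $m\le 1$ is trivial, and isolated vertices plainly create no claws.

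\emph{Only if.} Let $H$ be connected, claw-free, and threshold, with $m=|V(H)|\ge 2$. Fix a threshold ordering; its last vertex $v_m$ is dominating-added, hence adjacent to everything, and $H':=H-v_m$ is threshold with the inherited ordering. If $H'$ had three pairwise non-adjacent vertices, they together with $v_m$ would form a claw, so $\alpha(H')\le 2$. By the first fact the isolated-added vertices of $H'$ are pairwise non-adjacent, so there are at most two of them, and since $v_1$ is always isolated-added there are exactly one or two. If exactly one, every other vertex of $H'$ is dominating-added and $H'=K_{m-1}$, so $H=K_m$ and deleting any vertex works. If exactly two, say $v_1$ and $v_q$ with $q\ge 2$, then $v_1,\dots,v_{q-1}$ induce a clique, $v_{q+1},\dots,v_{m-1}$ are dominating-added hence adjacent to all earlier vertices, so $H'-v_q$ is the clique $K_{m-2}$; adding $v_m$ back (adjacent to everything in $H'-v_q$) gives $H-v_q=K_{m-1}$. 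Thus $w=v_q$ works, completing the proof.

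The argument is largely routine; the point needing real care is the structural input that isolated-added vertices in a threshold ordering are mutually non-adjacent, since it is exactly this that converts claw-freeness of $H$ into the bound $\alpha(H')\le 2$ and then pins down $H'$ to a clique or a clique with one extra vertex. One could instead pass to complements — $\alpha(H')\le 2$ says $\overline{H'}$ is a triangle-free threshold graph, hence a star together with isolated vertices, and complementing back recovers the same form of $H'$ — but the direct ordering argument avoids invoking the classification of triangle-free threshold graphs and handles the small/degenerate cases ($G$ edgeless, $H$ a single vertex, $H=P_3$) uniformly.
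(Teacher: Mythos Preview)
Your proof is correct and follows exactly the approach the paper indicates: the paper states only that ``the characterization is easy to prove from a threshold ordering'' and gives no further details, and your argument is a clean execution of precisely that strategy. Your reduction to the connected case, together with the observation that the isolated-added vertices in a threshold ordering are pairwise non-adjacent (so claw-freeness forces at most two of them in $H'=H-v_m$), is the natural way to fill in the omitted proof.
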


We can explicitly describe all mock threshold graphs that are claw-free.  
\emph{Hanging a path off a vertex} $v$ of $G$ means that the path is attached to $G$ by identifying $v$ with an endpoint of the path; we assume that any such path has positive length.

\begin{theorem}\label{T:clawfree}
A graph $G$ is a claw-free mock threshold graph if and only if either every component is a path, or else just one component $G_1$ is not a path, $G_1$ consists of a $2$-core $G_2$ and at most one path hanging off each vertex of $G_2$ whose $G_2$-neighborhood is a clique, and the complement of $G_2$ is one of the following types {\rm I--\ref{T:clawfreeLAST}}.
\begin{enumerate}[{\rm \quad I.}]
\item\label{T:clawfreeF} $\overline G_2$ is a forest of order at least $3$ in which every vertex has at least $2$ non-neighbors. %(Figure \ref{F:clawfreeF}).  %From bipartite $H$.  Order 1 or 2
\item\label{T:clawfreeB} $\overline G_2$ has a component that consists of $K_{2,s}$, $s\geq2$, with a tree optionally attached to each vertex; all other components are trees; and every vertex has at least $2$ non-neighbors. %(Figure \ref{F:clawfreeB}). %From bipartite $H$.
\item\label{T:clawfree3} $\overline G_2$ is a triangle with one or more pendant edges attached to each of at least two vertices. %(Figure \ref{F:clawfree3}).  %From triangle case $t=3$.
%f
\item\label{T:clawfree1} $\overline G_2$ consists of $K_{2,p}$ with $p\geq2$, whose vertex classes are $X=\{x_1,x_2\}$ and $Z$ of order $p$; also $Y$ of order $s\geq2$ of which all elements are adjacent to $x_1$ and at least one is adjacent to $x_2$; also $w$ adjacent to every vertex in $X\cup Y$; and at least one pendant edge incident to $w$ (Figure \ref{F:clawfree1}).  %Triangle case $t=1$.  Pendant edge needed so $x_1$ is not (sub)dominant.
\item\label{T:clawfree20} $\overline G_2$ is $K_{1,1,r}$ with $r\geq2$, whose vertex classes are $\{v\}$, $\{w\}$, and $X$; and $\alpha$ and $\beta$ pendant edges incident to $v$ and $w$, respectively, where $\alpha,\beta\geq2$ (Figure \ref{F:clawfree2}).  %From triangle case $t=2, s=0$.
\item\label{T:clawfree20p} $\overline G_2$ consists of $K_{1,1,r}$ with $r\geq2$, whose vertex classes are $\{v\}$, $\{w\}$, and $X$; $K_{2,p}$ with $p>0$, whose vertex classes are $\{w,z\}$ and $P$; the edge $vz$; and $\alpha$ and $\beta$ pendant edges incident to $v$ and $w$, respectively, where $\alpha\geq1$ and $\beta\geq0$, but $\beta\geq1$ if $p=1$ (Figure \ref{F:clawfree2}).  %From triangle case $t=2, s=0$.
\item\label{T:clawfree21} $\overline G_2$ consists of $K_{1,1,r}$ with $r\geq2$, whose vertex classes are $\{v\}$, $\{w\}$, and $X$; $K_{1,r}$ whose vertex classes are $\{y\}$ and $X$; and $\alpha$ and $\beta$ pendant edges incident to $v$ and $w$, respectively, where $\alpha,\beta\geq1$ (Figure \ref{F:clawfree2}).  %From triangle case $t=2, s=1, p=0$.
\item\label{T:clawfree21q} $\overline G_2$ consists of $K_{1,1,r}$ with $r\geq2$, whose vertex classes are $\{v\}$, $\{w\}$, and $X$; $K_{1,r}$ whose vertex classes are $\{y\}$ and $X$; $K_{2,q}$ with $p>0$, whose vertex classes are $\{w,y\}$ and $Q$; and $\alpha$ and $\beta$ pendant edges incident to $v$ and $w$, respectively, where $\alpha\geq1$ and $\beta\geq0$ (Figure \ref{F:clawfree2}).  %From triangle case $t=2, s=1, p>0$.
\item\label{T:clawfree22} $\overline G_2$ consists of $K_{1,1,r}$ with $r\geq2$, whose vertex classes are $\{v\}$, $\{w\}$, and $X$; $K_{r,s}$ with $s\geq1$, whose vertex classes are $X$ and $Y$; and $\alpha$ and $\beta$ pendant edges incident to $v$ and $w$, respectively, where $\alpha+\beta>0$ if $r=2$ (Figure \ref{F:clawfree2}).  %From triangle cases $t=2$ with $s=1$ and $s\geq2$.
\label{T:clawfreeLAST}
\end{enumerate}
\end{theorem}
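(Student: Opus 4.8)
The plan is to prove the two implications separately: the ``if'' direction is a routine verification, while the ``only if'' direction needs first a structural reduction and then a lengthy case analysis that produces types~\ref{T:clawfree3}--\ref{T:clawfreeLAST}.

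First I would carry out the reduction for the ``only if'' direction. Let $G$ be a claw-free mock threshold graph. A connected claw-free forest is a path, since a tree vertex of degree $\geq 3$ would center a claw; hence a non-path component contains a cycle, and by Lemma~\ref{DISJOINTCYCLES} two components with cycles would give two disjoint induced cycles, impossible by Proposition~\ref{CLOSEDUNDERINDUCEDSUBGRAPHS}. So at most one component contains a cycle; if none does, every component is a path and we are in the first alternative, so assume one component $G_1$ contains a cycle, every other component being a claw-free forest, hence a path. Let $G_2$ be the $2$-core of $G_1$. Then $G_2$ is nonempty, claw-free, and connected (two components of $G_2$ would again be two disjoint cycles). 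Since every cycle of $G_1$ lies in $G_2$, the graph $G_1-V(G_2)$ is a forest; a second attachment edge from one of its tree components to $G_2$ would create a cycle outside $G_2$, so each component is attached to $G_2$ at one vertex by one edge, and claw-freeness forces each such tree to be a path. Finally, a claw analysis at an attachment vertex $v$ shows that a path can hang off $v$ only when $N_{G_2}(v)$ is a clique, and that at most one can (two path-ends together with a $G_2$-neighbor of $v$ would center a claw at $v$). Conversely, hanging such paths off $G_2$ creates no claw, and since the path vertices are exactly what is pruned to reach the $2$-core, doing so preserves the mock-threshold property (Lemma~\ref{REMOVABLE}, Proposition~\ref{2-CORE}). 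This reduces the theorem to characterizing the claw-free mock threshold graphs $G_2$ with $\delta(G_2)\geq 2$.

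The key move is then to pass to $H := \overline{G_2}$: by Proposition~\ref{CLOSEDUNDERCOMPLEMENTATION}, $H$ is mock threshold; the condition $\delta(G_2)\geq 2$ says exactly that every vertex of $H$ has at least two non-neighbors; and $G_2$ being claw-free says exactly that $H$ has no induced $K_3\cup K_1$ (the complement of the claw). Now split on whether $H$ has a triangle. If it does not, then $H$ is bipartite, because a shortest odd cycle would be chordless, hence an induced $C_\ell$, which is a triangle for $\ell=3$ and a forbidden hole for $\ell\geq 5$ (Proposition~\ref{HOLESANDANTIHOLES}); so $H$ is a bipartite mock threshold graph, and Proposition~\ref{BIPARTITE} together with the two-non-neighbors condition yields exactly types~\ref{T:clawfreeF} and \ref{T:clawfreeB}. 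If $H$ does contain a triangle $T$, then the absence of an induced $K_3\cup K_1$ forces every vertex of $H$ to have a neighbor in $T$, indeed in every triangle of $H$.

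The hard part will be the enumeration in this last case, which is where types~\ref{T:clawfree3}--\ref{T:clawfreeLAST} come from. I would fix a maximal ``book'' $K_{1,1,r}$ in $H$ (an edge $vw$ together with all of its common neighbors $X$) and classify the remaining vertices by their attachment to $\{v,w\}$ and to $X$, pruning impossible configurations at each step using the forbidden holes and antiholes, weak chordality (Corollary~\ref{WEAKLYCHORDAL}), Lemma~\ref{MINDEGREEMAXDEGREE}, and the removable-vertex structure of mock threshold graphs (the top vertex of an MT-ordering of $H$ is removable, hence a leaf or an isolated vertex here). The surviving configurations are precisely the listed combinations of $K_{1,1,r}$, $K_{1,r}$, $K_{2,p}$, and $K_{r,s}$ with pendant edges, and bookkeeping of which pendant edges are actually forced is what produces the various side inequalities on $\alpha,\beta,p,\ldots$. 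Finally, the ``if'' direction is checked type by type: claw-freeness of $G_2$ amounts to confirming that there is no induced $K_3\cup K_1$ in each $\overline{G_2}$, and mock-threshold-ness follows by displaying an explicit sequence of removable-vertex deletions reducing each $\overline{G_2}$ to $K_1$ (equivalently an MT-ordering of $G_2$), after which the pendant paths and path components are appended harmlessly, each of their vertices being a leaf or an isolated vertex at the moment it is added.
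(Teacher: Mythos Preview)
Your reduction to the $2$-core $G_2$ and the passage to $H=\overline{G_2}$, together with your treatment of the triangle-free case via Proposition~\ref{BIPARTITE}, match the paper's proof essentially line for line. The divergence is in how you organize the case where $H$ contains a triangle. The paper does not start from a maximal book $K_{1,1,r}$; instead it introduces the parameter $t=$ the number of vertices of $H$ that lie in \emph{every} triangle, and splits into $t=3,2,1,0$. This invariant is what makes the enumeration short: $t=3$ means $H$ has a unique triangle and gives Type~\ref{T:clawfree3} immediately; $t=0$ is eliminated in two lines (a vertex of degree $\leq 1$ would force its unique neighbor to lie in every triangle, so $\delta(H)\geq 2$, and with the codegree condition Lemma~\ref{MINDEGREEMAXDEGREE} gives a contradiction); $t=1$ yields Type~\ref{T:clawfree1} after a bipartite analysis of $H-w$ where $w$ is the unique common vertex; and $t=2$ is precisely the situation in which your book $K_{1,1,r}$ appears naturally, with $\{v,w\}$ the two common vertices and $X=N(v)\cap N(w)$, giving Types~\ref{T:clawfree20}--\ref{T:clawfree22} by a subcase split on $s=|Y|$.

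Your book-first strategy can in principle be pushed through, but it blurs the distinction between $t=1$ and $t=2$: in Type~\ref{T:clawfree1} the relevant structure is anchored at a single vertex $w$ rather than at an edge $vw$, and a ``maximal book'' there would be on an edge $wx_1$ with the wing vertices coming from $Y$, which does not sit symmetrically with the $t=2$ picture. You would also still need, somewhere, the argument that disposes of $t=0$. So your plan is not wrong, but the paper's $t$-parameter is the missing organizing idea that turns ``prune impossible configurations'' into a finite, clean case list.
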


Note that in Type \ref{T:clawfreeF} the forbidden forests are a star $S_k$, a disjoint union $S_k \cup K_1$, and $S_k$ with a pendant edge attached to a non-central vertex.

\begin{figure}[hbt]
%PICTURE FOR $t=1$
\includegraphics[scale=.5]{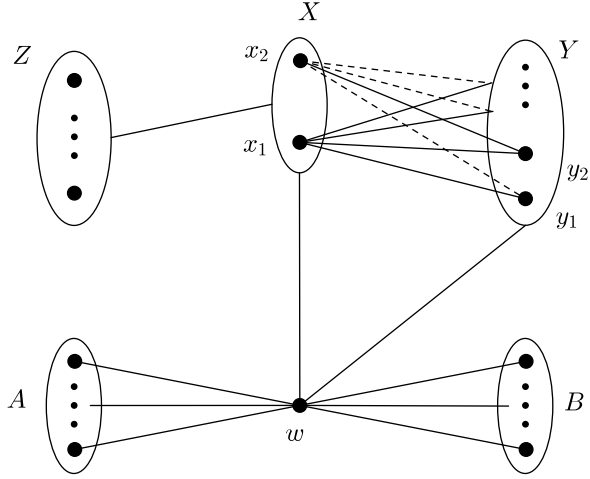}
\caption{A graph whose complement is claw-free, that has a triangle and has a unique vertex that belongs to every triangle ($t=1$).  If $B=\emptyset$, this is a mock threshold graph.  The dashed lines represent edges that may or may not exist.}
\label{F:clawfree1}
\end{figure}

\begin{figure}[hbt]
%PICTURE FOR $t=2$
\includegraphics[scale=.375]{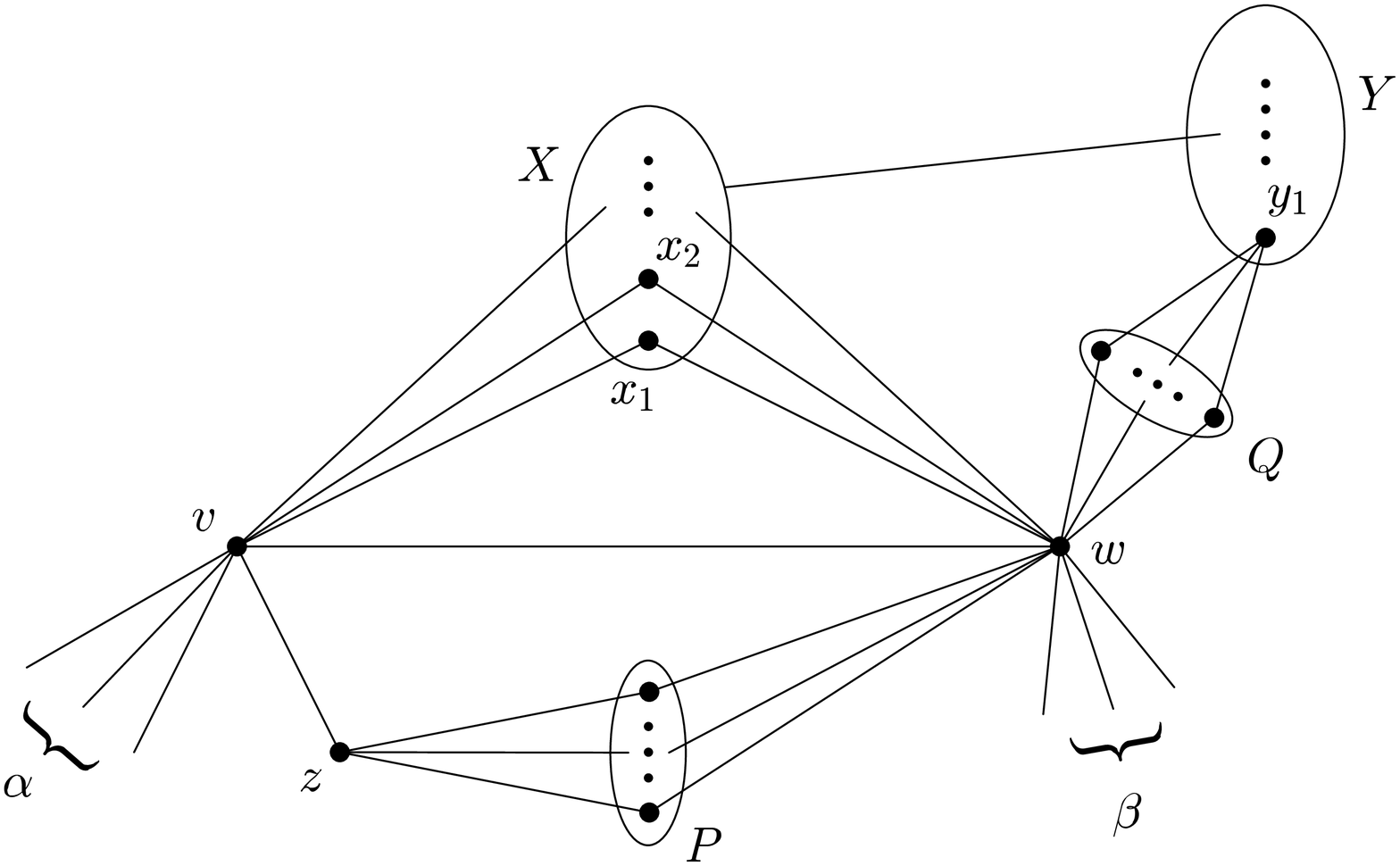}
\caption{A graph whose complement is claw-free, having a triangle, and containing exactly two vertices that are in every triangle ($t=2$).  We assume $|X|\geq2$.  The graph is mock threshold provided that $P\neq\emptyset$ implies $Y=Q=\emptyset$; $Q\neq\emptyset$ implies $P=\emptyset$ and $|Y|=1$; $|Y|\geq2$ implies $P=Q=\emptyset$; $|Y|>2$ implies $|X|=2$.  If $P=\emptyset$, then $z$ may be omitted.}
\label{F:clawfree2}
\end{figure}

\begin{proof}
Consider a mock threshold graph $G$ that is claw-free.  A component that is a tree cannot have a vertex of degree greater than 2; thus, it is a path.  Two components that are not trees would give an induced subgraph that is a disjoint union of cycles, contrary to Lemma \ref{DISJOINTCYCLES}.  Thus, if $G$ is not a forest it has exactly one component, say $G_1$, that is not a tree.  Letting $G_2$ be the 2-core of $G_1$, it is clear that $G_1$ is $G_2$ together with trees that are attached to vertices of $G_2$ by identifying a vertex of the tree with a vertex of $G_2$.  These trees must be paths attached to $G_2$ at an endpoint in order to avoid creating claws in $G_1$.  
If two paths are hung off the same vertex $v$, or if one path is hung off $v$ and there are two non-adjacent neighbors of $v$ in $G_2$, then $G_1$ contains a claw with center $v$; therefore, $G_1$ must be constructed from its 2-core as the theorem states, and the remainder of the proof consists in characterizing $G_2$.  The best way to do so is to characterize its complement, $\overline G_2$, which we call $H$.

As $G_2$ must itself be a claw-free mock threshold graph (Proposition \ref{2-CORE}), we know that $H$ is mock threshold (by Proposition \ref{CLOSEDUNDERCOMPLEMENTATION}), every vertex has at least two non-neighbors (since $\overline H$ is a 2-core), and its complement is claw-free.  The first and last properties imply that $H$ has a removable vertex, and that if $uvw$ is a triangle in $H$, then every vertex is adjacent to at least one of $u, v, w$ (then we say the vertex is \emph{adjacent to the triangle}).  Either $H$ is triangle-free, or it is not.  In the latter case we define $t$ to be the number of vertices of $H$ that belong to every triangle in $H$.  There are consequently five cases in the proof, according as $H$ has no triangles, or it has a triangle and $t=0,1,2,$ or $3$.

\emph{Case 1.  $H$ is triangle-free.}  Then $H$ is bipartite, since it has no long odd cycles.  Bipartite mock threshold graphs were characterized in Theorem \ref{BIPARTITE}.  Every such graph is a possible 2-core complement $H$.  That gives Type \ref{T:clawfreeF} of the theorem.

\emph{Case 2.  $H$ has a triangle and $t=3$.}  That is, $H$ has exactly one triangle, $T$.  Since every other vertex of $H$ must be adjacent to $T$ but cannot be adjacent to more than one vertex of $T$ without forming a second triangle, $H$ consists of a triangle with any number of pendant edges hanging off each vertex.  In order for $\overline H$ to be a 2-core, at least two vertices of $T$ need a pendant edge.  This $H$ is clearly mock threshold; thus, this case is characterized and we have Type \ref{T:clawfree3}.

\emph{Case 3.  $H$ has a triangle and $t=0$.}  This case is impossible.  A vertex not in a triangle must have at least two neighbors, since if it had only one neighbor, that neighbor is not in every triangle.  Thus, $H$ has minimum degree at least 2.  Its complement being a 2-core, it also has minimum codegree at least 2.  Therefore $H$ is not mock threshold, contradicting our hypothesis that $\overline H$ is mock threshold.  That is, Case 3 does not exist.

\emph{Case 4.  $H$ has a triangle and $t=1$.}  
We begin with a lemma that certain graphs, shown in Figure \ref{F:clawfree1}, are mock threshold with claw-free complement.  The lemma includes the case $t=1$ of our theorem but it is less restrictive since it does not require the complement to be a 2-core.

\begin{lemma}\label{L:clawfree1}
A graph is mock threshold and has claw-free complement if it consists of $K_{2,p}$ with $p>0$, whose vertex classes are $X=\{x_1,x_2\}$ and $Z$ of order $p$; also $Y$ of order $s>0$ of which all elements are adjacent to $x_1$ and at least one is adjacent to $x_2$; also $w$ adjacent to every vertex in $X\cup Y$; and any number of pendant edges incident to $w$.
\end{lemma}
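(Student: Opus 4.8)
The plan is to verify the two assertions of the lemma separately: that the graph $H$ described in the statement is mock threshold, and that $\overline H$ is claw-free.

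\emph{Claw-freeness of $\overline H$.} Since $\overline{K_{1,3}}=K_3\cup K_1$, the complement $\overline H$ is claw-free if and only if $H$ has no induced subgraph consisting of a triangle together with a vertex adjacent to none of its three vertices. First I would show that every triangle of $H$ contains $w$: in $H-w$ all remaining edges run between $\{x_1,x_2\}$ and $Y\cup Z$, because $Y$, $Z$, and the set of pendant vertices are each independent and $x_1\not\sim x_2$; hence $H-w$ is bipartite and triangle-free. Next, in a triangle $\{w,u,v\}$ of $H$ the vertices $u,v$ are neighbors of $w$, so they lie in $X\cup Y$ (pendant vertices are leaves and cannot be in a triangle), and since $Y$ is independent at least one of $u,v$ lies in $X$. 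Finally, every vertex of $X\cup Y$ and every pendant vertex is adjacent to $w$, and every vertex of $Z$ is adjacent to both vertices of $X$; so every vertex of $H$ outside $\{w,u,v\}$ is adjacent either to $w$ or to the $X$-vertex of the triangle. Thus $H$ has no induced $K_3\cup K_1$, and $\overline H$ is claw-free.

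\emph{Mock thresholdness of $H$.} I would exhibit a sequence of deletions of removable vertices reducing $H$ to a single vertex; repeated application of Lemma \ref{REMOVABLE} then shows $H\in\MT$. The sequence is: (1) delete all pendant vertices at $w$, each of degree $1$; (2) delete $x_1$, whose only non-neighbor in the current graph is $x_2$, so $\bard(x_1)=1$; (3) delete every $y\in Y$ with $y\not\sim x_2$, each of which now has $w$ as its unique neighbor; (4) delete $x_2$, which is now adjacent to $w$, to all of $Z$, and to every remaining vertex of $Y$, hence is dominating with $\bard(x_2)=0$; (5) delete the remaining vertices of $Y$, each now a leaf at $w$; (6) delete $w$ and the vertices of $Z$, which are now isolated. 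At every step the deleted vertex is removable in the current graph, so $H$ is mock threshold.

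I do not expect a serious obstacle. The only care needed is the bookkeeping in the deletion sequence — checking at each step that the claimed degree or codegree is correct — which uses $p\geq1$ only to the extent that $Z$ is nonempty (the argument is in fact valid regardless), and uses the hypothesis that some $y\in Y$ is adjacent to $x_2$ only to guarantee that step (5) is non-vacuous. The triangle enumeration in the first part is likewise routine once one notes that the edges of $H$ not incident with $w$ form a bipartite graph.
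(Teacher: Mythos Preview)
Your proof is correct and follows essentially the same approach as the paper. For claw-freeness the paper simply asserts that ``every vertex is adjacent to every triangle,'' which is exactly what your more detailed triangle analysis establishes; for mock thresholdness the paper also removes the pendants and then $x_1$ first, but then removes $Z$ (now leaves on $x_2$) before noting that $w$ dominates the rest, whereas you postpone $Z$ to the end---a cosmetic difference in the deletion order, not a different idea.
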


\begin{proof}
Since every vertex is adjacent to every triangle, the complement is claw-free.  

The pendant edges on $w$ are removable.  Then the only non-neighbor of $x_1$ is $x_2$, so $x_1$ is removable.  Then all vertices in $Z$ are leaves; deleting them, the remaining graph is clearly mock threshold with dominating vertex $w$.
\end{proof}

Now we assume that $H$ has a triangle, exactly one vertex $w$ belongs to every triangle, and every vertex is adjacent to every triangle.  The fact that only $w$ is in every triangle implies that there are two triangles whose only common part is $w$; let them be $wx_1y_1$ and $wx_2y_2$.  There are at least two vertices not adjacent to $w$, say $z_1$ and $z_2$, and each must be adjacent to one (and only one) of $x_1,y_1$ and one of $x_2,y_2$.  Let us say $z_1$ is adjacent to $x_1,x_2$.  As $H-w$ is triangle-free, it is bipartite; therefore it is impossible for $z_2$ to be adjacent to $x_1$ or $y_2$.  If $z_2$ is adjacent to $y_1,y_2$, then the induced subgraph on $\{w,x_1,x_2,y_1,y_2,z_1,z_2\}$ has no removable vertex, so it is not mock threshold.  It follows that every $z \in Z$ is adjacent to $x_1$ and $x_2$ and not to $y_1$ or $y_2$.

The vertices, other than $w$, of all triangles induce a bipartite graph; call the two vertex classes $X$ and $Y$, chosen so each $z\in Z$ is adjacent to every $x\in X$ (and no $y\in Y$).  Thus, $x_1,x_2\in X$ and $y_1,y_2\in Y$.  Furthermore, since every vertex in $X\cup Y$ is in a triangle, each $y \in Y$ is adjacent to at least one $x\in X$ and vice versa.  It follows that no $y\in Y$ is adjacent to any $z\in Z$, for if it were then $xyz$ would be a triangle for some $x\in X$.

Any vertex in $N(w) - (X\cup Y)$ cannot be adjacent to another neighbor of $w$, since it is not in a triangle.  Therefore, it is a leaf or it is adjacent to at least one $z\in Z$.  Let $B$ be the set of leaf neighbors of $w$ and let $A = N(w) - (X\cup Y\cup B)$; that is, $A$ contains the $w$-neighbors that are not leaves and not in triangles so $V(H) = \{w\} \cup X \cup Y \cup Z \cup A \cup B$.

We show that $|A \cup X| \leq 2$, from which it follows that $A=\emptyset$ and $X=\{x_1,x_2\}$.  Every vertex in $H-B$ has degree at least 2, and the only ones whose codegree can be less than 2 are the $x\in X$.  However, the codegree of $x$ is at least $|A\cup X|-1$; therefore $|A\cup X|\leq2$.  Furthermore, since $x_1$, say, must have codegree 1 and it has the non-neighbor $x_2$, it must be adjacent to all $y\in Y$.  On the other hand, $x_2$ only has to be adjacent to $y_2$.  We have now shown that in Case 4, $H$ is a graph of Type \ref{T:clawfree1}.

\emph{Case 5.  $H$ has a triangle and $t=2$.}  
We begin with another lemma that certain graphs are mock threshold and have complement that is claw-free.  It includes the case $t=2$ but does not require the complement to be a 2-core.

\begin{lemma}\label{L:clawfree5}
A graph of the type in Figure \ref{F:clawfree2} (with $\alpha,\beta\geq0$, with any of $P,Q,X,Y$ possibly empty, and with the restrictions in the caption) is mock threshold and has claw-free complement. 
\end{lemma}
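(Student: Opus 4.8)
The plan is to verify the two claimed properties separately: that every graph of the type in Figure \ref{F:clawfree2} (with $\alpha,\beta\geq0$ and the caption restrictions) has claw-free complement, and that it is mock threshold. For the complement being claw-free, I would use the observation already established in Case 4/Case 5's setup: if every vertex of $H$ is adjacent to every triangle of $H$, then $\overline H$ is claw-free, since a claw in $\overline H$ centered at $v$ would be three pairwise non-adjacent neighbors of $v$ in $\overline H$, i.e.\ a triangle in $H$ none of whose vertices is $v$ and none adjacent to $v$. So the first step is: identify all triangles of the Figure \ref{F:clawfree2} graph $H$ (they all pass through both $v$ and $w$, since $v,w$ together with any $x\in X$ form a triangle and there are no others once one checks the pendant vertices, the $P$-vertices, the $Q$-vertices, and the $Y$-vertices are each in at most one "side"), and check that every vertex of $H$ — pendant neighbors of $v$ or $w$, members of $X$, $Y$, $P$, $Q$, and $z$ — is adjacent to $v$ or to $w$. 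This is a short finite case check given the edge set described in the caption.

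For mock threshold, the plan is to exhibit an explicit MT-ordering (equivalently, a sequence of removable-vertex deletions reducing $H$ to $K_1$), in the style of the proof of Lemma \ref{L:clawfree1}. I would peel off vertices in roughly this order: first all pendant edges at $v$ and at $w$ (the $\alpha$ and $\beta$ leaves) are removable; then, depending on which of the mutually-exclusive configurations the caption allows ($P\neq\emptyset\Rightarrow Y=Q=\emptyset$; $Q\neq\emptyset\Rightarrow P=\emptyset,|Y|=1$; $|Y|\geq2\Rightarrow P=Q=\emptyset$; $|Y|>2\Rightarrow|X|=2$), remove the vertices of $P$ (leaves once their pendant edges are gone, or of controlled low degree), then $Q$ similarly, then $Y$, then $X$, arranging in each sub-case that the vertex being deleted currently has degree or codegree at most $1$. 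Since the caption restrictions were presumably chosen precisely to make such an ordering exist, I expect each sub-case to reduce quickly; the key recurring move is that once auxiliary pendant/bipartite-leaf vertices are stripped, $v$ and $w$ become near-dominating or dominating in what remains, which is then a small graph on $X$ (plus possibly $z$ or one $Y$-vertex) that is easily MT.

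I would organize the mock-threshold verification by the four caption hypotheses, handling each exclusive regime as its own short paragraph: (i) $P=Q=\emptyset$, $|Y|$ arbitrary (with $|X|=2$ forced when $|Y|>2$) — essentially the Type \ref{T:clawfree1}/Lemma \ref{L:clawfree1} argument; (ii) $P\neq\emptyset$, so $Y=Q=\emptyset$ — peel pendants, then $P$, leaving $K_{1,1,r}$ plus the edge $vz$ and a bipartite piece, reducible via $v,w$ dominating; (iii) $Q\neq\emptyset$, so $P=\emptyset$, $|Y|=1$ — similar, with the single $Y$-vertex handled near the end; (iv) the degenerate overlaps. In each I just list the deletion sequence and note at each step why the deleted vertex is removable.

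The main obstacle will be the bookkeeping in the $Q\neq\emptyset$ and $P\neq\emptyset$ regimes: after the pendant and bipartite-leaf deletions, one must be careful that the vertex $z$ (and the single $Y$-vertex $y$, which in those regimes carries the extra $K_{2,q}$ or $K_{2,p}$ structure) indeed becomes removable at the right moment — this is exactly where the caption's numerical constraints ($p>0$, $\beta\geq1$ when $p=1$, etc.) are used, and getting the order right so that no vertex is "stuck" with degree $2$ and codegree $2$ simultaneously is the delicate part. Everything else is routine finite checking of the edge set against the definition of removable vertex.
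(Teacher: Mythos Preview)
Your approach is essentially the paper's, but there is one concrete slip and one organizational difference worth noting.

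The slip is in the claw-free part. You assert that every vertex of $H$---including the members of $Y$---is adjacent to $v$ or to $w$. That is false: in the Figure~\ref{F:clawfree2} graph the vertices of $Y$ are adjacent to all of $X$ but to neither $v$ nor $w$ (this is exactly what makes $Y$ the set of ``vertices non-adjacent to both $v$ and $w$'' in Case~5). Your overall criterion (every vertex adjacent to every triangle) is correct and is what the paper uses; the point is that the triangles are all of the form $vwx_i$, so a $y\in Y$ meets each triangle through $x_i$, not through $v$ or $w$. Fix the sentence and the argument goes through.

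For mock threshold, your four-regime case split by which of $P,Q,Y$ is present will work, but the paper's organization is tighter and avoids most of the bookkeeping you anticipate. After stripping the $\alpha+\beta$ leaves, the paper branches on $|Y|$: if $|Y|<2$ then $w$ has codegree at most $1$ (its only possible non-neighbor is the single element of $Y$), so $w$ is removable and $H-w$ is a forest or a $K_{2,s}$ with trees attached; if $|Y|\geq2$ then the caption forces $P=Q=\emptyset$, every $x\in X$ has codegree at most $1$, and deleting $X$ leaves a forest. This collapses your regimes (ii), (iii), (iv) into the single observation ``$w$ is removable when $|Y|\leq1$,'' so you never have to worry separately about $z$, $P$, or $Q$ becoming stuck.
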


\begin{proof}
All triangles are of the form $vwx_i$ and every vertex is adjacent to $v$ or $w$ or, for vertices in $Y$, to every $x_i\in X$.  Therefore the complement is claw-free.

We prove the graph $F$ is mock threshold.  Leaves are removable, so we may assume $\alpha=\beta=0$ in the figure.  If $|Y|<2$, then $w$ is removable and $F-w$ is mock threshold because it is either a forest or, if $|Y|\geq2$, $K_{2,s}$ with trees attached.  If $|Y|\geq2$, then $P=Q=\emptyset$ so every $x \in X$ is removable; the remaining graph is a forest.
\end{proof}

The lemma proves that Types \ref{T:clawfree20}--\ref{T:clawfree22} are claw-free mock threshold graphs.  One can verify by inspection that all codegrees are at least 2.  We have to prove that $H$ is of one of those types. 

In order to have exactly two vertices, say $v$ and $w$, in every triangle, $H$ must have at least two vertices adjacent to both and those two cannot be adjacent.  Let $X=\{x_1,\ldots,x_r\}
$ be the set of vertices adjacent to $v$ and $w$; then $r\geq2$.  Let $Y=\{y_1,\ldots,y_s\}$ (with $s\geq0$) be the set of vertices that are non-adjacent to both $v$ and $w$; then every $y_j$ must be adjacent to every $x_i$ for $\overline H$ to be claw-free and consequently no two $y_j$'s can be adjacent, so we have an induced subgraph $K_{r,s}$.  

Any remaining vertex of $H$ must be adjacent to $v$ or $w$ but not both.  Let $A=N(v)-X$ and $B=N(w)-X$; then $V(H)=\{v,w\}\cup X \cup Y \cup A \cup B$.  
The edges involving vertices $a\in A$ or $b\in B$ are limited, as we show in the subcases.  

An edge $bx_i$ would form a triangle without $w$; thus it cannot exist, nor can $ax_i$.  

By Proposition \ref{BIPARTITE}, $r=2$ or $s\leq2$.  We treat cases according to the value of $s$.  We first prove each case falls under Lemma \ref{L:clawfree5}, hence is mock threshold and claw-free; then we verify when the complement is a 2-core, i.e., all codegrees $\bard$ are at least 2.

When $s=0$, an edge $ab$ is possible, but suppose there are two such edges that are not adjacent, $ab$ and $a'b'$.  Then the induced subgraph $H{:}\{v,w,x_1,x_2,a,a',b,b'\}$ has no removable vertex; thus it is not mock threshold.  This applies whether or not $H$ has edges $ab'$ and $a'b$.  It follows that the edges between $A$ and $B$, if any, constitute a star, which we may assume is part of an induced subgraph $K_{2,p}$ with vertex sets $\{z,w\}$ and $P$.  

In order for $\overline H$ to be a 2-core, $H$ must have no vertex with codegree 0 or 1.  To get $\bard(v) \geq 2$, $w$ must have at least 2 neighbors in $B$.  
If $p=0$ these are leaves.  Thus $\beta$ (and $\alpha$) are at least 2.   Then every $\bard(x_i)\geq5$ so the degree condition for being a 2-core complement is satisfied.  This gives us the graph of Type \ref{T:clawfree20}.  
If $p=|P|>0$, then $v$ has a neighbor $z$ that is not a leaf so the number $\alpha$ of pendant edges at $v$ only needs to be positive to make $\bard(w)\geq2$.  Since $v$ has all of $P$ as non-neighbors, only when $p=1$ is it necessary for $w$ to have a pendant edge; that is, for $\beta$ to be positive.  Thus we have Type \ref{T:clawfree20p}.

When $s>0$, no edge $ab$ is possible, since if there were such an edge, the induced subgraph $H{:}\{v,w,a,b,x_1,x_2,y_j\}$ would have no removable vertex.

In case $s=1$ there cannot simultaneously be edges $ay_1$ and $by_1$ since that would make $H{:}\{v,w,a,b,x_1,x_2,y_1\}$ have no removable vertex.  If there is no edge of either type, we need at least one pendant edge at each of $v$ and $w$ to make $\bard(w)\geq2$ and $\bard(v)\geq2$, respectively, in order to guarantee that $\overline H$ is a 2-core; that gives Type \ref{T:clawfree21}.  If there is an edge $by_1$, there may be several of them; the $q$ vertices of $B$ in such edges along with $w$ and $y$ induce a $K_{2,q}$ with one vertex class $\{w,y_1\}$.  Then we need at least one pendant edge at $v$ to make $\bard(w)\geq2$.  This gives Type \ref{T:clawfree21q}.

If $s\geq2$, suppose there were an edge $by_1$; then $H{:}\{v,w,x_1,x_2,b,y_1,y_2\}$ has no removable vertex.  Therefore $by_j$ and similarly $ay_j$ cannot exist.  The codegrees are $\bard(y_j)\geq s+1>2$, $\bard(x_i)=r-1+\alpha+\beta$, and $\bard(v),\bard(w)\geq s\geq2$.  To ensure that $\overline H$ is a 2-core, $r-1+\alpha+\beta$ must be at least 2, which implies $\alpha+\beta \geq 1$ if $r=2$ while if $r>2$ there is no restriction on $\alpha$ and $\beta$.  Thus we have Type \ref{T:clawfree22}.
\end{proof}

%%%%%%%%%%%%%%%%%%%%%%%%
%%%%%%%%%%%%%%%%%%%%%%%%
\section {Line graphs}
 
The line graph of $G$, denoted by $L(G)$, has the edges of $G$ as its vertices, with two vertices adjacent if they are adjacent as edges in $G$. Line graphs are important in graph theory for various reasons, one of the most important being that they translate questions about edges into questions about vertices and vice versa. The class of line graphs is closed under taking induced subgraphs and there is a beautiful characterization of this class in terms of its forbidden induced subgraphs \cite{LWB}. 
 
 In this section we characterize graphs whose line graphs are mock threshold.  First, we review the facts about threshold line graphs (for which we did not find a reference).
 
 \begin{proposition}\label{P:lgthresh}
A graph is a threshold line graph if and only if it is complete or has a vertex of degree $1$ or $2$ whose deletion results in a complete graph.
\end{proposition}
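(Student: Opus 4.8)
The plan is to prove both directions by combining the classical characterization of line graphs that are complete or near-complete with a direct analysis of MT-orderings, using the principle (Lemma \ref{REMOVABLE}) that removable vertices can be stripped off one at a time. Recall first that $L(G)$ is complete if and only if $G$ is a star $K_{1,n}$ or a triangle $K_3$; more generally the relevant small graphs $G$ whose line graphs are complete or complete-minus-a-few-edges are well understood, and I would isolate exactly the $G$ for which $L(G)$ has a vertex of degree $1$ or $2$ whose deletion yields a complete graph.

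For the ``if'' direction, suppose $L(G)$ is complete or becomes complete after deleting a vertex of degree $1$ or $2$. If $L(G)=K_m$ we are done since complete graphs are threshold and line graphs (they are $L(K_{1,m})$ or $L(K_3)$). Otherwise let $x$ be the vertex of $L(G)$ of degree $1$ or $2$ with $L(G)-x=K_{m-1}$; then $x$ is removable in $L(G)$, so by Lemma \ref{REMOVABLE} (or rather its threshold analogue — equivalently by putting $x$ last in a threshold ordering of $K_{m-1}$) $L(G)$ is threshold, and $L(G)$ is a line graph by hypothesis, so $L(G)$ is a threshold line graph.

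For the ``only if'' direction, suppose $L(G)$ is a threshold line graph. It suffices to show $L(G)$ is complete or has a vertex of degree $1$ or $2$ whose deletion is complete. Using Theorem \ref{ThresholdFISC}, $L(G)$ contains no induced $2K_2$, $P_4$, or $C_4$. Since $L(G)$ is a connected threshold graph (if $G$ is connected; the general case reduces to this because $L(G)$ is a disjoint union of the line graphs of the components, and more than one nontrivial component would create an induced $2K_2$), it has a vertex ordering $e_1,\dots,e_m$ in which each $e_i$ is isolated or dominating in the first $i$ vertices; since $L(G)$ has no isolated vertex and is connected, all but possibly the first vertex are dominating, so $L(G)=K_m$ unless the very first nonisolated-at-its-stage behavior forces exactly one vertex $e_1$ to be non-dominating. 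Tracking this more carefully: let $e$ be the last vertex in the threshold ordering that is not dominating at its stage; then $L(G)-\{$everything below and including $e$ except its neighbors$\}$\dots — cleaner is to argue directly that a connected threshold graph is $K_p$ with a threshold graph ``hung'' on it, and then use the constraint that it is a line graph to bound the degree of the exceptional vertex by $2$: an edge $e$ of $G$ adjacent (in $L(G)$) to only one or two others corresponds to an edge of $G$ meeting the rest of $E(G)$ in a very restricted way, and Whitney/Beineke-type analysis of which $G$ have $L(G)$ a threshold graph shows the only possibilities are $K_{1,n}$, $K_3$, $K_{1,n}$ plus one extra edge, and the ``book'' or ``paw''-type graphs, all of which yield $L(G)$ of the stated form.

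The main obstacle will be the last step of the ``only if'' direction: pinning down exactly why the exceptional non-dominating vertex of a connected threshold line graph must have degree at most $2$ (rather than some larger bounded value), which requires either invoking the forbidden-subgraph characterization of line graphs to rule out a threshold graph that is ``$K_p$ with a larger independent set attached'' (such a graph contains an induced $K_{1,3}$ or $K_{2,3}$-line-graph obstruction), or equivalently doing the explicit case analysis of which graphs $G$ have $L(G)$ free of induced $2K_2$, $P_4$, and $C_4$. I expect the cleanest route is: $L(G)$ threshold and connected $\Rightarrow$ $L(G)$ has an independent set of size at most $2$ off a dominating clique (since an independent set of size $3$ in a threshold graph together with the clique produces an induced $P_4$ or worse), and then translate ``$L(G)=K_p$ plus one or two extra non-adjacent-to-each-other vertices each adjacent to part of the clique'' back to $G$ via the standard fact that the only graphs with $L(G)$ complete are $K_{1,n}$ and $K_3$, checking the few ways to augment them.
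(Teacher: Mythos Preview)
Your proposal misses the paper's one-line route and contains a concrete error.

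The paper proves Proposition~\ref{P:lgthresh} directly from Proposition~\ref{P:clawfreethresh}: every line graph is claw-free, so a threshold line graph is in particular a claw-free threshold graph, and those have already been completely described as (isolated vertices together with) a single component that becomes complete after deleting one vertex. The only remaining point is that for such a graph to actually be a line graph, the exceptional vertex cannot have degree $\geq 3$ in the clique (otherwise that vertex together with three of its clique-neighbors and one non-neighbor induces $K_5-e$, which is not a line graph). You never invoke Proposition~\ref{P:clawfreethresh}, and this is the shortcut that eliminates the entire ``main obstacle'' you describe at the end.

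Your ``if'' direction contains a genuine error. You write that a vertex $x$ of degree $1$ or $2$ is removable and can be placed last in a threshold ordering of $K_{m-1}$. But the threshold condition requires the last vertex to have degree $0$ or $m-1$; a vertex of degree $1$ or $2$ is removable only in the \emph{mock} threshold sense, and there is no ``threshold analogue'' of Lemma~\ref{REMOVABLE} that applies here. The graphs $K_{m-1}$ plus a vertex of degree $1$ or $2$ \emph{are} threshold, but one has to exhibit a different ordering (e.g., build the part of the clique not adjacent to $x$, then add $x$ as isolated, then add the one or two clique vertices adjacent to $x$ as dominating). You also tacitly assume the graph is already a line graph by writing it as $L(G)$; for the ``if'' direction this must be checked, not assumed.

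Your ``only if'' direction is, as you acknowledge, only a sketch: the passages ``cleaner is to argue directly\dots'', ``Whitney/Beineke-type analysis\dots shows the only possibilities are\dots'', and ``I expect the cleanest route is\dots'' are all promissory. Each of the routes you float (analyzing the threshold ordering directly, bounding the independent set off the clique, translating back through root graphs) can be made to work with enough case analysis, but none is carried out, and all are longer than simply citing claw-freeness and Proposition~\ref{P:clawfreethresh}.
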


\begin{proposition}\label{P:Glgthresh}
The line graph $L(G)$ is threshold if and only if every component of $G$ is an isolated vertex or edge, except possibly one, which is a star, optionally with one added edge.
\end{proposition}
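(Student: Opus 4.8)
The plan is to characterize, for each graph $G$, when the line graph $L(G)$ is a threshold line graph in the sense of Proposition~\ref{P:lgthresh}, and then translate that structural condition on $L(G)$ back into a condition on $G$ itself. Since Theorem~\ref{ThresholdFISC} says threshold means $\{2K_2, P_4, C_4\}$-free, the quickest route is probably to work with the forbidden induced subgraphs of $L(G)$: we need $L(G)$ to contain no induced $2K_2$, $P_4$, or $C_4$. Each of these corresponds to a small subgraph configuration in $G$. An induced $2K_2$ in $L(G)$ means two edges $e_1,e_2$ that are disjoint and non-adjacent to each other, together with two more edges $f_1,f_2$ forming a matching with $e_1,e_2$ — more precisely, $L(G)$ has $2K_2$ iff $G$ has a matching of size $2$, say $\{a,b\},\{c,d\}$, plus a second such pair that is ``far'' from the first; this forces $G$ to have essentially two components each with an edge, or one component with enough spread. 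An induced $P_4$ in $L(G)$ corresponds to a path $P_5$ in $G$ (on five vertices) or a triangle-plus-pendant-edge configuration. An induced $C_4$ in $L(G)$ corresponds to a $C_4$ in $G$ or a $K_{1,3}$ (claw) in $G$: the three edges of a star $K_{1,3}$ in $G$ are mutually adjacent in $L(G)$, giving a triangle, not a $C_4$; actually an induced $C_4$ in $L(G)$ comes from a $4$-cycle in $G$ or from $K_{1,3}$ with... — I would look up that $L(G)$ has no induced $C_4$ with a specific witness list. The cleaner approach: use the known fact that $L(G)$ is $\{2K_2,P_4,C_4\}$-free and just directly analyze the component structure forced on $G$.

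So concretely, first I would observe that if $G$ has two components each containing an edge, then $L(G) \supseteq 2K_2$ unless each such component is a single edge — wait, two single-edge components give exactly $2K_2$ in $L(G)$, which is already forbidden. Hence at most one component of $G$ contains an edge? No: isolated vertices of $G$ contribute nothing to $L(G)$, so the claim ``every component is an isolated vertex or edge, except possibly one'' must allow the ``one'' exceptional component to be a star (or star-plus-edge), and multiple edge-components are allowed as long as... Let me reconsider: if $G$ has components $K_2$ and $K_2$, then $L(G) = 2K_1$, which IS threshold. So multiple edge-components are fine. The real constraint comes from combining a non-trivial component with anything. I would show: (i) $L(G)$ is threshold iff the disjoint union structure doesn't create $2K_2$, $P_4$, $C_4$; (ii) two components each having at least two edges forces $2K_2$; (iii) one component having $\geq 2$ edges plus another component having $\geq 1$ edge forces $2K_2$ unless... hmm, $K_{1,2}$ (path on 3 vertices) plus $K_2$: $L$ is $K_2 \sqcup K_1$, which is threshold. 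So that's allowed too. The precise statement in the proposition — ``every component is an isolated vertex or edge, except possibly one, which is a star, optionally with one added edge'' — tells me the one exceptional component can be larger, and I must verify (a) such a $G$ does give a threshold $L(G)$, and (b) anything outside this description fails.

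For direction (a): if all components are isolated vertices or $K_2$'s plus one star $K_{1,m}$ optionally with an added edge (making a ``paw''-like or triangle-with-pendants shape — actually a star plus one edge between two leaves is a triangle with pendant edges, or a star plus an edge from a leaf to a new vertex is a ``broom''), I would compute $L$ of each piece. $L(K_{1,m}) = K_m$; $L$ of a $K_2$-component is $K_1$; the line graph of the whole is then $K_m$ together with some isolated vertices, plus whatever the extra edge contributes. I'd check case by case that we never get $2K_2$, $P_4$, or $C_4$ — the key point being that all edges of the star component are pairwise adjacent in $L$, so they form a clique, and the added edge attaches to this clique in a controlled way. For direction (b), the contrapositive: if $G$ is not of this form, then either (1) two components both have $\geq 2$ edges — then pick two adjacent edges in each, getting induced $P_3 \sqcup P_3 \supseteq 2K_2$ in $L(G)$; or (2) one component has $\geq 2$ edges and another has $\geq 1$ edge and the first is ``complex enough'' — I need to be careful here since $P_3 \sqcup K_2$ is allowed; the correct statement must be that at most one component is not an isolated vertex or edge, and that one is a star-plus-maybe-an-edge. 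So (2) becomes: if a component is not a star-plus-optional-edge, it contains an induced $P_4$ or $C_4$ or $K_{1,3}$-with-extra, and I'd show each of these yields a forbidden induced subgraph in $L(G)$ ($P_5$ in $G \Rightarrow P_4$ in $L(G)$; $C_4$ in $G \Rightarrow C_4$ in $L(G)$; two independent edges within one component at distance $\geq 2 \Rightarrow 2K_2$); and additionally, if two components each contain at least two edges (each contains a $P_3$), we get $2K_2$. The main obstacle will be getting the case analysis in direction (b) exhaustive and clean — in particular pinning down exactly which single-component shapes beyond the bare star are permitted (the ``optionally one added edge'' clause), since a star with an added edge can mean either a triangle with pendant edges or a star with a pendant path of length $2$, and I must confirm which of these keep $L(G)$ threshold and rule out the rest (e.g. a star with two added edges, or a $P_5$, or a $C_4$). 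I expect this component-shape enumeration, together with verifying the interaction between the exceptional component and the trivial components, to be where all the real work lies; everything else is a short translation between edge-adjacency in $G$ and vertex-adjacency in $L(G)$.
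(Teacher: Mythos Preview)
Your approach is workable but takes a genuinely different route from the paper's. The paper does not go back to Theorem~\ref{ThresholdFISC} at all; instead it uses the immediately preceding Proposition~\ref{P:lgthresh}, which already tells you exactly what a threshold line graph looks like as an abstract graph (complete, or complete with one extra vertex of degree $1$ or $2$). From there the paper simply invokes Whitney's theorem that a connected line graph determines its root graph uniquely up to the single exception $L(K_3)=L(K_{1,3})$. So one just asks: which $G$ have $L(G)\cong K_m$, or $K_m$ with one pendant vertex, or $K_m$ with one vertex of degree $2$ attached? The answers are immediate (a star, a star with a pendant path of length $2$, a star with an edge between two leaves), and isolated vertices and $K_2$-components of $G$ contribute only isolated vertices to $L(G)$, which threshold graphs absorb freely. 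That is the entire argument.

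Your forbidden-induced-subgraph translation would also succeed, but it is considerably more laborious, and your proposal shows the cost: you spend most of your effort untangling what $2K_2$, $P_4$, $C_4$ in $L(G)$ correspond to in $G$, with several false starts (e.g.\ your description of induced $2K_2$ in $L(G)$ is garbled---it is simply two vertex-disjoint copies of $P_3$ in $G$ with no edge between them; and you never settle what induced $C_4$ in $L(G)$ corresponds to). You also leave the meaning of ``star with one added edge'' unresolved. None of this is fatal, but all of it is avoided by the paper's route: Proposition~\ref{P:lgthresh} has already done the structural work on the line-graph side, and Whitney's theorem makes the passage back to $G$ a one-line lookup rather than a case analysis.
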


Another way to describe the non-isolated component (if any) of $G$ is as a connected subgraph of a triangle with any number of pendant edges at one vertex.

\begin{proof}[Proof Sketch]
Proposition \ref{P:lgthresh} follows immediately from the characterization of claw-free threshold graphs in Proposition \ref{P:clawfreethresh}.  Then it is easy to deduce Proposition \ref{P:Glgthresh} by using Whitney's theorem that the root graph of a connected line graph is unique with the exception that $L(K_3)=L(K_{1,3})$.
\end{proof}

 We start the treatment of mock threshold line graphs with a well-known fact.

 \begin{lemma}\label{LINEGRAPHLEMMA}
  If $H$ is a subgraph of $G$, then $L(H)$ is an induced subgraph of $L(G)$. 
 \end{lemma}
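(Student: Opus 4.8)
The statement to prove is Lemma~\ref{LINEGRAPHLEMMA}: if $H$ is a subgraph of $G$, then $L(H)$ is an induced subgraph of $L(G)$.

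\medskip

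\textbf{Proof proposal.} The plan is to exhibit the natural injection on vertex sets and verify it both preserves and reflects adjacency. Since $H$ is a subgraph of $G$, we have $E(H) \subseteq E(G)$, so there is an obvious inclusion $\iota\colon V(L(H)) = E(H) \hookrightarrow E(G) = V(L(G))$. First I would observe that $\iota$ is injective, so its image is a set $S$ of $|E(H)|$ vertices of $L(G)$. It then remains to check that $L(G){:}S$, the subgraph of $L(G)$ induced on $S$, equals $L(H)$.

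The key observation is that adjacency in a line graph is an intrinsic property of a pair of edges: two edges $e, f$ are adjacent in $L(K)$ (for any graph $K$ containing both) precisely when $e \neq f$ and $e \cap f \neq \emptyset$, i.e., they share an endpoint. This condition does not depend on which ambient graph $K$ we consider, only on $e$ and $f$ themselves. Consequently, for $e, f \in E(H)$, we have $e$ adjacent to $f$ in $L(H)$ $\iff$ $e$ and $f$ share an endpoint $\iff$ $e$ adjacent to $f$ in $L(G)$. The forward direction (edges adjacent in $H$ remain adjacent in $L(G)$) shows $L(H)$ is a subgraph of $L(G){:}S$; the reverse direction (two edges of $H$ that are adjacent as vertices of $L(G)$ were already adjacent in $L(H)$) shows $L(G){:}S$ has no extra edges, hence $L(G){:}S = L(H)$. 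Here it is worth noting explicitly that $H$ being merely a subgraph, not an induced subgraph, of $G$ causes no trouble: the definition of the line graph refers only to the edge set, so whether $H$ carries all edges of $G$ between its vertices is irrelevant.

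I do not anticipate a genuine obstacle here; this is a routine unwinding of definitions, and the only thing to be careful about is to state clearly that ``adjacent as edges'' means ``distinct and sharing an endpoint,'' so that the equivalence through the common combinatorial condition is transparent. If one wanted to be maximally terse, the whole argument is: the map $e \mapsto e$ from $E(H)$ to $E(G)$ is injective, and for distinct $e,f \in E(H)$, $e$ and $f$ meet in $H$ iff they meet in $G$; hence it is an isomorphism of $L(H)$ onto the induced subgraph of $L(G)$ on $E(H)$.
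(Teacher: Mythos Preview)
Your proof is correct; it is exactly the standard verification that the inclusion $E(H)\hookrightarrow E(G)$ preserves and reflects the relation ``share an endpoint.'' The paper itself does not prove this lemma at all---it is introduced as ``a well-known fact'' and left without argument---so your write-up simply supplies the routine details the authors omitted.
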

 
 As a consequence of this, the class of all graphs whose line graphs are mock threshold is closed under taking subgraphs and hence can be characterized by a list of forbidden subgraphs. This class also has a nice structural characterization. These two characterizations are combined in the following theorem.

 \begin{theorem}[Forbidden Subgraphs for Mock Threshold Line Graphs]\label{LGrootgraphs}
  Let $G$ be a graph. The following statements are equivalent.
  \begin{enumerate}[{\rm(1)}]
   \item $L(G)$ is mock threshold.
   \item $G$ contains no cycle of length at least $5$ and none of the twelve graphs shown in Figure \ref{Forbidden12Graphs}.
  
   \item $G$ is one of the following:
   \begin{enumerate}[{\rm(a)}]
   \item A linear forest (every component is a path).
   \item Only one component is not a path and it is obtained by iteratively adding a pendant edge to a leaf in one of the following:
   \begin{enumerate}[Type 1.]
   \item A star.
   \item A star plus two pendant edges on a leaf.
   \item $C_3$ with pendant edges on one vertex and at most one more pendant edge on one of the other two vertices.
   \item $C_4$ with pendant edges on one vertex and at most one pendant edge on one adjacent vertex.
   \item $K_4 -e $ with pendant edges on a trivalent vertex and at most one more pendant edge on one divalent vertex.
   \item $K_4 -e $ with two pendant edges on a divalent vertex.
   \item $K_4$ with at most two pendant edges on one vertex.
   \item Two triangles at a vertex with pendant edges on the vertex.
   \end{enumerate}
   
  \end{enumerate}
  
  \end{enumerate}
\end{theorem}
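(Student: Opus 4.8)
The plan is to establish the three implications $(3)\Rightarrow(1)$, $(1)\Rightarrow(2)$, and $(2)\Rightarrow(3)$, which together close the cycle. The tools used throughout are Lemma \ref{LINEGRAPHLEMMA} ($H\subseteq G$ implies $L(H)$ is an induced subgraph of $L(G)$) and two reduction observations I would record at the outset. First, if $v$ is a leaf of $G$ lying on the edge $f$ and $G^{+}$ is obtained by attaching a new pendant edge $e$ at $v$, then in $L(G^{+})$ the vertex $e$ is adjacent only to $f$; thus $L(G^{+})$ is $L(G)$ with one leaf added, so by Lemma \ref{REMOVABLE} $L(G^{+})$ is mock threshold if and only if $L(G)$ is. Second, $L$ of a disjoint union is the disjoint union of the line graphs, and adjoining a path component to $G$ adjoins only a path component to $L(G)$, which preserves the mock threshold property. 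Together these reductions collapse the infinite families appearing in (2) and (3) to finitely many ``base'' graphs.

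For $(3)\Rightarrow(1)$: if every component of $G$ is a path, then $L(G)$ is a linear forest, hence mock threshold (Proposition \ref{FORESTSAREMOCKTHRESHOLD}). Otherwise, by the reductions above it suffices to produce an MT-ordering of $L(G_0)$ for each of the eight base graphs of (3)(b): $L(K_{1,m})=K_m$, $L(C_3)=C_3$, $L(C_4)=C_4$, $L(K_4)=K_{2,2,2}$, and the remaining cases are small graphs for which an explicit MT-ordering is written down; the pendant-edge reduction then restores the hanging paths and the disjoint-union reduction the path components. For $(1)\Rightarrow(2)$ I would argue the contrapositive: if $G$ contains $C_k$ with $k\ge5$, then $L(C_k)=C_k$ is an induced subgraph of $L(G)$ and is not mock threshold (Proposition \ref{HOLESANDANTIHOLES}), so neither is $L(G)$ (Proposition \ref{CLOSEDUNDERINDUCEDSUBGRAPHS}); and if $G$ contains one of the twelve graphs $H$ of Figure \ref{Forbidden12Graphs}, one checks in each of the twelve cases that $L(H)$ is not mock threshold, either because it has no removable vertex, or because it contains an induced $2C_3$ or $C_5$, or because Lemma \ref{MINDEGREEMAXDEGREE} applies, so that $L(G)$ contains a non-mock-threshold induced subgraph.

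The heart of the theorem, and the step I expect to be the main obstacle, is $(2)\Rightarrow(3)$. Assume $G$ has no cycle of length $\ge5$ and no copy of any of the twelve graphs. I would first settle the component structure: two components that are not paths would contain either two vertex-disjoint chordless cycles (each of length $3$ or $4$) or a disjoint cycle and a $K_{1,3}$, and the line graph of any such configuration is a disjoint union of two triangles-or-squares, hence not mock threshold; the minimal such configurations ($2C_3$, $C_3\cup C_4$, $2C_4$, $C_3\cup K_{1,3}$, $C_4\cup K_{1,3}$, $2K_{1,3}$) are among the twelve, so at most one component $G_1$ is not a path, the others being paths. For $G_1$ I would split on whether it is a tree. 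If $G_1$ is a non-path tree, examining its vertices of degree $\ge3$ and using the pendant reduction shows it is a spider (Type 1) or a spider with one extra degree-$3$ branch vertex adjacent to the hub (Type 2), the obstructions being the double-broom trees whose two branch vertices are too far apart or both of degree $\ge4$, which are again among the twelve. If $G_1$ is not a tree it has a nonempty $2$-core $G_2$, and the absence of long cycles together with the remaining forbidden graphs forces $G_2\in\{C_3,C_4,K_4-e,K_4,\text{the bowtie}\}$, after which $G_1$ is $G_2$ with pendant paths attached only at the permitted vertices and in the permitted numbers, yielding Types 3--8. The delicate part is the bookkeeping that this list of patterns is exhaustive, and in particular that the twelve graphs are precisely the minimal obstructions under the subgraph order; this is a natural place to invoke a computer verification of the kind used for Theorem \ref{FORBIDDEN}. (One could instead route $(1)\Leftrightarrow(3)$ through the claw-free classification of Theorem \ref{T:clawfree} and Whitney's uniqueness of root graphs, but pulling the complement-based Types \text{I}--\ref{T:clawfreeLAST} back through $L$ does not appear to shorten the argument.)
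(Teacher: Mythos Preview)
Your proposal is correct and follows essentially the same cycle of implications as the paper, with the same tools (Lemma \ref{LINEGRAPHLEMMA} for $(1)\Rightarrow(2)$, a pendant-reduction to finitely many bases for $(3)\Rightarrow(1)$, and a structural case analysis for $(2)\Rightarrow(3)$). Two organisational differences are worth noting. First, for $(2)\Rightarrow(3)$ the paper does not pass to the $2$-core; instead it ``reduces'' $G$ by repeatedly deleting leaves whose neighbour has degree~$2$ (so leaves attached to high-degree vertices survive) and then splits into five cases according to which of $C_3$, induced $C_4$, induced $K_4-e$, $K_4$ are present, reading off Types~1--8 directly from the forbidden subgraphs $G_1,\dots,G_{12}$. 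Second, the paper carries out this bookkeeping entirely by hand in about a page; no computer verification is invoked here (the computer search was only for Theorem~\ref{FORBIDDEN}). Your $2$-core route would also work, but you would still need the by-hand step that the pendant pieces hanging off the $2$-core are paths (not arbitrary trees), which in the paper's organisation is absorbed into the reduction and the use of $G_1$ and $G_{11}$.
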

 
\begin{figure}
\includegraphics[scale=.3]
%[width=100mm, height=60mm] 	%This works well but scale=.65, about the same size, has slightly better aspect ratio.
{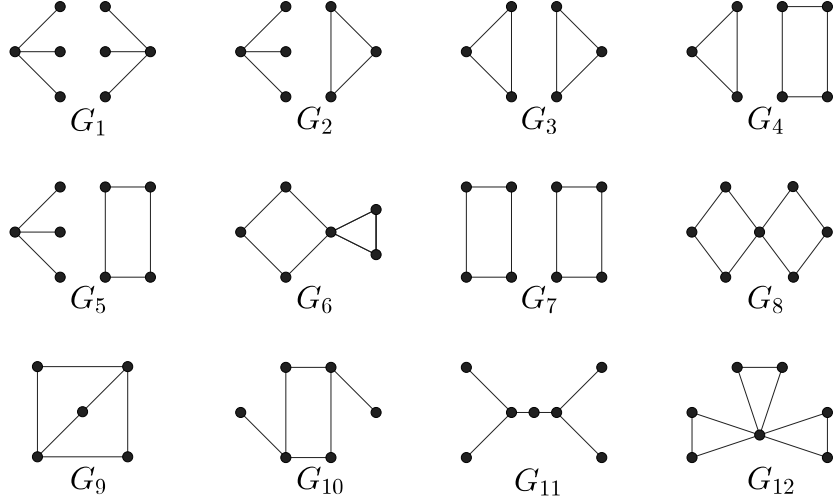}
\caption{Cycles of length at least $5$ and these 12 graphs are the minimal forbidden subgraphs for the class of graphs whose line graphs are mock threshold.}
\label{Forbidden12Graphs}
\end{figure}

\begin{proof} We will prove (1) $\Rightarrow$ (2) $\Rightarrow$ (3) $\Rightarrow (1)$. 
\bigskip
   
(1) $\Rightarrow$ (2).  The line graph of each of the graphs in Figure \ref{Forbidden12Graphs} is non-mock threshold. One way to see this is to observe that every vertex in the the line graph of each one of the graphs in the list has degree and codegree greater than $1$ and then use Lemma \ref{MINDEGREEMAXDEGREE}.   This together with Lemma \ref{LINEGRAPHLEMMA}  and Proposition \ref{CLOSEDUNDERINDUCEDSUBGRAPHS} completes the proof.  
\bigskip

(2) $\Rightarrow$ (3). Assume $G$ is a graph that contains none of the graphs in the list as a subgraph. We will show that $L(G)$ is mock threshold. If $G$ has two non-path components, then it would contain one of the forbidden subgraphs: $C_n (n \geq 5), G_1, G_2 , G_3, G_4, G_5, G_7$. Hence $G$ contains at most one component that is not a path. If $G$ has no non-path component, then we are done. Hence we will focus on the case where $G$ has exactly one non-path component. Also, we will reduce $G$, by which we mean repeatedly deleting leaves adjacent to vertices of degree $2$. 

Note that $G$ contains only cycles of length $3$ or $4$. 

\emph{Case 1. $G$ has no cycles.} In this case $G$ is a tree. Since $G$ contains neither $G_1$ nor $G_{11}$, any two vertices of degree at least three must be adjacent. Since $G$ has no triangles this implies $G$ has at most two vertices of degree at least $3$.  If $G$ has only one vertex of degree greater than $2$, then $G$ is a star (Type 1). If $G$ has two vertices that have degree greater than $2$, then one of them has to have degree $3$ (since $G$ does not contain $G_1$). This gives Type 2.   

\emph{Case 2. $G$ has a $3$-cycle but no $4$-cycle.} Let $X = \{v_a,v_b,v_c\}$ form a triangle in $G$. Since $G$ has no bigger cycles, every vertex in $V(G)-X$ has at most one neighbor in $X$. Since $G$ is reduced, every vertex in $V(G)-X$ is adjacent to at least one vertex in $X$. Observe that $G-X$ can have at most one edge (otherwise it would contain a cycle of length greater than $3$). If $G-X$ has exactly one edge, then $G$ is of Type 8. Otherwise, $G$ is the triangle on $X$ together with pendant edges (forming a star) at each of $a,b,c$. If two of these stars have size more than one, then $G$ would contain $G_{11}$. Hence $G$ is of Type 3.    

\emph{Case 3. $G$ has an induced $4$-cycle.} Let $X = \{v_a,v_b,v_c,v_d\}$ induce a $4$-cycle in $G$. Since $G$ contains neither $C_5$ nor $G_9$, every vertex in $V(G)-X$ has at most one neighbor in $X$. Since $G$ is reduced, every vertex in $V(G)-X$ is adjacent to at least one vertex in $X$. Hence every vertex in $V(G)-X$ is adjacent to exactly one vertex in $X$. Since $G$ contains neither $G_6$ nor cycles of length greater at least $5$, $G-X$ is edgeless. 

Hence $G$ is the $4$-cycle on $X$ together with pendant edges (forming a star) at each of its vertices. Since $G_{10}$ is not a subgraph of $G$, the stars can be at two adjacent vertices only. If both stars have size more than one, then $G$ would contain $G_1$. Hence $G$ is of Type 4.

\emph{Case 4. $G$ has an induced $K_4 -e $.} Let $X = \{v_a,v_b,v_c,v_d\}$ induce a $K_4-e$ in $G$ where $v_bv_d$ is the missing edge. Arguing exactly as in the previous case, we see that $G-X$ is edgeless.  Hence $G$ is the $K_4-e$ on $X$ together with pendant edges (forming a star) at each of its vertices. Since $G$ does not contain $G_2$, the stars at  $v_b$ and $v_d$ can have at most two edges. If one of them has two edges, then there can be no other edges since $G$ contains neither $G_{10}$ nor $G_{11}$. Hence $G$ is  of Type 6. Otherwise $G$ is of type 5.

\emph{Case 5. $G$ contains $K_4$.}  If two of these vertices have degree more than $3$, then $G$ would contain $C_5$ or $G_{10}$. Hence only one of these vertices (say $v_a$) has degree more than $3$. Also the neighbors of $v_a$ outside the $4$-clique must form an independent set for $G$ does not contain $G_3$. Since $G$ does not contain $G_2$, the degree of $v_a$ must be at most $5$. Hence $G$ is of Type 7. 

This concludes the proof that (2) $\Rightarrow$ (3). 
\bigskip

(3) $\Rightarrow$ (1). It is straightforward to check that the line graph of a graph belonging to any of the eight types is mock threshold. We omit the details. 
\end{proof}

\begin{figure}
\includegraphics[scale=.175]
%[width=100mm, height=40mm]
{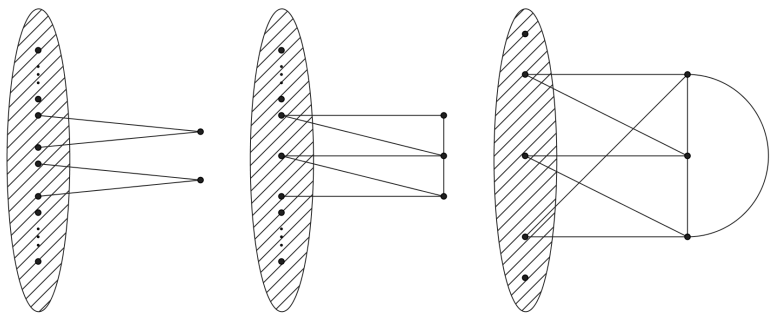}
\caption{Mock threshold line graphs. The shaded part denotes a clique. In the first two the clique can be of any size whereas in the third the clique has $5$ vertices.}
\label{mockline2}
\end{figure}

\begin{corollary}[Mock Threshold Line Graphs]
A graph is a mock threshold line graph if and only if it is a linear forest or a graph with exactly one non-path component which can be obtained from a connected induced subgraph of one of the three graphs in Figure \ref{mockline2}  by repeatedly adding pendant edges to leaves.
\end{corollary}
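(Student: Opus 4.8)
The plan is to derive the corollary directly from the equivalence $(1)\Leftrightarrow(3)$ of Theorem~\ref{LGrootgraphs} together with Whitney's theorem on line graphs, essentially translating the eight root-graph types into the structure of the line graphs themselves. First I would dispose of the forest case: if $G$ is a linear forest, then $L(G)$ is again a linear forest (each path $P_k$ has line graph $P_{k-1}$ and isolated vertices/edges contribute nothing or a vertex), so this half is immediate and accounts for the ``linear forest'' alternative in the corollary.

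Next I would handle the single non-path component. By Theorem~\ref{LGrootgraphs}(3), after reduction $G$ has one component which is a connected induced subgraph of one of Types 1--8, possibly with pendant edges added to leaves. The key observation is that adding a pendant edge to a leaf $u$ of $G$ (where the edge $e=uu'$ already exists) adds to $L(G)$ a new vertex adjacent only to the vertex $e$; i.e., it hangs a pendant vertex off $L(G)$, and iterating this hangs a path. So it suffices to compute $L$ of the ``core'' graphs of Types 1--8 and observe they are exactly the connected induced subgraphs of the three graphs in Figure~\ref{mockline2}. Concretely: $L(\text{star }S_k)=K_{k-1}$ (clique of any size); $L$ of ``$C_3$ with pendant edges at one vertex, at most one more elsewhere'' and $L$ of ``$K_4-e$ with pendant edges at a trivalent vertex'' and ``two triangles at a vertex with pendant edges'' all give a clique with one or two extra vertices attached in the pattern of the first two graphs of Figure~\ref{mockline2}; and $L(K_4)$ with up to two pendant edges on one vertex, together with the $C_4$ and $K_4-e$ variants, yields induced subgraphs of the third graph (the one with the $5$-clique). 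I would present this as a short case check over the eight types, noting that in each case $L(\text{Type }i)$ sits inside one of the three pictured graphs as a connected induced subgraph, and conversely every connected induced subgraph of those three graphs arises this way.

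For the converse direction I would argue that any graph obtained from a connected induced subgraph of one of the three graphs in Figure~\ref{mockline2} by repeatedly hanging pendant edges off leaves is the line graph of an appropriate root graph of one of Types 1--8 (using Whitney's theorem to recover the root graph, with the $L(K_3)=L(K_{1,3})$ ambiguity being harmless), and hence is mock threshold by Theorem~\ref{LGrootgraphs}. Combined with the forest case, this gives both directions.

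The main obstacle I anticipate is purely bookkeeping: verifying that the line graphs of the eight root-graph types, with their pendant-edge decorations, really do coincide with the connected induced subgraphs of exactly those three graphs in Figure~\ref{mockline2}, and that no further graphs sneak in --- in particular making sure the bounds in the types (``at most one more pendant edge'', ``at most two pendant edges'', clique of size exactly $5$ for $K_4$) translate correctly into the structure of Figure~\ref{mockline2}. Since Theorem~\ref{LGrootgraphs} already does the hard combinatorial work, this corollary reduces to a careful but routine translation, which is why one can reasonably omit the details.
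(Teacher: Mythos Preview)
Your proposal is correct and follows the same route the paper implicitly takes: the corollary is stated in the paper without proof, directly after Theorem~\ref{LGrootgraphs}, so it is meant as an immediate translation of the eight root-graph types through the line-graph operation, exactly as you outline. Your identification of the key steps (linear forest case, pendant-edge-to-leaf correspondence, and the case check over Types~1--8 using Whitney's theorem) is precisely the intended argument, and your caveat that the only real work is bookkeeping matches the paper's decision to omit details entirely.
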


%%%%%%%%%%%%%%%%%%%%%%%%
%%%%%%%%%%%%%%%%%%%%%%%%
\section{Miscellaneous topics}

We would like to characterize the mock threshold graphs with other standard graph properties.  
We can characterize planarity and outerplanarity; that will appear separately.
We have minor or partial results on other properties, that we address in this section.

%%%%%%%%%%%
\begin{subsection}
{Chordality}\

Threshold graphs are chordal, as can easily be seen from their characterization by forbidden induced subgraphs (Theorem \ref{ThresholdFISC}). But mock threshold graphs need not be chordal. We characterize those that are chordal. 

\begin{proposition}
A mock threshold graph is chordal if and only if it has an MT-ordering $v_1, \ldots, v_n$ such that for every $i$ ($1 \leq i \leq n$), if the degree of $v_i$ in $G{:}\{v_1, \ldots, v_i\}$ is $i-2$, the unique non-neighbor of $v_i$ in $\{v_1, \ldots, v_i\}$ is simplicial in $G{:}\{v_1, \ldots, v_i\}$. 
\end{proposition}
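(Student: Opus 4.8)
The plan is to isolate how chordality behaves under the four operations used to build a mock threshold graph, prove a single lemma covering the only nontrivial case, and then run a routine induction on $|V(G)|$. Adding an isolated vertex or a leaf creates no cycle through the new vertex, and adding a dominating vertex $v$ creates no \emph{induced} cycle of length $\geq 4$ through $v$ (such a cycle would need a vertex non-adjacent to $v$ among its members); so in each of these three cases the enlarged graph is chordal if and only if the old one is. The remaining case is the crux, which I would state as follows. \textbf{Key Lemma.} Let $v\notin V(H)$ and $u\in V(H)$, and let $G$ be obtained from $H$ by joining $v$ to every vertex of $H$ except $u$. Then $G$ is chordal if and only if $H$ is chordal and $u$ is simplicial in $H$.

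For the forward implication of the Key Lemma, $H=G-v$ is chordal as an induced subgraph, and if $u$ had two non-adjacent neighbors $x,y$ in $H$ then $\{v,x,u,y\}$ would induce a $4$-cycle in $G$ (edges $vx,xu,uy,yv$; non-edges $vu,xy$), contradicting chordality. Conversely, if $H$ is chordal and $u$ is simplicial, any induced cycle of $G$ of length $\geq 4$ must pass through $v$ (otherwise it lies in $H$); but the only non-neighbor of $v$ is $u$, so every vertex of the cycle other than the two neighbors of $v$ on the cycle must equal $u$, forcing the cycle to be $v\,x\,u\,y\,v$ with $x,y\in N_H(u)$ non-adjacent --- impossible since $u$ is simplicial in $H$.

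Now prove the proposition by induction on $n=|V(G)|$. For ``$\Leftarrow$'', given a good MT-ordering $v_1,\dots,v_n$, its restriction to $\{v_1,\dots,v_{n-1}\}$ is still a good MT-ordering (the MT condition and the simpliciality condition at each position $i$ depend only on $G{:}\{v_1,\dots,v_i\}$), so $G-v_n$ is chordal by induction; then apply the first-paragraph observations if $v_n$ is isolated, a leaf, or dominating, and the ``if'' half of the Key Lemma if $v_n$ is near-dominating with unique non-neighbor $u$ --- noting that $u$ is non-adjacent to $v_n$, so $u$ is simplicial in $G{:}\{v_1,\dots,v_n\}$ exactly when it is simplicial in $G-v_n$, which is what goodness of the ordering supplies. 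For ``$\Rightarrow$'', let $G$ be chordal and mock threshold and let $v$ be the last vertex of any MT-ordering of $G$; then $G-v$ is chordal (induced subgraph of a chordal graph) and mock threshold, hence has a good MT-ordering by induction, and appending $v$ yields an MT-ordering of $G$ whose only new condition, at the last position, is automatic unless $v$ is near-dominating --- in which case the ``only if'' half of the Key Lemma gives precisely that the unique non-neighbor of $v$ is simplicial in $G-v$, equivalently in $G$.

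The one place that needs care is the Key Lemma, specifically the observation that the sole induced cycle of length $\geq 4$ which a near-dominating addition can introduce is that one distinguished $4$-cycle through $v$, $u$, and two non-adjacent neighbors of $u$; everything else is bookkeeping about prefixes of the ordering and about the fact that simpliciality of $u$ is unaffected by any vertex non-adjacent to $u$.
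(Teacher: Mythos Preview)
Your argument is correct. The Key Lemma is sound, the inductive bookkeeping is clean, and you handled the one subtlety---that simpliciality of $u$ in $G{:}\{v_1,\dots,v_i\}$ coincides with simpliciality in $G{:}\{v_1,\dots,v_{i-1}\}$ because $v_i$ is not adjacent to $u$---exactly where it was needed.

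Your route differs from the paper's in organization. You isolate the near-dominating step as a standalone lemma and then run an induction on $n$. The paper argues both directions directly, without induction: for ``good ordering $\Rightarrow$ chordal'' it takes a hypothetical induced cycle $C$ of length $\geq 4$, picks the vertex $v_i\in C$ of largest index, observes that $v_i$ must then be near-dominating in $G{:}\{v_1,\dots,v_i\}$ with its unique non-neighbor on $C$, forcing $|C|=4$ and a violation of the simpliciality clause; for the converse it shows that \emph{any} MT-ordering that violates the clause at some $i$ immediately yields an induced $C_4$ on $\{v_i,v_k,v_a,v_b\}$. Thus the paper in fact proves the slightly stronger statement that every MT-ordering of a chordal mock threshold graph satisfies the simpliciality condition, whereas your inductive construction only produces one such ordering (which is all the proposition asks for). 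The trade-off: your modular Key Lemma is reusable and makes the role of each MT operation explicit; the paper's direct argument is shorter and yields the ``every ordering'' strengthening for free.
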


\begin{proof}
Let $G$ be a mock threshold graph with an MT-ordering $v_1, \ldots, v_n$ such that for every $i$ ($1 \leq i \leq n$) such that the degree of $v_i$ in $G{:}\{v_1, \ldots, v_i\}$ is $i-2$, the unique non-neighbor of $v_i$ in $G{:}\{v_1, \ldots, v_i\}$ is simplicial. 
Suppose $G$ contains an induced $k$-cycle ($k \geq 4$), say $C$. Let $i$ be the largest index of a vertex in $C$. Since $v_i$ has degree $2$ in $C$, $\{v_1, \ldots, v_n\}$ is an MT-ordering, and $C$ is induced in $G$, $k = 4$. This means the unique non-neighbor of $v_i$ in $\{v_1, \ldots, v_i\}$ is not simplicial, a contradiction. Hence $G$ does not contain an induced $k$-cycle for some $k \geq 4$, and hence is chordal. 
 
For the converse, we prove the contrapositive. Let $G$ be a mock threshold graph with an MT-ordering $v_1, \ldots, v_n$. Let $v_i$ be a vertex that is non-adjacent to exactly one vertex in $\{v_1, \ldots, v_i\}$, say $v_k$. Suppose $v_k$ is not simplicial in $G{:} \{v_1,\ldots,v_i\}$. Then $v_k$ has neighbors $v_a$ and $v_b$ that are non-adjacent. Now $\{v_i,v_j,v_a,v_b\}$ induces a $4$-cycle in $G$, and hence is not chordal. 
\end{proof}

\end{subsection}

%%%%%%%%%%%
\begin{subsection}
 {Evenness}\

A graph is even if all its vertices have even degree and Eulerian if it is even and connected. We cannot characterize even or Eulerian mock threshold graphs, but we will show that every mock threshold graph with $n$ vertices is an induced subgraph of a mock threshold even graph with at most $2n$ vertices.  But first, for comparison, we state an easy characterization of even and Eulerian threshold graphs.

\begin{proposition}\label{P:even}
A threshold graph is even if and only if, in a threshold ordering, the length of each consecutive string of dominating vertices is even.  It is Eulerian if and only if it is even and the last vertex added is dominating.
\end{proposition}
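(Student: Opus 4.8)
The plan is to work directly from the structure of a threshold graph as captured by a threshold ordering $v_1,\ldots,v_n$, in which each $v_i$ (for $i\geq 2$) is added either as an isolated vertex (degree $0$ in $G{:}\{v_1,\ldots,v_i\}$) or as a dominating vertex (degree $i-1$). For such an ordering, the degree of $v_i$ in the final graph $G$ is determined by how many \emph{later} vertices are dominating: if $v_i$ is added as dominating, then $d_G(v_i) = (i-1) + (\text{number of } j>i \text{ with } v_j \text{ dominating})$; if $v_i$ is added as isolated, then $d_G(v_i)$ equals just the number of $j>i$ with $v_j$ dominating. The key combinatorial observation is that two consecutive vertices in a maximal run of dominating vertices differ in degree by exactly $1$ (one more later-dominating vertex contributes, but they have the same ``base''), so the degrees within a dominating run alternate in parity; meanwhile an isolated vertex and the dominating vertex immediately following it have degrees differing by $i-1$ versus the count of later dominators, and one checks these have opposite parity exactly when $i-1$ is odd, i.e. $i$ even. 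The cleanest route, though, is: the last vertex $v_n$ has degree $0$ (if isolated) or $n-1$ (if dominating), and then peel off $v_n$ and argue by induction on $n$, since $G - v_n$ is threshold with the inherited ordering.

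Concretely, for the even characterization I would induct on $n$. The base case $n=1$ is trivial ($K_1$ is even). For the inductive step, consider whether $v_n$ is isolated or dominating in the ordering. If $v_n$ is isolated, then $d_G(v) = d_{G-v_n}(v)$ for all $v\neq v_n$ and $d_G(v_n)=0$, so $G$ is even iff $G-v_n$ is even; on the ordering side, appending an isolated vertex neither creates nor extends a run of dominating vertices, so the ``all runs of dominating vertices have even length'' condition is the same for $G$ and $G-v_n$, and induction applies. If $v_n$ is dominating, then $d_G(v_n)=n-1$ and $d_G(v) = d_{G-v_n}(v)+1$ for every $v\neq v_n$, so $G$ is even iff $n-1$ is even and every vertex of $G-v_n$ has odd degree, i.e. iff $n$ is odd and $\overline{G-v_n}$ is... no — more directly, iff $n$ is odd and $G-v_n$ has all degrees odd. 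I then need a parallel statement: $G-v_n$ has all degrees odd iff, in its threshold ordering, the final run of dominating vertices has odd length and every earlier run has even length — another easy induction of the same shape. Combining, $G$ is even iff the run of dominating vertices ending at $v_n$ has even length (it has length $1+(\text{length of final run of } G-v_n)$, which is even iff that final run has odd length) and all earlier runs have even length. That is exactly the stated condition.

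For the Eulerian part: a graph is Eulerian iff it is even and connected. A nonempty threshold graph $G$ with threshold ordering $v_1,\ldots,v_n$ is connected iff $v_n$ is dominating — indeed if $v_n$ is dominating it is adjacent to everything so $G$ is connected, while if $v_n$ is isolated then (for $n\geq 2$) $v_n$ is an isolated vertex of $G$, so $G$ is disconnected (and $K_1$ we can treat as the trivial connected case, or note it is even and ``the last vertex added'' is vacuously/trivially handled). More carefully one should note that connectivity of a threshold graph forces the \emph{last} vertex of any threshold ordering to be dominating, which follows since an isolated-type last vertex is genuinely isolated in $G$. So $G$ is Eulerian iff it is even and $v_n$ is dominating, which is the claim.

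I do not expect any real obstacle here; everything reduces to the parity bookkeeping inside runs of dominating vertices, and the peel-off-the-last-vertex induction makes even that bookkeeping routine. The one point requiring a little care is setting up the correct inductive \emph{pair} of statements — ``$G$ even'' paired with ``$G$ all-odd-degree'' — so that the dominating-vertex step closes properly; stating both and proving them simultaneously by induction on $n$ is the tidiest presentation. A minor subtlety is the degenerate case $n=1$ and the phrasing ``the last vertex added is dominating'' when $G=K_1$, which is harmless and can be mentioned in a sentence.
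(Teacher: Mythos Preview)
The paper does not actually supply a proof of this proposition; it is stated as an ``easy characterization'' and left to the reader. So there is nothing to compare against, and your proposal must stand on its own.

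Your inductive framework (peel off $v_n$; track how degrees shift by $0$ or $1$) is the natural one, and the Eulerian clause is handled correctly: a threshold graph is connected iff $v_n$ is dominating, so Eulerian $\Leftrightarrow$ even and $v_n$ dominating.

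The gap is in the evenness clause. Your auxiliary statement --- ``$G-v_n$ has all degrees odd iff, in its threshold ordering, the final run of dominating vertices has odd length and every earlier run has even length'' --- is false. Take the ordering $I,I,D$ (two isolated vertices, then one dominating): the unique $D$-run has odd length $1$, yet the degrees are $1,1,2$, not all odd. So the ``easy induction of the same shape'' will not close. Relatedly, in your ``Combining'' sentence you silently discard the condition ``$n$ odd'' that you correctly derived a line earlier; that condition does not follow from the run-length conditions you keep. The ordering $I,I,D,D$ (giving $K_4-e$) has a single $D$-run of even length $2$, but the degree sequence is $2,2,3,3$ --- not even. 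This example simultaneously refutes your auxiliary claim (applied to $I,I,D$) and your final combination.

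What is really going on is that the degree of a dominating vertex $v_i$ is $(i-1)+f(i)$, and its parity depends on the number of \emph{isolated} vertices preceding it, not only on $D$-run lengths. If you carry your induction through honestly you will find you need a condition on the lengths of the isolated runs as well (specifically, with the convention $v_1=I$: the first $I$-run has odd length and every $I$-run strictly between two $D$-runs has even length, in addition to all $D$-runs being even). So either the proposition as printed is using a convention that is not made explicit, or it is slightly misstated; in any case your proof sketch does not establish it, and the step you flagged as ``another easy induction'' is exactly where it breaks.
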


Let $G$ be a graph. Let $v_1, \ldots ,v_k$ be the vertices with even degree. Let $w_1, \ldots, w_p$ be the vertices with odd degree. (Thus $p$ is even.)  Let $G'$ be obtained from $G$ by adding vertices $x_1, \ldots, x_p$ and $x_i$ is adjacent to all vertices in $G'$ except $w_i$.
Since $G'$ is obtained from $G$ by adding near-dominating vertices, 

\begin{lemma}\label{MTGPreservation}
If $G$ is mock threshold, then so is $G'$.
\end{lemma}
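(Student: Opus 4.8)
The statement to prove is Lemma~\ref{MTGPreservation}: if $G$ is mock threshold, then so is $G'$, where $G'$ is obtained from $G$ by adding vertices $x_1,\ldots,x_p$ with $x_i$ adjacent to every vertex of $G'$ except $w_i$ (here $w_1,\ldots,w_p$ are the odd-degree vertices of $G$).

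The plan is to build an MT-ordering of $G'$ by extending an MT-ordering of $G$. First I would take an MT-ordering $u_1,\ldots,u_n$ of $G$, which exists by hypothesis. I would then append $x_1,\ldots,x_p$ to the end of this ordering in any order, producing a candidate ordering $u_1,\ldots,u_n,x_1,\ldots,x_p$ of $V(G')$. The first $n$ steps are unchanged, so they still satisfy the MT-degree condition ($0,1,i-2,$ or $i-1$) because the induced subgraph on $\{u_1,\ldots,u_i\}$ in $G'$ equals that in $G$. For the remaining steps, I must check that each $x_j$, when added, has degree $0$, $1$, (current index$-2$), or (current index$-1$) in the graph induced on all previously placed vertices. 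When $x_j$ is placed, the previously placed vertices are all of $V(G)$ together with $x_1,\ldots,x_{j-1}$; that is $n+j-1$ vertices, so the current index is $n+j$. The vertex $x_j$ is adjacent to every one of these except $w_j$, hence has degree exactly $(n+j-1)-1 = (n+j)-2$, which is an allowed value (it is the "near-dominating" case). So each $x_j$ is near-dominating at the moment it is added, and the ordering is a valid MT-ordering of $G'$.

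The key observation, already flagged in the paragraph preceding the lemma, is simply that each $x_j$ is a near-dominating vertex with respect to all vertices placed before it, and that adding a near-dominating (or dominating, or isolated, or leaf) vertex as the next vertex in an MT-ordering preserves the MT-ordering property. This is exactly the mechanism used in Lemma~\ref{REMOVABLE} and in Proposition~\ref{MTGPerfect}. There is essentially no obstacle here: the only thing to be careful about is that we are adding the $x_j$'s \emph{after} all of $G$, so that when $x_j$ is added it sees $n+j-1$ earlier vertices and misses exactly one of them ($w_j$), making its degree $(n+j)-2$ rather than something smaller; if instead the $x_j$'s were interleaved with vertices of $G$ we would need to recount, but as stated they go at the end and the count is immediate.

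So the proof is a one-line verification: append the new vertices in any order to an MT-ordering of $G$; each new vertex is near-dominating among its predecessors, so the extended ordering is an MT-ordering of $G'$. The hardest part is really just being explicit that the degree of $x_j$ among its $n+j-1$ predecessors is $n+j-2$, i.e.\ one short of dominating — which is among the permitted degrees $\{0,1,i-2,i-1\}$ with $i=n+j$ — and invoking Proposition~\ref{CLOSEDUNDERINDUCEDSUBGRAPHS}-style reasoning that the first $n$ steps are unaffected because the induced subgraph on those vertices is the same in $G$ and $G'$.
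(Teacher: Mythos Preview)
Your proof is correct and takes the same approach as the paper: the paper's entire argument is the sentence ``Since $G'$ is obtained from $G$ by adding near-dominating vertices,'' placed just before the lemma, and you have simply spelled out that observation in detail by appending $x_1,\ldots,x_p$ to an MT-ordering of $G$ and checking each is near-dominating when added.
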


The following fact is straightforward to verify. 

\begin{lemma}\label{Evenness}
If $G$ has an odd number of vertices, then $G'$ is even. 
\end{lemma}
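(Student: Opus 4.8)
The plan is a direct parity count of the degrees of all vertices of $G'$, organized by the three kinds of vertex: the new vertices $x_1,\dots,x_p$, the old even-degree vertices $v_1,\dots,v_k$, and the old odd-degree vertices $w_1,\dots,w_p$. Recall that $p$, the number of odd-degree vertices of $G$, is even (handshake lemma, as noted in the text), and write $n=|V(G)|=k+p$; we are assuming $n$ is odd, so $k$ is odd as well. Throughout, by construction $x_i$ is adjacent to all vertices of $G$ except $w_i$ and to none of the other $x_j$, and the adjacencies among the old vertices are unchanged, so the only thing to track is how many new neighbors each old vertex picks up.

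First I would treat the new vertices: $x_i$ has degree $n-1$ in $G'$, which is even because $n$ is odd. Next, each old even-degree vertex $v_\ell$ differs from every $w_i$, hence in $G'$ it acquires exactly the $p$ new neighbors $x_1,\dots,x_p$; thus $d_{G'}(v_\ell)=d_G(v_\ell)+p$, even as a sum of two even numbers. Finally, an old odd-degree vertex $w_\ell$ is adjacent in $G'$ to every $x_i$ with $i\neq\ell$ but not to $x_\ell$, so it acquires exactly $p-1$ new neighbors; thus $d_{G'}(w_\ell)=d_G(w_\ell)+(p-1)$, which is even because $d_G(w_\ell)$ is odd and $p-1$ is odd. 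Since every vertex of $G'$ then has even degree, $G'$ is even. (If $p=0$ the graph $G$ was already even and $G'=G$, so there is nothing to prove.)

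There is essentially no obstacle here; the lemma is exactly the routine bookkeeping the text promises. The only points needing a little care are to use the parity of $p$, not of $n$, when counting how many new neighbors an old vertex gains, and to observe that it is precisely the hypothesis ``$n$ odd'' that forces the new vertices $x_i$ to have even degree $n-1$; without it the construction would not yield an even graph, which is why for an even number of vertices one first adjoins an isolated vertex before applying the construction.
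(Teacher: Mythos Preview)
Your parity count is correct and is precisely the ``straightforward verification'' the paper alludes to but omits; there is no alternative argument in the paper to compare against.

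One point worth making explicit: the paper's wording is ``$x_i$ is adjacent to all vertices in $G'$ except $w_i$,'' which read literally would make the $x_i$'s pairwise adjacent. Under that reading each $x_i$ has degree $n+p-2$, and since $n$ is odd and $p$ is even this is \emph{odd}, so the lemma would actually fail (e.g.\ already for $G=P_3$). Your interpretation---that $x_i$ is joined only to $V(G)\setminus\{w_i\}$ and to no other $x_j$---is the one under which the lemma holds, so it is presumably the intended construction; you did well to state it explicitly rather than leave it implicit. A tiny slip in your closing commentary: for even $|V(G)|$ the paper adjoins a \emph{dominating} vertex, not an isolated one, before applying the construction (though either device makes the order odd, so your mathematical point stands).
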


We will use these two lemmas to prove an evenness property.

\begin{proposition} 
Every mock threshold graph $G$ with $n$ vertices is an induced subgraph of a mock threshold even graph with at most $2n$ vertices.
\end{proposition}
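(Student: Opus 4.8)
The plan is to combine Lemmas \ref{MTGPreservation} and \ref{Evenness} after a simple case analysis on the parity of $n$. The construction $G \mapsto G'$ adds one new vertex for each odd-degree vertex of $G$; if $G$ has $p$ odd-degree vertices then $G'$ has $n+p$ vertices, and $p \leq n$, so $G'$ already has at most $2n$ vertices. The issue is that Lemma \ref{Evenness} only guarantees $G'$ is even when $G$ has an odd number of vertices, so I would first reduce to that situation.

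First, suppose $n$ is odd. Then by Lemma \ref{MTGPreservation}, $G'$ is mock threshold, and by Lemma \ref{Evenness}, $G'$ is even. Since $G'$ is built from $G$ by adding near-dominating (hence removable) vertices one at a time, $G$ is an induced subgraph of $G'$, and $|V(G')| = n + p \leq n + n = 2n$. So $G'$ is the desired graph.

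Second, suppose $n$ is even. Here I would pass to an auxiliary graph $G_0$ obtained from $G$ by adding a single isolated vertex; then $G_0$ is mock threshold (adding an isolated vertex preserves the MT-property), has $n+1$ vertices (odd), and contains $G$ as an induced subgraph. Applying the odd case to $G_0$, we get a mock threshold even graph $H = (G_0)'$ with $G \subseteq G_0 \subseteq H$ as induced subgraphs. I must check the vertex count: $G_0$ has the same set of odd-degree vertices as $G$ (the new vertex is isolated, hence even), so $p$ odd-degree vertices, and $|V(H)| = (n+1) + p$. Since $p \leq n$ this gives $|V(H)| \leq 2n+1$, which is one too many. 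To fix this I would instead note that when $n$ is even, $p$ is even and in fact $p \leq n$; the only way $p = n$ is if every vertex of $G$ has odd degree. I would handle that boundary case directly (for instance, if $n$ is even and every vertex has odd degree, $G'$ has $2n$ vertices but need not be even; however one can pair up the added vertices $x_i$ cleverly or instead add the isolated vertex and observe $|V(H)| = (n+1)+p = 2n+1$ is acceptable only if we allow it—so the honest route is to argue $p \leq n-1$ whenever $n$ is even by a parity/handshake argument, or simply absorb the extra vertex). The cleanest resolution is: if $n$ is even, then since $\sum \deg(v)$ is even and there are an even number of odd-degree vertices, we still only know $p \le n$; but if $p \le n - 2$ the count $(n+1)+p \le 2n-1$ works, and if $p \in \{n-1, n\}$ — impossible for $p$ even and $n$ even unless $p = n$ — we treat $p = n$ separately. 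When $p=n$, every vertex of $G$ has odd degree, so $G$ has no isolated vertices and $\delta(G)\ge 1$; then I would instead delete a suitable leaf or argue more carefully, or simply observe that in this case one can add only $n$ near-dominating vertices to $G$ itself but arrange the last one so the resulting graph on $2n$ vertices is even by choosing the order of additions.

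The main obstacle is precisely this vertex-count bookkeeping in the even case: the naive "add an isolated vertex, then apply $G \mapsto G'$" overshoots $2n$ by one when many vertices of $G$ already have odd degree. I expect the fix to be a short parity observation — realizing that when $n$ is even one can instead add the isolated vertex and simultaneously absorb it as one of the dominating-type vertices, keeping the total at $2n$ — but spelling this out correctly is the only genuinely nontrivial point; everything else is immediate from the two lemmas.
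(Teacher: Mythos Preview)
Your odd-$n$ case is correct and matches the paper exactly. The even-$n$ case, however, has a genuine gap that you identify but do not close: adding an \emph{isolated} vertex leaves the set of odd-degree vertices unchanged, so $(G_0)'$ has $(n+1)+p$ vertices, and when $p=n$ this is $2n+1$. Your proposed patches (absorbing the extra vertex, re-ordering the $x_i$, deleting a leaf) are vague and none of them obviously works; in particular there is no parity reason forcing $p\le n-2$ when $n$ is even, since graphs with every vertex of odd degree certainly exist.

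The paper's fix is a one-word change that resolves the issue cleanly: when $n$ is even (and $G$ is not already even), add a \emph{dominating} vertex rather than an isolated one. Call the result $H$. Then $H$ has $n+1$ vertices (odd), and the new vertex has degree $n$, which is even; meanwhile every old vertex has its degree increased by $1$, so the odd-degree vertices of $H$ are exactly the formerly even-degree vertices of $G$, of which there are $n-p$. Since $G$ is not even we have $p\ge 2$, hence $H'$ has $(n+1)+(n-p)\le 2n-1$ vertices. Adding a dominating vertex preserves the mock threshold property, and $G$ is an induced subgraph of $H$ which is an induced subgraph of $H'$, so you are done. The point is that a dominating vertex flips all parities, converting the bad case ($p$ large) into the good case ($n-p$ small); an isolated vertex does nothing to the parities and so cannot help.
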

\begin{proof}
Let $G$ be a mock threshold graph on $n$ vertices. If $G$ is already even, we are done. If $G$ has an odd number of vertices, then $G'$ is mock threshold (by Lemma \ref{MTGPreservation}) and is even (by Lemma \ref{Evenness}). If $G$ has an even number of vertices, then $H'$ is mock threshold and even, where $H$ is obtained from $G$ by adding a dominating vertex. 
\end{proof}
\end{subsection}

%%%%%%%%%%%
\begin{subsection}
{A big clique or a big stable set}\
 
It is known that  every graph on $n$ vertices contains a clique or stable set of size at least $\frac{1}{2} \log_2 {n}$.
It is also known that every perfect graph on $n$ vertices contains a clique or stable set of size at least $\sqrt{n}$. 
Since mock threshold graphs are more structured, a stronger conclusion holds for them. 
 
\begin{proposition}
Every mock threshold graph on $n$ vertices contains a clique or stable set of size at least $\frac{n}{4}$. 
\end{proposition}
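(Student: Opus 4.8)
The plan is to exploit the MT-ordering directly and track how a clique and a stable set can be built up as we process the vertices $v_1,\ldots,v_n$ in order. At each step $v_i$ is added with degree $0,1,i-2,$ or $i-1$ in $G{:}\{v_1,\ldots,v_i\}$; call these four cases \emph{isolated}, \emph{leaf}, \emph{near-dominating}, and \emph{dominating}. The key observation is that the near-dominating and dominating vertices, taken in order, have a strong cohesion property: if $v_i$ is dominating in $G{:}\{v_1,\ldots,v_i\}$ it is adjacent to all of $v_1,\ldots,v_{i-1}$, and if $v_i$ is near-dominating it is adjacent to all but one of them. So among the ``high-degree'' vertices we can greedily extract a large clique, while among the ``low-degree'' vertices (isolated and leaf) we can greedily extract a large stable set.

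First I would partition $V(G)$ into $H$ = the set of indices $i$ where $v_i$ is near-dominating or dominating, and $L$ = the set where $v_i$ is isolated or a leaf; so $|H|+|L| = n$ and one of them has size at least $n/2$. Consider the case $|L|\geq n/2$. Process the vertices of $L$ in increasing order and build a stable set greedily: when we reach $v_i\in L$, it has at most one neighbor among $v_1,\ldots,v_{i-1}$, hence at most one neighbor among the previously chosen vertices, so we can add it to the stable set unless that one neighbor is already chosen. A standard greedy/charging argument then gives a stable set of size at least $|L|/2 \geq n/4$: each vertex we reject is ``blocked'' by a distinct earlier chosen vertex (its unique back-neighbor), so the number of rejected vertices is at most the number of chosen ones. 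Symmetrically, when $|H|\geq n/2$, process $H$ in increasing order and build a clique: when we reach $v_i\in H$, it is non-adjacent to at most one of $v_1,\ldots,v_{i-1}$, hence misses at most one previously chosen vertex, and the same greedy argument yields a clique of size at least $|H|/2\geq n/4$.

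The main technical point — and the only place needing care — is the greedy charging argument, and in particular making sure the ``blocking'' map is injective. For the stable-set case: when $v_i$ is rejected, it has a unique back-neighbor $v_j$ ($j<i$) that lies in the current stable set; map $v_i \mapsto v_j$. If two rejected vertices $v_i, v_{i'}$ (say $i<i'$) both mapped to the same chosen $v_j$, then $v_j$ would be the unique $L$-back-neighbor of each, which is fine in itself — so instead I would run the greedy differently: process in \emph{decreasing} order of index, or equivalently observe that in increasing order the chosen vertices form an induced subforest of bounded structure. Cleanest is: in increasing order, a rejected $v_i$ has its unique back-neighbor $v_j$ already chosen, and we charge $v_i$ to $v_j$; a chosen vertex $v_j$ can be charged by at most one later vertex because once $v_j$ is used to reject some $v_i$, any later $v_{i'}\in L$ with back-neighbor $v_j$ — but $v_{i'}$'s back-neighbor is among $v_1,\ldots,v_{i'-1}$ and could still be $v_j$; so this naive bound fails. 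The fix is to note that each $v_i\in L$ has a \emph{unique} neighbor among \emph{all} earlier vertices, so the edges within $G{:}L$ together with edges from $L$ to $H$ give each $L$-vertex total back-degree $\le 1$; thus $G{:}L$ is a forest (in fact a linear forest of back-edges), and a forest on $m$ vertices has an independent set of size $\ge m/2$. That gives the bound cleanly, and dually $G{:}H$ has complement a forest, so $H$ contains a clique of size $\ge |H|/2$. I expect formalizing this forest structure to be the crux; everything else is bookkeeping.
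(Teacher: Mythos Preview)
Your proposal is correct and, after the unnecessary greedy detour, lands on exactly the paper's argument: partition the vertices by type into low $L$ and high $H$, observe that $G{:}L$ is a forest (each $v_i\in L$ has at most one back-neighbor in the MT-ordering, so no cycle can occur), hence bipartite, hence contains a stable set of size $\geq |L|/2$; and the $H$ case is handled by complementation. The paper skips the failed greedy attempt and goes straight to the forest observation, but the substance is identical.
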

\begin{proof}
Let $G$ be a mock threshold graph on $n$ vertices. Consider one of its MT-orderings. Color a vertex red if it is of dominating or near-dominating type, and blue otherwise. By the pigeonhole principle there must be at least $\frac{n}{2}$  blue vertices (passing to the complement, if necessary). Call the set of blue vertices $B$. Then $G{:}B$ is a forest, which is bipartite and hence has a stable set of size at least $\frac{1}{2}|B| \geq \frac{1}{2} \cdot \frac{n}{2} = \frac{n}{4}$.   
\end{proof}
 
For comparison, a threshold graph has a clique or stable set of size at least $\frac{n}{2}$. 

\end{subsection}

%%%%%%%%%%%
\begin{subsection}
{Well-quasi-ordering}\

A quasi-order is a pair $(Q, \leq)$, where $Q$ is a set and $\leq$ is a reflexive and transitive relation on $Q$. An infinite sequence $q_1,q_2, \ldots $ in $(Q, \leq)$ is \textit{good} if there exist $i < j$ such that $q_i \leq q_j$. A quasi-order $(Q, \leq)$ is a well-quasi-order (WQO) if every infinite sequence is good.  Some graph classes are well-quasi-ordered, others are not; and it can depend upon the chosen ordering.  
Threshold graphs are well-quasi-ordered under both the induced subgraph relation and the subgraph relation, but mock threshold graphs cannot be added to the list of well-quasi-ordered graph classes because they include trees.

\begin{proposition}
Mock threshold graphs are not well-quasi-ordered under either the subgraph relation or the induced subgraph relation.  
\end{proposition}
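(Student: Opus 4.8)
The plan is to exhibit, for each of the two relations, an infinite antichain of mock threshold graphs, i.e.\ an infinite family no member of which embeds into another (as subgraph, resp.\ induced subgraph). Since every forest is mock threshold (Proposition \ref{FORESTSAREMOCKTHRESHOLD}), it suffices to find an infinite antichain consisting of trees. For the subgraph relation, a standard choice works: take the family of cycles $\{C_n : n \geq 5\}$ if one wants graphs that are \emph{not} forests—but $C_n$ is not mock threshold (Proposition \ref{HOLESANDANTIHOLES}), so cycles are unavailable. Instead I would stay within forests. The classical example is the family of ``double brooms'' or subdivided stars: let $T_k$ be the tree obtained from a path on $k$ edges by attaching, at each of its two endpoints, two extra pendant edges (so each endpoint of the path has degree $3$). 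These are sometimes called $H$-graphs or spiders; each $T_k$ is a tree, hence mock threshold.

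First I would verify the $T_k$ are mock threshold: this is immediate from Proposition \ref{FORESTSAREMOCKTHRESHOLD}. Next I would show $\{T_k : k \geq 1\}$ is an antichain under the subgraph relation. The key invariant is that $T_k$ has exactly two vertices of degree $3$, both of which lie on a unique path between them of length exactly $k$, and all other vertices have degree $1$ or $2$. If $T_k$ were a subgraph of $T_m$, then since a subgraph of a tree on the same or fewer vertices that contains all of $T_k$'s edges forces the two degree-$3$ vertices of $T_k$ to map to vertices of degree $\geq 3$ in $T_m$, they must map to the two degree-$3$ vertices of $T_m$; but then the path of length $k$ between them in $T_k$ must embed into $T_m$, whose corresponding path has length $m$, forcing $k \leq m$, and a symmetric count of pendant edges plus vertex count forces $k = m$. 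Hence distinct $T_k$ are incomparable, giving the subgraph statement. For the induced subgraph relation the same family works, since being an induced subgraph is more restrictive than being a subgraph: an induced-subgraph antichain is in particular a subgraph antichain, and conversely here the embeddings are essentially forced, so the same argument shows $T_k$ is not an induced subgraph of $T_m$ for $k \neq m$ (indeed for $k < m$, any induced copy of $T_k$ would have to use the long path, but the internal path vertices of $T_k$ have degree $2$ while their images in $T_m$ would retain degree $2$ only if they avoid the two branch vertices, again forcing $k = m$).

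I expect the main obstacle to be pinning down the embedding argument cleanly: one must be careful that ``$T_k$ is a subgraph of $T_m$'' with $k > m$ is genuinely impossible even allowing $T_m$ to have more vertices, since a priori $T_k$ could sit inside $T_m$ in some twisted way. The safest route is to argue via the degree sequence and the distance between the two branch vertices, both of which are preserved well enough under subgraph embeddings of trees (a subgraph embedding cannot decrease the distance between two vertices, and cannot decrease degrees along a path that must be used). Alternatively, and perhaps more slickly, one can use caterpillars: let $S_k$ be the path $P_k$ with one extra leaf attached to every internal vertex; these are trees, mutually incomparable under both relations because the number of internal-path vertices of degree $3$ equals $k-2$ and is an invariant, and any embedding of $S_k$ into $S_m$ forces the spine to embed into the spine. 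Either family suffices; I would present whichever makes the incomparability bookkeeping shortest, and then conclude that $\MT$ contains an infinite antichain under each relation, so it is not well-quasi-ordered.
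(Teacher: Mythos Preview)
Your proof is correct and uses exactly the family the paper uses: the paper takes $T_i$ to be a path of length $i$ with two pendant edges attached at each end, notes these are trees (hence mock threshold), and asserts they form a bad sequence under the subgraph relation. One small slip worth fixing: the implication between the two orderings goes the other way than you state---since every induced subgraph is a subgraph, a \emph{subgraph} antichain is automatically an \emph{induced-subgraph} antichain, so once you have established the subgraph case the induced-subgraph case follows for free and your extra argument there is unnecessary.
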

\begin{proof}
Trees are not well-quasi-ordered under either relation. 
Take a path of length $i$ and attach two pendant edges at each of its ends. Call this tree $T_i$. Then $T_1,T_2, \ldots$ is a sequence that is not good since no $T_i$ is a subgraph of $T_j$ for  $i \not = j$. We are done by Proposition \ref{FORESTSAREMOCKTHRESHOLD} and the fact that if $(Q, \leq)$ is a WQO, then any subset of $Q$ is a WQO with respect to $\leq$.  
\end{proof}

\end{subsection}

%%%%%%%%%%%%%%%%%%%%%%%%
%%%%%%%%%%%%%%%%%%%%%%%%
\section{Algorithmic aspects}\label{algorithm}
 
%%%%%%%%%%%
\subsection{Recognition algorithm}\

There is an easy algorithm for recognizing both mock threshold graphs and non-mock threshold graphs. 
  
\bigskip
\noindent 
\emph{Input}: A graph $G$. \\
\emph{Algorithm}: Check whether there is a removable vertex. If not, declare that the graph is not mock threshold. If there is, delete that vertex and repeat the procedure.  
If all vertices are removed, declare that the graph is mock threshold.
\bigskip
  
The algorithm works because of Lemmas \ref{MINDEGREEMAXDEGREE} and \ref{REMOVABLE}.

%%%%%%%%%%%
\subsection{Chromatic and clique numbers}\

Determining whether a graph has chromatic number $3$ is NP-complete.  Since a mock threshold graph is perfect, its chromatic number and clique number coincide. The special structure of mock threshold graphs makes it possible to determine these numbers in polynomial time.
\bigskip

\noindent
\emph{Algorithm to compute the clique number of a mock threshold graph.}

Step 1: Determine an MT-ordering for the given graph $G$. 

Step 2: We look at vertices one-by-one, starting with the last vertex in the MT-ordering. If the scanned vertex is dominating or almost dominating, delete the vertex and its non-neighbor (if there is one) and increase the clique number by $1$. If the scanned vertex is pendant or isolated, just delete it. 
\bigskip

Since mock threshold graphs are weakly chordal, any efficient algorithm that applies to weakly chordal graphs also works for mock threshold graphs.

%%%%%%%%%%%
\subsection{Bandwidth}\

Despite the ease of the recognition and chromatic/clique number problems, there is a difference in complexity between threshold and mock threshold graphs.  
The bandwidth problem illustrates it.  Bandwidth is linear-time solvable (Yan, Chen, and Chang \cite{YCC}) for quasi-threshold graphs, which include threshold graphs, but NP-complete for mock threshold graphs since it is NP-complete for trees \cite{GGJK}.

%%%%%%%%%%%%%%%%%%%%%%%%
%%%%%%%%%%%%%%%%%%%%%%%%
\section{Open problems}

We mentioned the problem of characterizing even mock threshold graphs.  Here are more open problems that are worthwhile but may be difficult.

%%%%%%%%%%%
\subsection{Degree sequences}\
 
A mock threshold graph and a non-mock threshold  graph can have the same degree sequence. We give an example. Let $G_1$ be the graph with two components: one being a $5$-cycle with a chord, and the other $K_2$. Let $G_2$ be the graph consisting of a $5$-cycle together with two pendant edges at two adjacent vertices on the cycle. The degree sequence of both $G_1$ and $G_2$ is $(3,3,2,2,2,1,1)$, but $G_1$ is mock threshold whereas $G_2$ is not. 
This is in stark contrast with threshold graphs, where the graph is determined by the degree sequence. 

\begin{problem}\label{MTsequences}
{\rm(a)  Characterize graphic sequences all of whose realizations are mock threshold.}

{\rm(b)  Characterize graphic sequences none of whose realizations are mock threshold.}
\end{problem}

Amongst the solutions to (b) will be the graphic sequences in which all degrees lie between $2$ and $|V(G)|-3$; but those are not all.

%%%%%%%%%%%
 \subsection{Splitness}\
 
 Let $\SplitMT$ denote the set of all graphs that are both split and mock threshold.  
 \begin{problem}\label{SPLITPROBLEM}
 Characterize graphs that are both split and mock threshold. More specifically, determine  $\Forb(\SplitMT)$.
 \end{problem}

%%%%%%%%%%%
\subsection{Chromatic and Tutte polynomials}\

The chromatic polynomial of a threshold graph is easy to compute because the graph is chordal.  Mock threshold graphs, however, are not chordal so the chromatic polynomial is not readily calculated.

\begin{problem}
Can we say anything about the chromatic or Tutte polynomial of a mock threshold graph? 
\end{problem}

We do not know whether anything is known about the Tutte polynomial of even a threshold graph.  

%%%%%%%%%%%
\subsection{Hamiltonicity}\

Harary and Peled \cite{HP} characterized Hamiltonian threshold graphs.   

\begin{problem}\label{HAMPROBLEM}
Give  a similar characterization for Hamiltonian mock threshold graphs. 
\end{problem}

%%%%%%%%%%%
\subsection
{Further generalization}\

Let $k$ be a non-negative integer. Let $\mathcal{G}_k$ denote the class of graphs that can be constructed from $K_1$ by repeatedly adding a vertex with at most $k$ neighbors or at most $k$ non-neighbors; call these graphs \emph{$k$-mock threshold}.  Then we have a nested sequence: $\mathcal{G}_0 \subset \mathcal{G}_1 \subset \mathcal{G}_2 \subset \cdots .$  Each containment is proper; for example, a $k$-regular graph on $2k+2$ vertices is in $\mathcal{G}_k$ but not $\mathcal{G}_{k-1}$.  Clearly, $\bigcup \mathcal{G}_k$ is the set of all graphs.  Each $\mathcal{G}_k$ is closed under taking complements and induced subgraphs (though certainly not subgraphs).

A general goal would be to understand $\mathcal{G}_k$ and a specific natural problem will be to determine $\Forb(\mathcal{G}_k)$.  The case $k=0$ is that of threshold graphs, which is known, and the case $k=1$ is Theorem \ref{FORBIDDEN}.  $\Forb(\mathcal{G}_k)$ consists of graphs $G$ such that $G \notin \mathcal{G}_k$ and $G - v \in \mathcal{G}_k$ for every $v \in V(G)$.  It contains every connected $(k+1)$-regular graph on at least $2k+3$ vertices.  Presumably there are also sporadic members, as with $\Forb(\mathcal{G}_1)$; an open question is whether their number is finite.

We go out of the realm of perfect graphs when $k \geq 2$.  In fact, all the minimal imperfect graphs, which are $C_n$ and $\overline C_n$ for odd $n\geq5$, are 2-mock threshold.

%%%%%%%%%%%%%%%%%%%%%%%%
%%%%%%%%%%%%%%%%%%%%%%%%
\begin{acknowledgements}
We are grateful to Jeff Nye for helping us with the Mathematica program for determining the minimal non-MTGs with at most $11$ vertices, which was crucial for discovering that the largest sporadic forbidden graphs have no more than $10$ vertices.
\end{acknowledgements}

%%%%%%%%%%%%%%%%%%%%%%%%
%%%%%%%%%%%%%%%%%%%%%%%%

%%%%%%%%%%%%%%%%%%%%%%%%
%%%%%%%%%%%%%%%%%%%%%%%%
\begin{figure}[hb]
\includegraphics
[scale=.575]
%[width=100mm, height=100mm]
{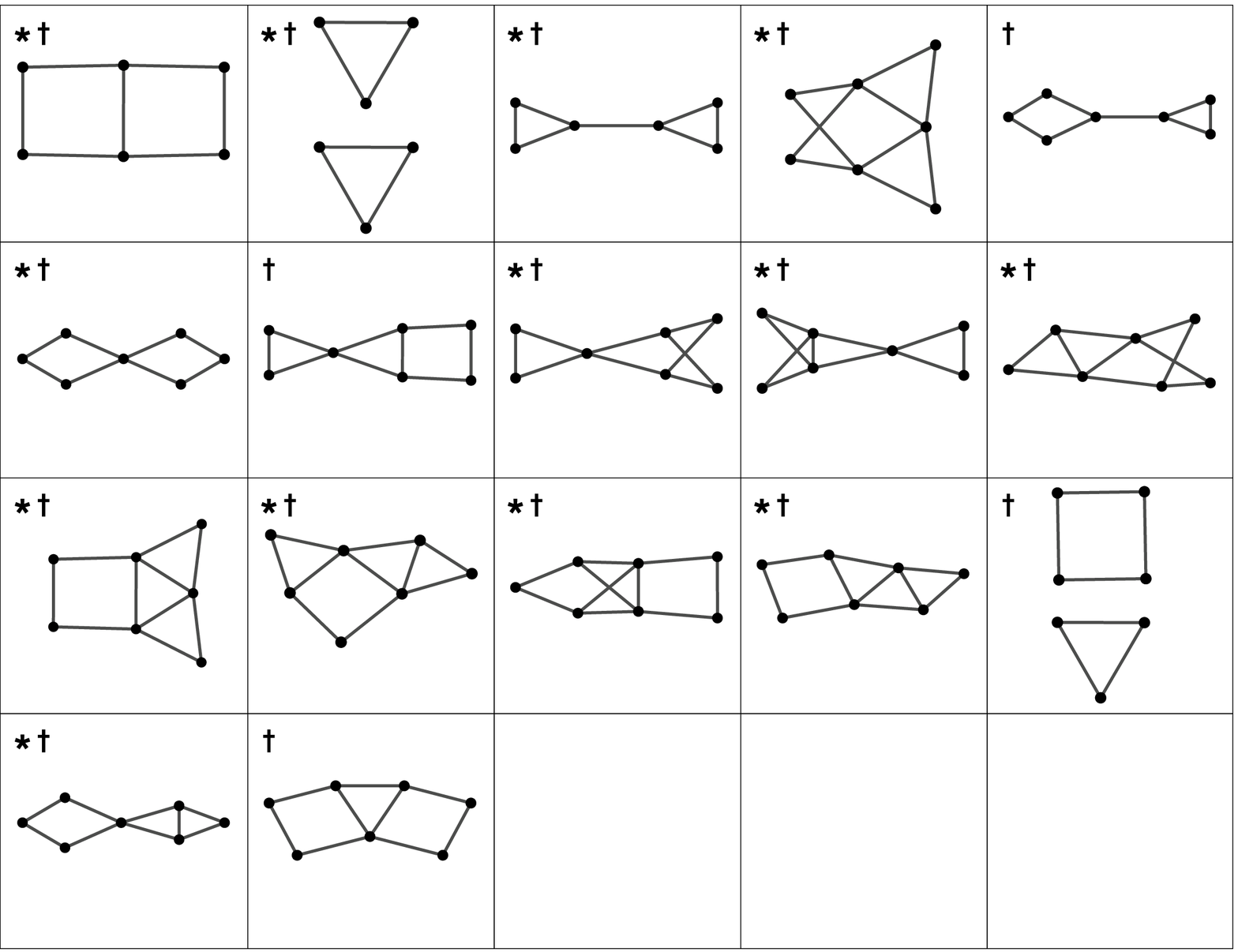}
\caption{Members of $\Forb(\MT)$ with at most $7$ vertices, except cycles and cycle complements. There are $34$ of them: the $17$ shown and their complements. \newline $*$ marks a graph that is minimally non--mock-threshold under contraction. \newline \dag\ denotes a graph whose complement is minimally non--mock-threshold under contraction.}
\label{ForbiddenMTGraphs7Vertices}
\end{figure}

\begin{figure}
\includegraphics
[scale=.5]
%[width=100mm, height=150mm]
{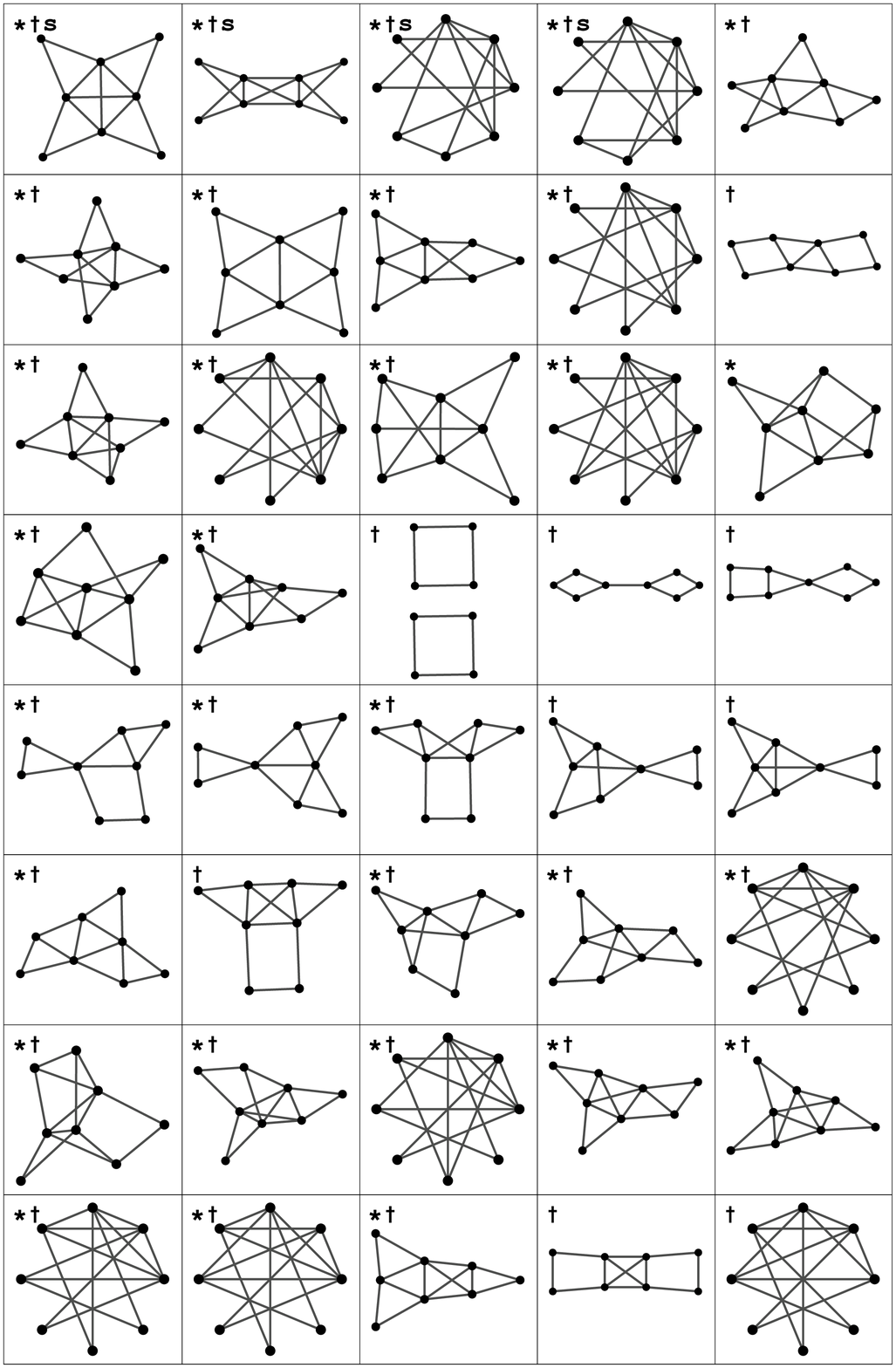}
\caption{Members of $\Forb(\MT)$ with $8$ vertices, other than $C_8$ and $\overline C_8$; the figure shows one member of each complementary pair. \newline s denotes a self-complementary graph (four in the top row; there are five in all, including $C_5$).}
\label{ForbiddenMTGraphs8Vertices}
\end{figure}

\begin{figure}
\includegraphics
[scale=.5]
%[width=100mm, height=180mm]
{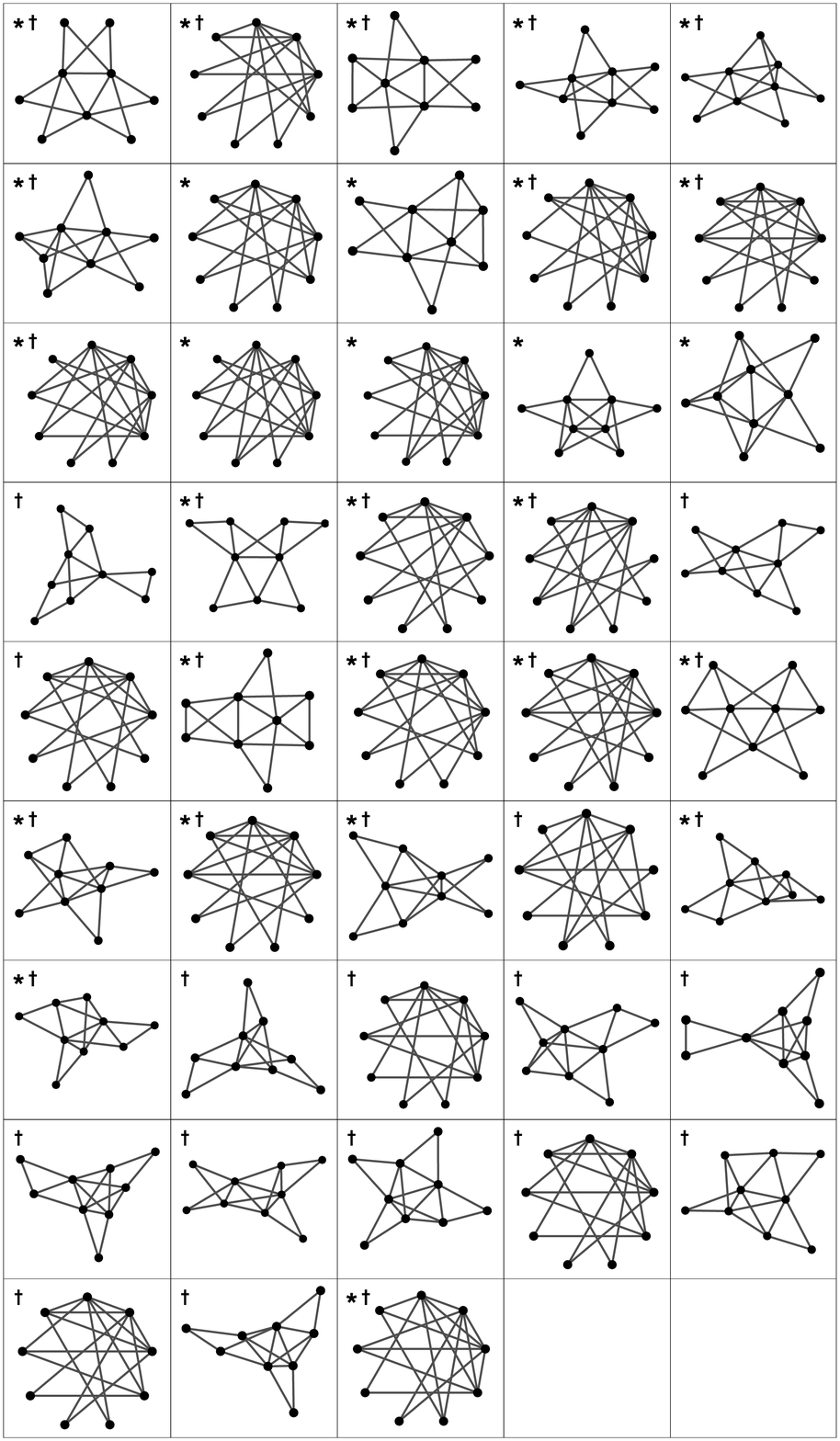}
\caption{One member of each complementary pair of graphs in $\Forb(\MT)$ with $9$ vertices, other than $C_9$ and $\overline C_9$ (first half).}
\label{ForbiddenMTGraphs9VerticesPart1}
\end{figure}

\begin{figure}
\includegraphics
[scale=.5]
%[width=100mm, height=180mm]
{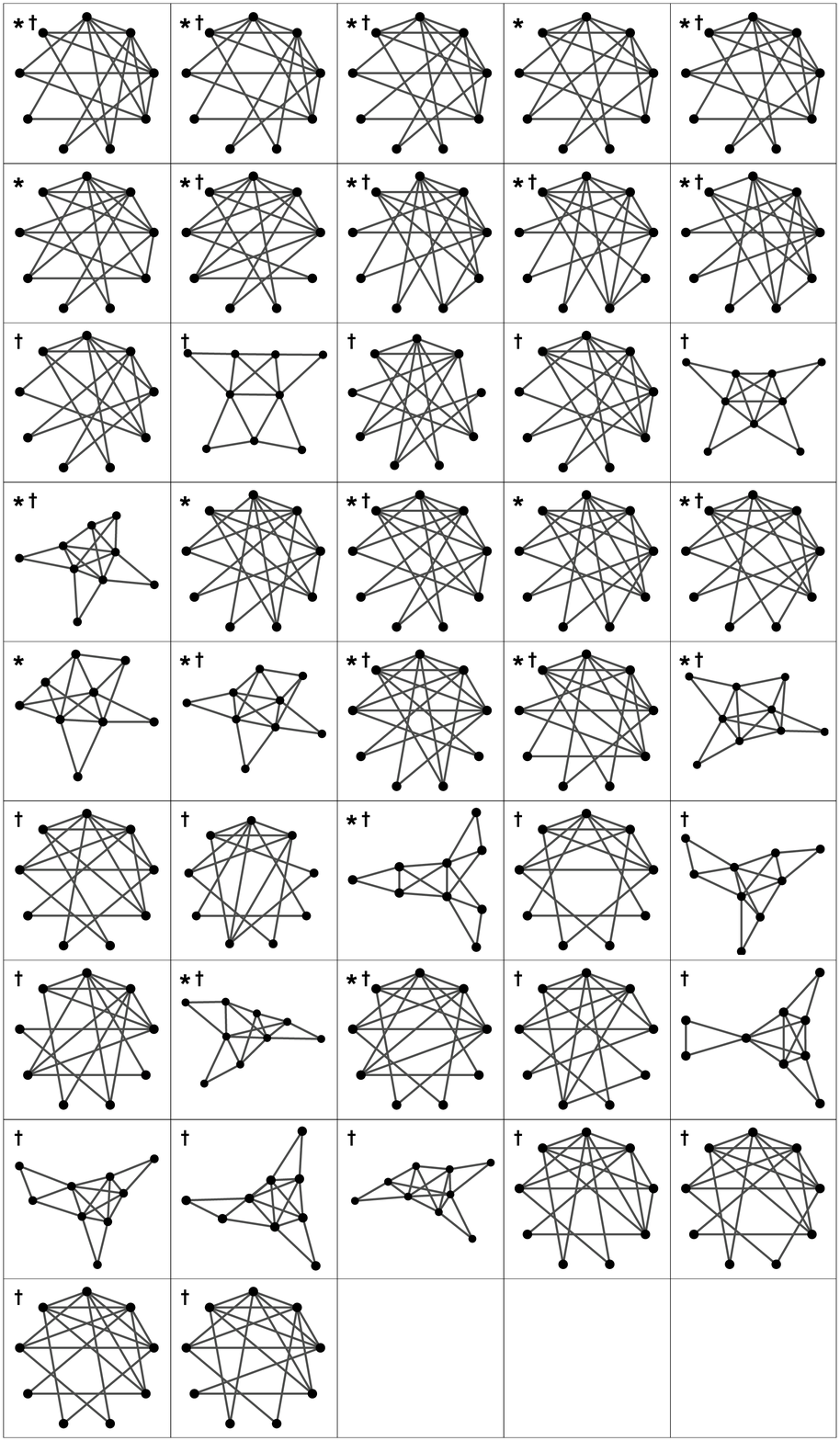}
\caption{One member of each complementary pair of graphs in $\Forb(\MT)$ with $9$ vertices (second half).}
\label{ForbiddenMTGraphs9VerticesPart2}
\end{figure}

\begin{figure}
\includegraphics
[scale=.5]
%[width=90mm, height=80mm]
{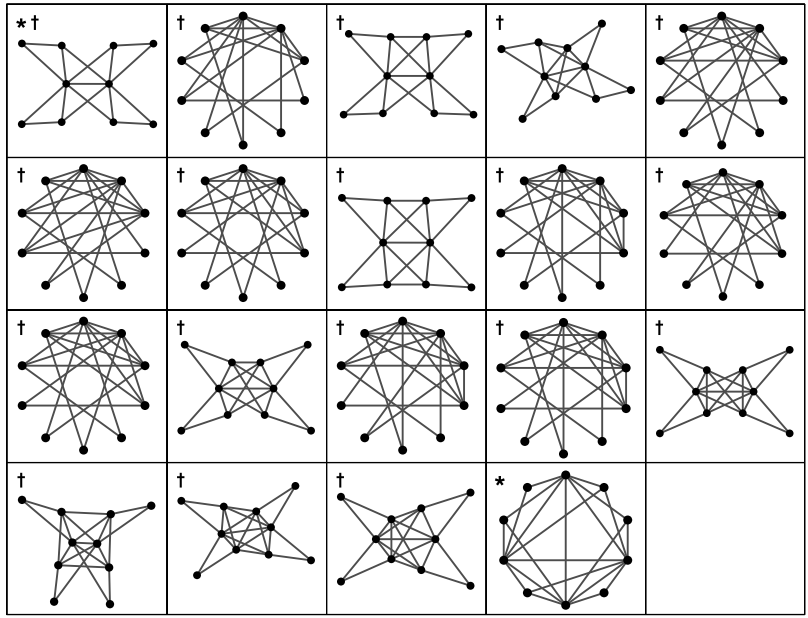}
\caption{Members of $\Forb(\MT)$ with $10$ vertices, other than $C_{10}$ and $\overline C_{10}$. There are $38$ of them: the $19$ shown and their complements.}
\label{ForbiddenMTGraphs10Vertices}
\end{figure}

\end{document}